\documentclass{article}

\usepackage[margin=1in]{geometry}
\usepackage[T1]{fontenc}
\usepackage[utf8]{inputenc}
\usepackage{amsmath,amssymb,amsthm,mathtools,mathrsfs}
\usepackage{microtype}
\usepackage{graphicx}
\usepackage{natbib}
\usepackage{hyperref}
\usepackage{url}
\usepackage{booktabs}
\usepackage{xcolor}
\usepackage{placeins}
\usepackage{longtable,array}

\newtheorem{theorem}{Theorem}[section]
\newtheorem{proposition}[theorem]{Proposition}
\newtheorem{corollary}[theorem]{Corollary}
\newtheorem{definition}[theorem]{Definition}
\newtheorem{remark}[theorem]{Remark}

\newenvironment{proofsketch}{\begin{proof}[Proof sketch]}{\end{proof}}

\newcommand{\R}{\mathbb{R}}
\newcommand{\Sd}{\mathbb{S}^{d-1}}
\newcommand{\cP}{\mathcal{P}}
\newcommand{\cM}{\mathcal{M}}

\newcommand{\mF}{\mathrm{F}}
\newcommand{\mN}{\mathrm{N}}
\newcommand{\mJ}{\mathrm{J}}
\newcommand{\mI}{\mathrm{I}}
\newcommand{\MF}{\mathcal{F}}
\newcommand{\MN}{\mathcal{N}}
\newcommand{\MJ}{\mathcal{J}}
\newcommand{\Wtwo}{\mathsf{W}_2}
\newcommand{\Wg}{\mathsf{W}_{\gamma}}
\newcommand{\Wgbb}{\mathsf{W}_{\gamma}^{\mathrm{BB}}}
\newcommand{\WgM}{\mathsf{W}_{\gamma}^{\mathrm{M}}}
\newcommand{\UWg}{\mathsf{UW}_{\gamma}}
\newcommand{\WQ}[1]{\mathsf{W}^{#1}}
\newcommand{\BWg}{\mathsf{B}_{\gamma}}
\newcommand{\tr}{\operatorname{tr}}
\newcommand{\divv}{\operatorname{div}}
\newcommand{\divS}{\operatorname{div}_{\Sd}}
\newcommand{\nablaS}{\nabla_{\Sd}}
\newcommand{\Id}{\mathrm{Id}}
\newcommand{\diag}{\operatorname{diag}}
\newcommand{\norm}[1]{\left\lVert #1 \right\rVert}
\newcommand{\abs}[1]{\left\lvert #1 \right\rvert}
\newcommand{\ip}[2]{\left\langle #1,#2 \right\rangle}
\newcommand{\cPol}{\mathcal P_\gamma}
\newcommand{\cK}{\mathcal K_\gamma}
\newcommand{\cA}{\mathcal A_\gamma}
\newcommand{\Sppd}{\mathbb S_{++}^d}

\title{Muon Dynamics as a Spectral Wasserstein Flow}
\author{Gabriel Peyr\'e\\CNRS and ENS, PSL Universit\'e\\\texttt{gabriel.peyre@ens.fr}}
\date{\today}

\begin{document}

\maketitle

\begin{abstract}
Gradient normalization stabilizes deep-learning optimization, and spectral normalizations are especially natural for matrix-shaped parameter blocks; Muon is the motivating example. We study an idealized deterministic, continuous-time, vanishing-momentum version of this idea in the mean-field regime, where wide models are represented by probability measures on parameter space. Starting from normalized matrix flows, we introduce Spectral Wasserstein distances indexed by norms $\gamma$ on positive semidefinite matrices: the trace norm gives classical $\Wtwo$, the operator norm gives the Muon geometry, and Schatten norms interpolate between them. We develop the static Kantorovich formulation, a max-min robust-cost representation, Gaussian reductions extending the Bures formula, and for monotone norms, prove equivalence with a Benamou--Brenier formulation. This yields a gradient-flow interpretation of the mean-field normalized training dynamics. We illustrate these findings by numerical experiments on MMD flows, Gaussian reductions, two-layer ReLU models, and shallow attention.
\end{abstract}

\section{Introduction}
\label{sec:intro}

Spectrally normalized optimizers such as Muon replace the Euclidean geometry of gradient descent by a matrix-aware geometry adapted to block-shaped parameters. This paper presents a mathematical framework for these optimizers in the setting of very wide, possibly infinitely wide, layers by introducing a Spectral Wasserstein geometry that models their dynamics as mean-field gradient flows.

\paragraph{Normalized gradient descent and mean field dynamics.}
On the optimization side, a growing literature studies normalized first-order methods. The recent framework of \citet{PethickXieAntonakopoulosEtAl2025} is particularly relevant because it treats norm-constrained linear minimization oracles as a general language for normalized gradient methods and includes spectral normalizations as special cases. Earlier works such as \citet{CutkoskyMehta2020} and \citet{MurraySwensonKar2019} show how normalized gradient methods change the optimization dynamics even in nonconvex settings. For deep architectures, matrix-aware normalizations are especially natural: Shampoo is an early influential example of tensor/matrix-aware preconditioning \citep{GuptaKorenSinger2018}, and Muon has become a leading example of spectral normalization in large-scale training \citep{KellerJordan2024,LiuSuYaoEtAl2025}. This connects naturally with the mean-field description of wide neural networks through probability measures on parameter space, which underlies the landscape analysis of two-layer networks by \citet{MeiMontanariNguyen2018}, the optimal-transport convergence analysis of over-parameterized models by \citet{ChizatBach2018}, and the metric-gradient-flow viewpoint developed in \citet{AmbrosioGigliSavare2008}. Our work keeps this mean-field perspective but changes the underlying metric from Euclidean Wasserstein geometry to matrix-aware Spectral Wasserstein geometries.

\paragraph{Generalized optimal distances.}
On the transport side, our work is closest in spirit to generalized coupling costs and weak transport, as developed for instance by \citet{GozlanRobertoSamsonTetali2017}, \citet{BackhoffBeiglbockPammer2019}, and \citet{BackhoffPammer2022}. It is also related to covariance-dependent transport costs \citep{BurgerErbarHoffmannMatthesSchlichting2023}, to matrix-valued optimal transport \citep{NingGeorgiouTannenbaum2015,ChenGeorgiouTannenbaum2018}, and to the Bures--Wasserstein geometry of covariance matrices \citep{BhatiaJainLim2019}. Another useful perspective is to robustify the Wasserstein distance by optimizing over the cost itself. 
The closest antecedent is the subspace robust Wasserstein distance of \citet{PatyCuturi2019}, which corresponds to the special case of our spectral Wasserstein distance $\Wg$ obtained when $\gamma$ is a Ky Fan norm~\citep{fan1951maximum}.
Maximizing over costs is also connected to metric learning in Wasserstein discriminant analysis \citep{FlamaryCuturiCourtyRakotomamonjy2018}, ground metric learning \citep{CuturiAvis2014}, and congestion models for transportation networks \citep{CarlierJimenezSantambrogio2008}. The opposite direction, minimizing over costs, appears for instance, in \citet{SebbouhCuturiPeyre2023}; this leads to a concave minimization problem useful for Gromov--Wasserstein-type structure. By contrast, our robustification is a concave maximization over admissible quadratic costs, leading to a convex static problem.

\paragraph{Contributions.}
We first derive spectral normalized matrix flows and their mean-field continuity equations in Section~\ref{sec:flows}. We then introduce the static Spectral Wasserstein cost in Section~\ref{sec:static}, prove its comparison with $\Wtwo$ in Proposition~\ref{prop:bounds}, its max-min cost-robust representation in Theorem~\ref{thm:minmax}, and its Gaussian reduction in Proposition~\ref{thm:gaussian}. For monotone norms, Section~\ref{sec:dynamic} proves the equivalence between the static and Benamou--Brenier formulations in Theorem~\ref{thm:staticdynamic}, yielding the metric property in Corollary~\ref{cor:metric} and explicit geodesics in Corollary~\ref{cor:geodesic}. Section~\ref{sec:wasserstein-flows} identifies the normalized PDE as the corresponding metric gradient flow and discusses MLP, shallow-attention, MMD, and linear networks examples. The code used to reproduce the numerical experiments is available at \url{https://github.com/gpeyre/muon-dynamics}. \textbf{What this paper shows and what it does not.} The goal of this paper is not to decipher why Muon-type algorithms sometimes outperform vanilla gradient descent for specific architectures, in particular, attention layers. It is to lay out the mathematical formalism needed to explore these questions in the setting of very wide networks. This may be a more modest goal, but it also opens the door to many theoretical questions beyond deep-network training, centered on this new class of transportation geometries.

\section{Spectral Gradient Flow and Mean Field Dynamics}
\label{sec:flows}

\paragraph{Spectral gradient flows on matrices.}

This section introduces the normalized dynamics studied in the paper, whose
geometry is governed by a matrix gauge $\gamma$. The ``muon-type'' descent
corresponds to the case where $\gamma$ is the operator norm.
We also assume monotonicity, which is used in the static--dynamic equivalence of Theorem~\ref{thm:staticdynamic} and is crucial
for the spectral Wasserstein gradient-flow interpretation in Section~\ref{sec:wasserstein-flows} (but the other results do not require this assumption).

\begin{definition}[Monotone gauge $\gamma$ and norm $\mN(V)$]
\label{def:monotone-norm}
Throughout the paper, $\gamma$ denotes the restriction to $\mathbb S_+^d$ of a
norm on $\mathbb S^d$. We assume that this restriction is monotone: whenever
$0\preceq S\preceq S'$, one has $\gamma(S)\le\gamma(S')$. 
The matrix norm induced by $\gamma$ is $\mN(V)\coloneqq \gamma(V^\top V)^{1/2}$; for $\gamma=\gamma_p$, one has $\mN(V)=\norm{V}_{S_{2p}}$. 
As shown in Proposition~\ref{prop:N-norm}, $\mN(V)$ is indeed a norm when $\gamma$ is monotone.
\end{definition}

The canonical examples used throughout this article are the Schatten gauges: if $\lambda(S)$ denotes the vector of eigenvalues of $S\succeq 0$, then $\gamma_p(S)\coloneqq \|S\|_{S_p}=\|\lambda(S)\|_{\ell_p}$, $p\in[1,\infty]$.
Other examples include the Ky Fan norms~\citep{fan1951maximum}, $\gamma(S)\coloneqq \sum_{i=1}^k \lambda_i(S)$, the sum of the $k$ largest eigenvalues, which interpolate between the operator norm $(k=1)$ and the trace norm $(k=d)$, which is exactly the setting considered by \citet{PatyCuturi2019} for subspace robust Wasserstein distance.

\begin{definition}[Linear Minimization Oracle (LMO)]
\label{def:matrix-lmo}
For  $G\in\R^{n\times d}$, the matrix LMO associated with $\gamma$ is any minimizer
$\mJ_\gamma(G)\in\operatorname*{argmin}_{V}
\{\langle G,V\rangle_F+\frac12\mN(V)^2\}$.
\end{definition}

Let $\mF:\R^{n\times d}\to\R$ be a smooth objective.
Following \citet{PethickXieAntonakopoulosEtAl2025}, it yields both the discrete step and the ODE flow
\begin{equation}\label{eq:ode-basic}
	X_{k+1}=X_k+\tau\mJ_\gamma(\nabla\mF(X_k))
	\quad\text{and}\quad
	\dot X_t=\mJ_\gamma(\nabla\mF(X_t)).
\end{equation}
For $p=1$, $\mJ_{\gamma_1}(G)=-G$, so one recovers the usual gradient flow; for $p=\infty$, $\mJ_{\gamma_\infty}$ is the trace-scaled polar projection of $G$ onto the orthogonal factor, giving a simplified ``Muon'' ODE.  Explicit Schatten formulas are recalled in Appendices~\ref{app:matrix-recap}.

\paragraph{Mean-field lift and normalized continuity equation.}
Given particles $x_1,\dots,x_n\in\R^d$, write $X\coloneqq[x_1^\top;\ldots;x_n^\top]\in\R^{n\times d}$ and $\mu_X\coloneqq n^{-1}\sum_i\delta_{x_i}$. We consider permutation-invariant objectives lifted from measures, $\mF(X)\coloneqq n \MF(\mu_X)$; this is in particular the case for neural-network layers such as two-layer MLPs and attention layers, as exposed in Section~\ref{sec:wasserstein-flows}.

\begin{definition}[Measure-LMO]
\label{def:duality}
Let $\mu\in\cP_2(\R^d)$. For $v\in L^2(\mu;\R^d)$, define
$\MN_\mu(v)^2\coloneqq \gamma\!\left(\int v(x)v(x)^\top\,d\mu(x)\right)$.
For $g\in L^2(\mu;\R^d)$, the measure-LMO is any member of
$\MJ_\gamma^\mu(g) \in 
\operatorname*{argmin}_{v\in L^2(\mu;\R^d)}
\left\{ \int g(x)\cdot v(x)\,d\mu(x) +\frac12\MN_\mu(v)^2 \right\}$.
\end{definition}

For an empirical measure $\mu_X=n^{-1}\sum_i\delta_{x_i}$ transported by
particle velocities stacked into a matrix $V$, one has
$\MN_{\mu_X}(v)^2=n^{-1}\mN(V)^2$. If $G$ denotes the matrix with rows
$g(x_i)^\top$, then the rows of $\MJ_\gamma^{\mu_X}(g)$ coincide with those of
$\mJ_\gamma(G)$.
For a more general measure $\mu$, however, it is not a priori clear how to compute
$\MJ_\gamma^{\mu}(g)$. The following theorem shows how to do so through a
dual representation.

\begin{definition}[Representing set $\cK$]
\label{def:representing-set}
A representing set for $\gamma$ is a convex compact set
$\cK\subset \mathbb S_+^d$ such that, for every $S\succeq0$,
$    \gamma(S)=\max_{Q\in\cK}\tr(QS) $ .
%Equivalently, $\gamma$ is the support function of $\cK$ restricted to the positive semidefinite cone.
%
As recalled in Appendix~\ref{app:matrix-recap}, monotonicity of $\gamma$ on
$\mathbb S_+^d$ allows the representing set to be chosen inside
$\mathbb S_+^d$.
Remark~\ref{rem-kgamma-shatten} gives examples of such sets
$\cK$ for the Schatten norms $\gamma=\gamma_p$.
\end{definition}

\begin{theorem}[Structure theorem for the Measure-LMO]
\label{thm:selector-structure-paper}
% Let $\mu\in\cP_2(\R^d)$ and let $g\in L^2(\mu;\R^d)$ be nonzero. 
Define $\mathcal{S}_\mu(g)\coloneqq \int g(x)g(x)^\top\,d\mu(x)$.
Assume that 
$
Q_\mu^* \in \operatorname*{argmin}_{Q\in\cK\cap\Sppd}
\tr\!\bigl(Q^{-1}\mathcal{S}_\mu(g)\bigr)
$
admits a minimizer. Then
$
\MJ_\gamma^\mu(g)(x)=-(Q_\mu^*)^{-1}g(x)
$
is a valid Measure-LMO.  
\end{theorem}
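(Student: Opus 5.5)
Write $\Sigma_v\coloneqq\int v v^\top d\mu$ and $\mathcal S\coloneqq\mathcal S_\mu(g)$. The plan is a weak-duality argument. First I compute a lower bound for the objective $\Phi(v)\coloneqq\int g\cdot v\,d\mu+\frac12\gamma(\Sigma_v)$ that holds for every $v$. Then I check that the candidate $v^*=-(Q^*_\mu)^{-1}g$ attains it.

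\textbf{Step 1: lower bound.} Fix any $Q\in\cK\cap\Sppd$. Since $\Sigma_v\succeq0$ and $\cK$ is a representing set, $\gamma(\Sigma_v)\ge\tr(Q\Sigma_v)=\int v^\top Q v\,d\mu$. Pointwise minimization in $v$ of $g\cdot v+\frac12 v^\top Qv$ gives the minimum value $-\frac12 g^\top Q^{-1}g$. Hence
\[
\Phi(v)\;\ge\;-\tfrac12\tr(Q^{-1}\mathcal S)\qquad\text{for all } v\in L^2(\mu;\R^d),\ Q\in\cK\cap\Sppd .
\]
Taking $Q=Q^*_\mu$ gives the bound $\Phi(v)\ge -\frac12\tr((Q^*_\mu)^{-1}\mathcal S)$.

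\textbf{Step 2: attainment.} Set $Q=Q^*_\mu$ and $v^*=-Q^{-1}g$, which lies in $L^2(\mu)$ because $g$ does. Then $\int g\cdot v^*\,d\mu=-\tr(Q^{-1}\mathcal S)$ and $\Sigma_{v^*}=Q^{-1}\mathcal S Q^{-1}$. So $\Phi(v^*)=-\tr(Q^{-1}\mathcal S)+\frac12\gamma(Q^{-1}\mathcal SQ^{-1})$, and it suffices to prove
\[
\gamma(Q^{-1}\mathcal SQ^{-1})\le\tr(Q^{-1}\mathcal S),
\]
i.e.\ $\tr(P\,Q^{-1}\mathcal SQ^{-1})\le \tr(Q^{-1}\mathcal S)$ for every $P\in\cK$. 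This is where minimality of $Q$ enters. The map $f(Q)=\tr(Q^{-1}\mathcal S)$ is convex and differentiable on $\Sppd$, with derivative in direction $H$ equal to $-\tr(Q^{-1}HQ^{-1}\mathcal S)$.

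\textbf{Step 3: first-order condition (main obstacle).} For $P\in\cK$ and $t\in(0,1]$, the point $Q_t=Q+t(P-Q)$ lies in $\cK$ by convexity and is positive definite, since $(1-t)Q\succ0$ and $tP\succeq0$. So $Q_t\in\cK\cap\Sppd$ is feasible, and optimality gives $f(Q_t)\ge f(Q)$. Differentiating at $t=0^+$ yields
\[
-\tr\bigl(Q^{-1}(P-Q)Q^{-1}\mathcal S\bigr)\ge0,
\]
that is, $\tr(P\,Q^{-1}\mathcal SQ^{-1})\le\tr(Q\,Q^{-1}\mathcal SQ^{-1})=\tr(Q^{-1}\mathcal S)$. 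This is exactly the inequality required in Step 2. The delicate point is that $\cK\cap\Sppd$ is not closed, so one cannot appeal to a standard KKT statement directly. The segment argument avoids this because the interior of every segment from $Q$ into $\cK$ stays in $\Sppd$, which is the only fact needed.

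\textbf{Conclusion.} Combining the steps, $\Phi(v^*)\le -\frac12\tr((Q^*_\mu)^{-1}\mathcal S)$. By Step 1 this value is a lower bound for $\Phi$ over all of $L^2(\mu;\R^d)$. Hence $v^*$ minimizes $\Phi$ and is a valid Measure-LMO. As a byproduct, the optimal value equals $-\frac12\min_{Q}\tr(Q^{-1}\mathcal S)$.
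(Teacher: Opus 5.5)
Your proof is correct and follows the same duality route as the paper: fix $Q\in\cK\cap\Sppd$, minimize pointwise to get the lower bound $-\frac12\tr(Q^{-1}\mathcal S_\mu(g))$, then show that $v^*=-(Q_\mu^*)^{-1}g$ attains it. Your Step 3 segment argument supplies the first-order inequality $\gamma(Q^{-1}\mathcal S Q^{-1})\le\tr(Q^{-1}\mathcal S)$, which the paper only asserts through ``convex duality''. One small slip: $Q_t$ is positive definite only for $t\in(0,1)$, not at $t=1$, but you only use small $t$, so nothing is affected.
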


\begin{proofsketch}
The detailed proof is in Appendix~\ref{app:matrix-recap}.
Set $S\coloneqq\mathcal{S}_\mu(g)$. The Measure-LMO problem reduces pointwise
to minimizing $\int \ip{g(x)}{v(x)}\,d\mu(x)$ over the unit ball induced by the
dual action. Using the matrix representation from
Appendix~\ref{app:matrix-recap}, this constraint can be written through the
family of quadratic bounds indexed by $Q\in\cK\cap\Sppd$. For fixed $Q$, the
Cauchy--Schwarz inequality in the $Q$-metric gives
$\int \ip{g}{v}\,d\mu \ge
-\bigl(\int v^\top Qv\,d\mu\bigr)^{1/2}
\bigl(\tr(Q^{-1}S)\bigr)^{1/2}$,
with equality when $v$ is proportional to $-Q^{-1}g$.
Optimizing over admissible matrices therefore selects a minimizer
$Q_\mu^*\in\mathrm{argmin}_{Q\in\cK\cap\Sppd}\tr(Q^{-1}S)$. With this choice, the
equality case in Cauchy--Schwarz is realized by
$v(x)=-(Q_\mu^*)^{-1}g(x)$. The matrix optimality condition from
Appendix~\ref{app:matrix-recap} ensures that this vector field lies on the
appropriate unit boundary, so it attains the minimum in the Measure-LMO problem.
Hence $\MJ_\gamma^\mu(g)(x)=-(Q_\mu^*)^{-1}g(x)$ is a valid selector.
\end{proofsketch}

More generally, if the inverse-trace problem admits no minimizer in
$\mathbb S_{++}^d$, as may occur for $p=\infty$ when the optimal matrix lies
on the boundary of the positive semidefinite cone, a Measure-LMO is obtained
by taking the limit of the same expression along any minimizing sequence
in $\mathbb S_{++}^d$.

Following \citep{AmbrosioGigliSavare2008}, we define the Wasserstein gradient of $\mathcal{F}$ as the vector field $g_\mu(x)\coloneqq\nabla_x\delta\MF/\delta\mu(x)$ where the first variation $\delta\MF/\delta\mu$ is defined by $\frac{d}{d\varepsilon}\MF(\mu+\varepsilon\sigma)|_{\varepsilon=0}=\int (\delta\MF/\delta\mu)d\sigma$ for signed zero-mass perturbations. 
The normalized mean-field dynamics is then the PDE 
\begin{equation}
\label{eq:measure-spectral-flow}
\partial_t\mu_t+\divv\!\bigl(\mu_t \MJ_\gamma^{\mu_t}(g_{\mu_t})\bigr)=0.
\end{equation}
When $\gamma(S)=\tr(S)$, $\MJ_\gamma^\mu(g)=-g$ and this reduces to the classical $\Wtwo$ gradient-flow PDE. For discrete measures, one recovers the  ODE~\eqref{eq:ode-basic}. 

\begin{proposition}[Finite-width interpretation]
\label{cor:particle}
Assume the normalized continuity equation starting from $\mu_{X_0}$ preserves empirical measures and can be written as $\mu_t=n^{-1}\sum_i\delta_{x_i(t)}$, denote $X_t\coloneqq[x_1(t)^\top;\ldots;x_n(t)^\top]$. 
Then $X_t$ solves the ODE~\eqref{eq:ode-basic} if and only if   $\mu_{X_t}$ solves \eqref{eq:measure-spectral-flow} weakly. 
The proof is in Appendix~\ref{app:matrix-recap}.
\end{proposition}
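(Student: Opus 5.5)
The plan is to reduce both directions to a single identity between the particle velocities and the measure-LMO evaluated at the particles. The first ingredient is the link between the particle gradient and the Wasserstein gradient. Since $\mF(X)=n\MF(\mu_X)$, perturbing a single particle $x_i\mapsto x_i+\varepsilon h$ changes $\mu_X$ by $n^{-1}(\delta_{x_i+\varepsilon h}-\delta_{x_i})$, which is a signed zero-mass perturbation. The definition of the first variation then gives
\[
\partial_{x_i}\mF(X)=n\cdot n^{-1}\nabla_x\frac{\delta\MF}{\delta\mu}(x_i)=g_{\mu_X}(x_i).
\]
So the rows of $\nabla\mF(X)$ are exactly $g_{\mu_X}(x_i)^\top$. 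This step assumes enough regularity of $\delta\MF/\delta\mu$ to differentiate under the Dirac perturbation; I would state it as a standing smoothness assumption.

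The second ingredient is the identification of the LMOs, already noted after Definition~\ref{def:duality}. An element $v\in L^2(\mu_X;\R^d)$ is the same thing as a matrix $V$ with rows $v(x_i)^\top$ (for distinct particles). Under this correspondence,
\[
\int g\cdot v\,d\mu_X=n^{-1}\langle G,V\rangle_F
\quad\text{and}\quad
\MN_{\mu_X}(v)^2=n^{-1}\mN(V)^2,
\]
since $\int vv^\top d\mu_X=n^{-1}V^\top V$ and $\gamma$ is homogeneous. The measure-LMO objective is therefore $n^{-1}$ times the matrix-LMO objective, so the two argmins coincide. Hence, with a consistent selection, $\MJ_\gamma^{\mu_X}(g_{\mu_X})(x_i)$ equals the $i$-th row of $\mJ_\gamma(\nabla\mF(X))$.

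Next I test the continuity equation against $\varphi\in C_c^\infty$:
\[
\frac{d}{dt}\int\varphi\,d\mu_t=\frac1n\sum_i\nabla\varphi(x_i(t))\cdot\dot x_i(t).
\]
The weak form of \eqref{eq:measure-spectral-flow} instead requires this quantity to equal $\frac1n\sum_i\nabla\varphi(x_i)\cdot w_t(x_i)$, where $w_t$ is the measure-LMO field. If $X_t$ solves \eqref{eq:ode-basic}, then $\dot x_i=w_t(x_i)$ by the previous step, so the weak equation holds. For the converse, I would choose test functions $\varphi$ whose gradient is localized near a single particle $x_j$ and equals a prescribed constant vector $e$ there. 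This is possible whenever the particles are distinct, and gives $\dot x_j\cdot e=w_t(x_j)\cdot e$ for every $e$, hence the ODE.

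The main obstacle is this converse direction when particles collide, since then $L^2(\mu_X)$ only sees averaged velocities and the LMO need not be unique. I would handle it in two ways. First, I would interpret "solves" up to the choice of LMO selector. Second, on the collision set I would use that the minimizer can be taken equal on coincident particles: by convexity of the objective, averaging the rows of coincident particles does not increase it. Absolute continuity of $t\mapsto x_i(t)$ is needed to differentiate the pairing, and I assume it as part of the hypothesis that the flow preserves empirical measures.
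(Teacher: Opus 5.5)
Your proposal is correct and follows the same route as the paper's proof. Both reduce the continuity equation for empirical measures to the characteristic system of the atoms, identify the measure-LMO with the matrix LMO through $\MN_{\mu_X}(v)^2=n^{-1}\mN(V)^2$ and the pairing $n^{-1}\langle G,V\rangle_F$, and use that the rows of $\nabla\mF(X)$ are $g_{\mu_X}(x_i)^\top$ when $\mF(X)=n\MF(\mu_X)$. Your localized test-function argument for the converse and your row-averaging treatment of coincident particles fill in details that the paper's sketch leaves implicit.
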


Extending the global-in-time PDE well-posedness theory of \citet{ChizatBach2018}, which covers the trace case $p=1$ under smoothness and growth assumptions on the feature map, to general Schatten geometries is a difficult open problem left for future work.
The major question left open by the construction above is whether \eqref{eq:measure-spectral-flow} is the gradient flow of an actual distance on probability measures. The next two sections answer this question: first by defining a static Spectral Wasserstein cost, and then by proving that, for monotone norms, this cost admits a Benamou--Brenier dynamic formulation whose infinitesimal norm is precisely $\MN_\mu$.

\begin{remark}[Connection with Muon]
Appendix~\ref{sec:muon-extensions} relates the normalized-flow viewpoint to two
practical ingredients of Muon. Finitely many Newton--Schulz iterations replace
the exact LMO by a polynomial spectral map generated by an explicit
functional $\gamma$. Momentum can be modeled by an exponentially averaged force
variable, leading in the mean-field limit to a phase-space Liouville equation;
this is no longer a gradient flow on the position marginal alone, and thus lies
partly outside the Spectral Wasserstein geometry developed below.
\end{remark}

\section{Static Spectral Wasserstein Geometry}
\label{sec:static}

\paragraph{Kantorovich cost.}
This section introduces the Spectral Wasserstein transport cost. The key point is that the correct object is coupling-based and depends on a norm $\gamma$ acting on the global displacement covariance.

\begin{definition}[Static  Spectral Wasserstein]
\label{def:static}
For $\mu,\nu\in\cP_2(\R^d)$, let $\Pi(\mu,\nu)$ be the set of probability measures on $\R^d\times\R^d$ with marginals $\mu$ and $\nu$, and define
$$
\Wg(\mu,\nu)^2
\coloneqq 
\inf_{\pi\in\Pi(\mu,\nu)}
\gamma\!\left(
\int_{\R^d\times\R^d}(y-x)(y-x)^\top\,d\pi(x,y)
\right).
$$
\end{definition}

The Monge restriction, the possible strictness of the coupling relaxation, and the obstruction to the triangle inequality for non-monotone norms are discussed in Appendix~\ref{app:monge}. In the monotone case, the full metric property is proved later through the dynamic formulation in Corollary~\ref{cor:metric}.

\paragraph{Comparison with $\Wtwo$ and cost-robust formulation.}

We now show that $\Wg$ has the same topology as $\Wtwo$. The proof as well as explicit Schatten constants and sharpness examples, are given in Appendix~\ref{app:static-details}.

\begin{proposition}[Norm comparison]
\label{prop:bounds}
Let $0<c_\gamma\le C_\gamma<\infty$ such that $c_\gamma\tr(S)\le \gamma(S)\le C_\gamma\tr(S)$ for every $S\succeq0$ (such constant always exist).  Then $\sqrt{c_\gamma}\,\Wtwo(\mu,\nu)\le \Wg(\mu,\nu)\le \sqrt{C_\gamma}\,\Wtwo(\mu,\nu)$.
\end{proposition}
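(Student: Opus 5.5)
The plan has two parts: first establish the existence of the constants, then obtain the inequality by applying the norm comparison inside the infimum over couplings.

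\textbf{Existence of the constants.} The trace is the restriction to $\mathbb S_+^d$ of a norm on $\mathbb S^d$: on the PSD cone it coincides with the nuclear norm $\norm{S}_{S_1}=\sum_i\abs{\lambda_i(S)}$. By Definition~\ref{def:monotone-norm}, $\gamma$ is also the restriction of a norm on $\mathbb S^d$. Since $\mathbb S^d$ is finite dimensional, any two norms on it are equivalent. Hence there are constants $0<c\le C<\infty$ with $c\norm{S}_{S_1}\le\gamma(S)\le C\norm{S}_{S_1}$ for all $S\in\mathbb S^d$. Restricting to $S\succeq0$ gives the required $c_\gamma,C_\gamma$. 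One could take the sharp values $c_\gamma=\min\{\gamma(S):S\succeq0,\tr S=1\}$ and $C_\gamma=\max\{\gamma(S):S\succeq0,\tr S=1\}$. These are attained on a compact set, and the minimum is positive because $\gamma$ vanishes only at $0$. Sharpness is not needed for the statement.

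\textbf{Comparison inside the infimum.} Fix $\pi\in\Pi(\mu,\nu)$ and set $\Sigma_\pi\coloneqq\int(y-x)(y-x)^\top\,d\pi(x,y)$. This matrix is PSD, and it is finite since $\mu,\nu\in\cP_2(\R^d)$. Moreover $\tr\Sigma_\pi=\int\norm{y-x}^2\,d\pi$. Applying the hypothesis pointwise in $\pi$ gives
\[
c_\gamma\int\norm{y-x}^2\,d\pi\le\gamma(\Sigma_\pi)\le C_\gamma\int\norm{y-x}^2\,d\pi.
\]
Next, take the infimum over $\pi\in\Pi(\mu,\nu)$ on all three sides. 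Both inequalities are preserved because $c_\gamma,C_\gamma$ are positive constants independent of $\pi$. The outer terms become $c_\gamma\Wtwo(\mu,\nu)^2$ and $C_\gamma\Wtwo(\mu,\nu)^2$, and the middle term becomes $\Wg(\mu,\nu)^2$ by Definition~\ref{def:static}. Taking square roots, which is monotone on $[0,\infty)$, concludes the proof.

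\textbf{Main obstacle.} There is no real difficulty here. The only point needing care is the existence claim, namely that $c_\gamma>0$. This uses that $\gamma$ is the restriction of a genuine norm, so it is positive definite, combined with compactness of $\{S\succeq0:\tr S=1\}$. Monotonicity of $\gamma$ is not needed anywhere in the argument.
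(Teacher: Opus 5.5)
Your proof is correct and follows the paper's argument essentially step for step. The paper also obtains $c_\gamma,C_\gamma$ from continuity and strict positivity of $\gamma$ on the compact section $\{S\succeq0:\tr S=1\}$; your norm-equivalence phrasing is an equivalent way to say this. It then applies $c_\gamma\tr(\Sigma_\pi)\le\gamma(\Sigma_\pi)\le C_\gamma\tr(\Sigma_\pi)$ with $\tr\Sigma_\pi=\int\norm{y-x}^2\,d\pi$ for each coupling, takes infima over $\Pi(\mu,\nu)$, and takes square roots, exactly as you do (and, as you note, without using monotonicity).
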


The comparison bounds of Proposition~\ref{prop:bounds} already imply
separation and topological equivalence with $\Wtwo$. What is not immediate from
the static formulation alone is the triangle inequality. To address this, and to
clarify the geometry of $\Wg$, Theorem~\ref{thm:minmax} below recasts $\Wg$ as a
cost-robust Wasserstein distance. This reformulation has two immediate
consequences: a conditional Brenier theorem in the Monge setting
(Appendix~\ref{app:monge}) and the fact that $\Wg$ is a distance
(Appendix~\ref{subsec:metric-static}). The latter can also be obtained from the
Benamou--Brenier formulation; see Corollary~\ref{cor:metric}.

\begin{theorem}[Max-min and cost-robust representation]
\label{thm:minmax}
For every symmetric matrix
$Q\in\mathbb S^d$, let $\WQ{Q}$ denote the quadratic transport functional
associated with the cost $\ip{Q(x-y)}{x-y}$, namely
$    \WQ{Q}(\mu,\nu)^2
    \coloneqq
    \inf_{\pi\in\Pi(\mu,\nu)}
    \int_{\R^d\times\R^d} (y-x)^\top Q (y-x)\,d\pi(x,y).
$
In particular, $\WQ{\Id}=\Wtwo$. 
Then, denoting $\cK$ any representing set for $\gamma$ as in Definition~\ref{def:monotone-norm},  for every $\mu,\nu\in\cP_2(\R^d)$,
$$
\Wg(\mu,\nu)^2
=
\max_{Q\in\cK}
\inf_{\pi\in\Pi(\mu,\nu)}
\int (y-x)^\top Q (y-x)\,d\pi(x,y)
=
\max_{Q\in\cK}\WQ{Q}(\mu,\nu)^2.
$$ 
Denoting
$\Sigma_\pi\coloneqq\int (y-x)(y-x)^\top\,d\pi(x,y)$, any saddle point
$(Q^\star,\pi^\star)$ satisfies $Q^\star\in\partial\gamma(\Sigma_{\pi^\star})$.
\end{theorem}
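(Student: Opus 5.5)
The plan is to write the static problem as an inf--sup of a bilinear functional and then exchange inf and sup with a minimax theorem. Fix $\mu,\nu\in\cP_2(\R^d)$ and set $L(Q,\pi)\coloneqq\tr(Q\Sigma_\pi)=\int (y-x)^\top Q(y-x)\,d\pi$. Since $\Sigma_\pi\succeq0$ for every coupling, Definition~\ref{def:representing-set} gives $\gamma(\Sigma_\pi)=\max_{Q\in\cK}L(Q,\pi)$. Hence
\[
\Wg(\mu,\nu)^2=\inf_{\pi\in\Pi(\mu,\nu)}\max_{Q\in\cK}L(Q,\pi).
\]
The weak inequality $\inf_\pi\max_Q L\ge\max_Q\inf_\pi L=\max_{Q\in\cK}\WQ{Q}(\mu,\nu)^2$ is immediate. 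All the work is in the reverse inequality and in checking that the outer maximum over $Q$ is attained.

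For the reverse inequality I will apply Sion's minimax theorem. The required structure is as follows.
\begin{itemize}
\item $L$ is linear in $Q$ and affine in $\pi$.
\item $\cK$ is convex and compact, as assumed in Definition~\ref{def:representing-set}.
\item $\Pi(\mu,\nu)$ is convex and compact for the narrow topology, since its marginals are tight.
\item For fixed $Q\succeq0$, the map $\pi\mapsto L(Q,\pi)$ is narrowly lower semicontinuous, because the integrand is continuous and nonnegative.
\item For fixed $\pi$, the map $Q\mapsto L(Q,\pi)$ is continuous, since $\Sigma_\pi$ is finite when $\mu,\nu\in\cP_2$.
\end{itemize}
Sion's theorem only needs compactness on one side. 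Taking $\cK$ as the compact side gives $\inf_\pi\sup_Q L=\sup_Q\inf_\pi L$ directly. Taking instead $\Pi(\mu,\nu)$ as the compact side, with lower semicontinuity in $\pi$, gives $\min_\pi\sup_Q L=\sup_Q\inf_\pi L$; in this version the infimum over $\pi$ is attained.

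Next I will show that the outer maximum over $Q$ is attained. The function $Q\mapsto\WQ{Q}(\mu,\nu)^2$ is an infimum of linear functions of $Q$, so it is concave and upper semicontinuous. An upper semicontinuous function on the compact set $\cK$ attains its maximum, which turns $\sup$ into $\max$. The same infimum-of-linear-functions description also shows that $\WQ{Q}(\mu,\nu)^2$ coincides with the standard Kantorovich value for the continuous cost $\ip{Q(x-y)}{x-y}$, which justifies the notation $\WQ{Q}$.

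For the saddle-point statement, let $(Q^\star,\pi^\star)$ be a saddle point. The saddle inequality $L(Q,\pi^\star)\le L(Q^\star,\pi^\star)$ for all $Q\in\cK$ gives $\tr(Q^\star\Sigma_{\pi^\star})=\max_{Q\in\cK}\tr(Q\Sigma_{\pi^\star})=\gamma(\Sigma_{\pi^\star})$. To conclude $Q^\star\in\partial\gamma(\Sigma_{\pi^\star})$, I extend $\gamma$ to all of $\mathbb S^d$ by the support function $\sigma_{\cK}(S)=\max_{Q\in\cK}\tr(QS)$, which is convex and agrees with $\gamma$ on $\mathbb S_+^d$. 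For any $S$,
\[
\gamma(S)\ge\tr(Q^\star S)=\gamma(\Sigma_{\pi^\star})+\tr\bigl(Q^\star(S-\Sigma_{\pi^\star})\bigr),
\]
which is exactly the subgradient inequality. This is the standard characterization of the subdifferential of a support function as the set of maximizers.

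The main obstacle I expect is the topological bookkeeping in the minimax step. The narrow compactness of $\Pi(\mu,\nu)$ and the lower semicontinuity of $\pi\mapsto\int c\,d\pi$ for an unbounded continuous cost bounded below are both standard. Nonnegativity of $Q$, guaranteed because $\cK\subset\mathbb S_+^d$, is exactly what makes the cost bounded below and so what makes the lower semicontinuity argument work.
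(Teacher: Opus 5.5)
Your proposal is correct and follows the paper's proof. Both rewrite $\gamma(\Sigma_\pi)$ through the support representation over $\cK$ and exchange $\inf_\pi$ and $\max_Q$ by Sion's theorem, using the same hypotheses: convexity and compactness of $\Pi(\mu,\nu)$ and $\cK$, affinity, continuity in $Q$, and narrow lower semicontinuity in $\pi$ (which comes from $Q\succeq0$). Your extra steps supply details the paper's appendix proof leaves implicit: attainment of the outer maximum via upper semicontinuity of $Q\mapsto\WQ{Q}(\mu,\nu)^2$, and the derivation of $Q^\star\in\partial\gamma(\Sigma_{\pi^\star})$ from the saddle inequality, read for the extension $S\mapsto\max_{Q\in\cK}\tr(QS)$, equivalently as the subgradient inequality on $\mathbb S_+^d$.
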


\begin{proofsketch}
Details are given in Appendix~\ref{app:static-details}. 
The support
representation of $\gamma$ gives
$\gamma(\Sigma_\pi)=\max_{Q\in\cK}\tr(Q\Sigma_\pi)$, with
$\tr(Q\Sigma_\pi)=\int (y-x)^\top Q(y-x)\,d\pi(x,y)$. Hence
$\Wg(\mu,\nu)^2=\inf_{\pi\in\Pi(\mu,\nu)}\max_{Q\in\cK}F(\pi,Q)$, where
$F(\pi,Q)\coloneqq\int (y-x)^\top Q(y-x)\,d\pi(x,y)$.
The conclusion follows by exchanging the infimum and the maximum. Indeed,
$\Pi(\mu,\nu)$ is convex, $\cK$ is compact and convex, and $F$ is affine in both
variables, continuous in $Q$, and lower semicontinuous in $\pi$. Sion's minimax
theorem therefore, gives
$\inf_{\pi}\max_{Q}F(\pi,Q)=\max_{Q}\inf_{\pi}F(\pi,Q)$. The inner infimum is
precisely $\WQ{Q}(\mu,\nu)^2$, yielding
$\Wg(\mu,\nu)^2=\max_{Q\in\cK}\WQ{Q}(\mu,\nu)^2$.
\end{proofsketch}

\paragraph{Discrete setting.}
This subsection spells out the finite-dimensional version used in computations and shows how the abstract coupling problem becomes a concrete convex program. We consider two discrete measures $\mu\coloneqq\sum_{i=1}^n a_i\delta_{x_i}$ and $\nu\coloneqq\sum_{j=1}^m b_j\delta_{y_j}$, with weights $a_i,b_j\ge 0$ summing to one. As in classical Kantorovich transport, a coupling is represented by a transport matrix $P\in\R_+^{n\times m}$ satisfying $P\mathbf 1_m=a$ and $P^\top\mathbf 1_n=b$. The discrete static Spectral Wasserstein problem can then be written as the convex optimization problem
$$
\Wg(\mu,\nu)^2=
\min_{P\ge 0,\;P\mathbf 1_m=a,\;P^\top\mathbf 1_n=b}
\gamma\!\big(
\sum_{i=1}^n\sum_{j=1}^m P_{ij}(y_j-x_i)(y_j-x_i)^\top
\big).
$$
For $\gamma=\gamma_p$ and $p=1$ this reduces to the usual linear optimal-transport problem with quadratic cost, hence to a classical assignment problem in the equal-weight case. For $p=\infty$, the problem reduces to a positive-semidefinite programming problem on the displacement covariance. 
Figure~\ref{fig:static-pair} shows how the optimal matching changes from the
trace to the operator endpoint. Proposition~\ref{prop:large-mean-schatten}
explains this effect in the regime where the two measures have well-separated
means: while the trace case remains isotropic, every Schatten geometry with
$p>1$ asymptotically selects a quadratic cost aligned with the mean
displacement direction, making the matching primarily sensitive to
displacements parallel to the vector joining the two means.
Note that, as explained in Appendix~\ref{app:monge}, for $p=+\infty$ one does not expect the existence of a Monge map even for measures with density since the optimal $Q^\star$ is in general rank 1 (see discussion in Remark~\ref{rmk:muon-insight}) and this is exactly what is hinted by Figure~\ref{fig:static-pair}.
Additional image-defined shape pairs are shown in Appendix~\ref{app:extra-static-dynamic}.

\begin{figure}[!ht]
\centering
\begin{tabular}{@{}c@{}cc@{}c@{}}
\includegraphics[width=.24\linewidth]{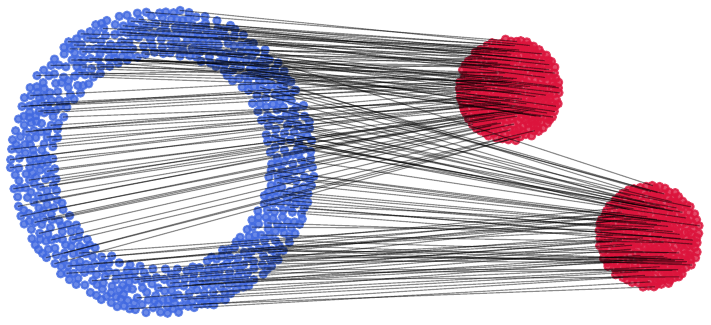}
&
\includegraphics[width=.24\linewidth]{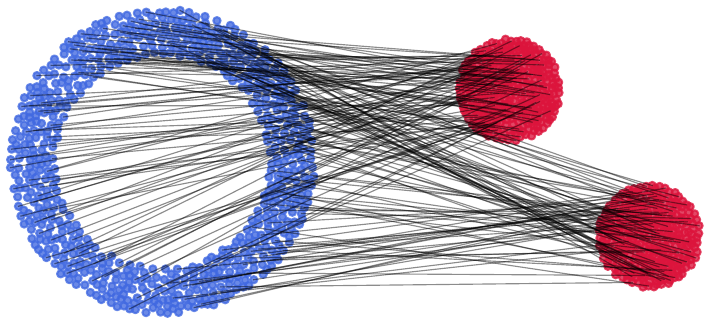}
&
\includegraphics[width=.24\linewidth]{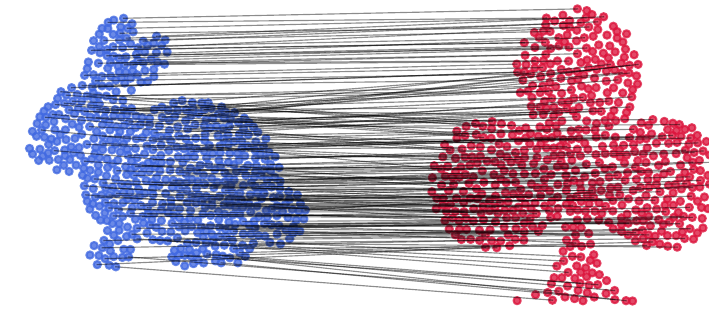}
&
\includegraphics[width=.24\linewidth]{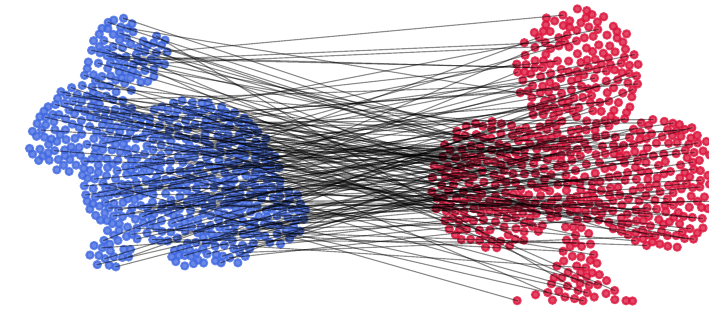}
\\[-.2em]
\scriptsize (a) $p=1$
&
\scriptsize (b) $p=\infty$
&
\scriptsize (c) $p=1$
&
\scriptsize (d) $p=\infty$\\[-2mm]
\end{tabular}
\caption{Static spectral couplings for two source-target pairs. Each pair is shown for $p=1$ and
$p=\infty$, with only a subsample of matched segments displayed.}
\label{fig:static-pair}
\end{figure}

\paragraph{Gaussian setting and Spectral Bures distance.}\label{subsec:gaussians}

Gaussian marginals compress the transport problem to a finite-dimensional optimization over admissible covariance blocks.
The proof of the following proposition~\ref{thm:gaussian} and the closed commuting formula for Schatten norms are collected in Appendix~\ref{app:gaussians}.

\begin{proposition}[Gaussian reduction]
\label{thm:gaussian}
Let $\mu=\mathcal N(m_0,\Sigma_0)$ and $\nu=\mathcal N(m_1,\Sigma_1)$. Then $\Wg(\mu,\nu)^2=\inf_K \Big\{ \gamma((m_1-m_0)(m_1-m_0)^\top+\Sigma_0+\Sigma_1-K-K^\top \::\: 
	\left(\begin{smallmatrix}\Sigma_0&K\\K^\top&\Sigma_1\end{smallmatrix}\right)\succeq0 \Big\}$.
\end{proposition}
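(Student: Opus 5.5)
The plan is to observe that the objective $\gamma(\Sigma_\pi)$ in Definition~\ref{def:static} depends on the coupling $\pi$ only through its second-order moments. The infimum over $\Pi(\mu,\nu)$ then collapses to an infimum over the cross-covariance block, and it remains to identify exactly which blocks are attainable.

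\textbf{Step 1 (reduction to moments).} For $\pi\in\Pi(\mu,\nu)$ write $K_\pi\coloneqq\int (x-m_0)(y-m_1)^\top\,d\pi(x,y)$. Expanding $y-x=(m_1-m_0)+(y-m_1)-(x-m_0)$, the cross terms involving a single centered variable integrate to zero because the marginals have means $m_0,m_1$. This yields
\[
\Sigma_\pi=(m_1-m_0)(m_1-m_0)^\top+\Sigma_0+\Sigma_1-K_\pi-K_\pi^\top .
\]
Hence $\Wg(\mu,\nu)^2=\inf_{K\in\mathcal C}\gamma(\cdots)$, where $\mathcal C\coloneqq\{K_\pi:\pi\in\Pi(\mu,\nu)\}$.

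\textbf{Step 2 (characterizing $\mathcal C$).} I will show that $\mathcal C$ is exactly the set of $K$ with $\left(\begin{smallmatrix}\Sigma_0&K\\K^\top&\Sigma_1\end{smallmatrix}\right)\succeq0$.

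The inclusion $\subseteq$ is immediate: this block matrix is the covariance of $(x,y)$ under $\pi$, hence positive semidefinite.

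For $\supseteq$, take any $K$ making the block matrix PSD. The Gaussian $\mathcal N\bigl((m_0,m_1),\left(\begin{smallmatrix}\Sigma_0&K\\K^\top&\Sigma_1\end{smallmatrix}\right)\bigr)$ on $\R^{2d}$ is well defined, including the degenerate case. Its marginals are $\mathcal N(m_0,\Sigma_0)=\mu$ and $\mathcal N(m_1,\Sigma_1)=\nu$, so it lies in $\Pi(\mu,\nu)$, and its cross-covariance is $K$.

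Combining Steps 1 and 2 gives the stated formula with equality, not merely an inequality. The Gaussian marginal assumption is used only in the $\supseteq$ direction.

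\textbf{Step 3 (optional: attainment).} The feasible set of $K$ is compact, since $\|K\|\le\|\Sigma_0\|^{1/2}\|\Sigma_1\|^{1/2}$ by the PSD constraint, and $\gamma$ is continuous. So the infimum over $K$ is a minimum, and the optimal coupling can be taken Gaussian. The statement only asks for the infimum, so this is a remark rather than a needed step.

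\textbf{Main obstacle.} None of the steps is deep. The only point needing care is the surjectivity in Step 2 when the block matrix is singular. In that case I will rely on the fact that Gaussian measures are defined for any PSD covariance, for instance as the pushforward $x=m_0+A\xi$, $y=m_1+B\xi$ with $\xi\sim\mathcal N(0,\Id_{2d})$ and $[A;B][A;B]^\top$ equal to the block matrix. The marginal laws are then Gaussian with the prescribed covariances, regardless of invertibility.
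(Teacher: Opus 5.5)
Your proof is correct and follows the same route as the paper. The paper also argues that the objective depends on the coupling only through the cross-covariance $K$, that every coupling yields a $K$ satisfying the block positive-semidefinite constraint, and that every such $K$ is realized by a jointly Gaussian coupling; your explicit moment expansion, your construction for singular block covariances, and your attainment remark fill in details the paper leaves implicit.
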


\section{Dynamic Formulation and Geodesics}
\label{sec:dynamic}
We now turn to the Benamou--Brenier side \citep{BenamouBrenier2000}. This section assumes that $\gamma$ is monotone, so Appendix~\ref{app:matrix-recap} allows us to choose $\cK\subset\mathbb S_+^d$; this positive-semidefinite reduction is the key point in the proof of Theorem~\ref{thm:staticdynamic}, stating the dynamic-static equivalence.

\begin{definition}[Dynamic Spectral Wasserstein]
\label{def:dynamic}
For $\mu_0,\mu_1\in\cP_2(\R^d)$, define $\Wgbb(\mu_0,\mu_1)^2$ as the infimum of $\int_0^1\gamma(\int v_t(x)v_t(x)^\top d\mu_t(x))dt$ over narrowly continuous curves and velocities satisfying $\partial_t\mu_t+\divv(\mu_t v_t)=0$ with endpoints $\mu_0,\mu_1$.
\end{definition}
The equivalent convex momentum formulation is given in Appendix~\ref{app:dynamic-details}.
The following theorem, which is the main result of the paper, states the equivalence between the static and dynamic formulations.  
The proof of the theorem and its corollaries are in Appendix~\ref{app:dynamic-details}.

\begin{theorem}[Static-dynamic equivalence]
\label{thm:staticdynamic}
If $\gamma$ is monotone, one has  $\Wgbb(\mu_0,\mu_1)=\Wg(\mu_0,\mu_1)$.
\end{theorem}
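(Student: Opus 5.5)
We prove the two inequalities $\Wgbb\le\Wg$ and $\Wg\le\Wgbb$ separately; monotonicity of $\gamma$ is used only in the second.

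\textbf{Upper bound $\Wgbb\le\Wg$.} Take any coupling $\pi\in\Pi(\mu_0,\mu_1)$ and the displacement interpolation $\mu_t\coloneqq((1-t)x+ty)_\#\pi$. This curve is narrowly continuous and has endpoints $\mu_0,\mu_1$. It solves the continuity equation with the barycentric velocity $v_t$ defined by $v_t\mu_t=((1-t)x+ty)_\#\big((y-x)\pi\big)$. By Jensen's inequality for the matrix-valued convex map $w\mapsto ww^\top$ (conditional expectation), we get
\begin{equation*}
\int v_tv_t^\top\,d\mu_t\preceq\int (y-x)(y-x)^\top\,d\pi=\Sigma_\pi .
\end{equation*}
Monotonicity of $\gamma$ then gives $\gamma(\int v_tv_t^\top d\mu_t)\le\gamma(\Sigma_\pi)$ for every $t$. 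Integrating in $t$ and taking the infimum over $\pi$ yields $\Wgbb^2\le\Wg^2$. This is the one place where monotonicity is genuinely needed, because the Jensen step only produces a Loewner inequality.

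\textbf{Lower bound $\Wg\le\Wgbb$.} We use the representation of Theorem~\ref{thm:minmax}, $\Wg^2=\max_{Q\in\cK}\WQ{Q}(\mu_0,\mu_1)^2$ with $\cK\subset\mathbb S_+^d$. Fix an admissible curve $(\mu_t,v_t)$ and any $Q\in\cK$. Pointwise in $t$,
\begin{equation*}
\gamma\Big(\int v_tv_t^\top d\mu_t\Big)\ge \tr\Big(Q\int v_tv_t^\top d\mu_t\Big)=\int v_t^\top Qv_t\,d\mu_t .
\end{equation*}
Hence the dynamic cost of the curve dominates its Benamou--Brenier action for the constant quadratic metric $Q$.

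For $Q\succ0$, the change of variables $x\mapsto Q^{1/2}x$ maps the problem to the classical Benamou--Brenier theorem for $\Wtwo$. Pushing forward by $Q^{1/2}$ turns the action into the Euclidean action of the image curve. Therefore
\begin{equation*}
\int_0^1\!\!\int v_t^\top Qv_t\,d\mu_t\,dt\ge \Wtwo\big(Q^{1/2}_\#\mu_0,Q^{1/2}_\#\mu_1\big)^2=\WQ{Q}(\mu_0,\mu_1)^2 .
\end{equation*}

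For singular $Q\in\cK$, apply the same argument to $Q+\varepsilon\Id$. Two facts then let us pass to $\varepsilon\to0$:
\begin{itemize}
\item the action is affine in $Q$, so it converges to the action for $Q$;
\item $\WQ{Q}^2$ is concave and upper semicontinuous in $Q$, being an infimum of affine functions, and monotone in $Q$ since the cost increases with $Q$, so $\WQ{Q+\varepsilon\Id}^2\ge\WQ{Q}^2$.
\end{itemize}
The inequality therefore survives in the limit. Taking the maximum over $Q\in\cK$ and then the infimum over curves gives $\Wgbb^2\ge\Wg^2$.

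\textbf{Main obstacle.} The delicate step is the lower bound in the degenerate case. The dynamic problem uses merely narrowly continuous curves with possibly finite but unbounded-type actions, so we need the classical Benamou--Brenier lower bound for arbitrary such curves rather than only smooth ones. We would invoke the Ambrosio--Gigli--Savaré characterization of absolutely continuous curves in $\Wtwo$, namely the length of a curve is at most its action. Finiteness of the $\gamma$-action implies a finite $\Wtwo$ action by Proposition~\ref{prop:bounds}, since $\gamma\ge c_\gamma\tr$, so this characterization applies. The limit in $\varepsilon$ is the only remaining technical point, and the monotonicity of $\WQ{Q}$ in $Q$ settles it.
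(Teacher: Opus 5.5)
Your proof is correct, but your lower bound takes a different route from the paper's. The paper applies the superposition principle once and obtains a single endpoint coupling $\pi=(e_0,e_1)_\#\eta$. Jensen along each path then gives $\tr(Q\Sigma_\pi)\le\int_0^1\!\int v_t^\top Qv_t\,d\mu_t\,dt\le\int_0^1\gamma(\int v_tv_t^\top d\mu_t)\,dt$ for all $Q\in\cK$ at once, and taking the supremum over $Q$ yields $\gamma(\Sigma_\pi)$ directly. That argument needs neither the minimax theorem nor special treatment of singular $Q$, and it produces an explicit static competitor.

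You instead use the classical Benamou--Brenier inequality as a black box, one $Q$ at a time. Different $Q$ may be served by different couplings, so you must reassemble the bounds through the max-min identity $\Wg^2=\max_{Q\in\cK}\WQ{Q}^2$ of Theorem~\ref{thm:minmax}, which brings in Sion's theorem. You also need the $Q+\varepsilon\Id$ perturbation and monotonicity of $Q\mapsto\WQ{Q}^2$ to handle singular $Q$. Both steps are sound, and finiteness of $\int_0^1\|v_t\|^2_{L^2(\mu_t)}dt$ (via $c_\gamma$) is what lets the perturbation pass to the limit.

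Your upper bound uses the same displacement interpolation as the paper, but handles it more carefully. The paper takes the velocity to be $y-x$ and writes $\int v_tv_t^\top d\mu_t=\Sigma_\pi$, which presumes the segments do not cross. Your barycentric field with matrix Jensen gives the Loewner inequality in general, and monotonicity turns it into the needed bound.

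One correction to your commentary. You first say monotonicity is used only in the second inequality, then call the upper bound ``the one place'' it is needed. It is in fact indispensable in the lower bound too. Monotonicity is what allows $\cK\subset\mathbb S_+^d$, and for an indefinite $Q$ the action $\int_0^1\!\int v_t^\top Qv_t\,d\mu_t\,dt$ is not bounded below by $\WQ{Q}^2$. The non-monotone gauge of Remark~\ref{rem:notmetric} shows that $\Wg\le\Wgbb$ itself fails without it. Moving $\delta_0$ to $\delta_{e_1+e_2}$ through $e_1$ has dynamic action $4$, while $\Wg^2=2+M>4$.
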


\begin{proofsketch}
The detailed argument is given in Appendix~\ref{app:dynamic-details}. For the
dynamic-to-static inequality, let $(\mu_t,v_t)$ solve the continuity equation.
By the superposition principle, it is represented by a probability measure
$\eta$ on absolutely continuous curves, with $\gamma_t{}_\#\eta=\mu_t$ and
$\dot\gamma_t=v_t(\gamma_t)$ for a.e. $t$. The endpoint coupling
$\pi=(\gamma_0,\gamma_1)_\#\eta$ then satisfies
$\gamma_1-\gamma_0=\int_0^1 v_t(\gamma_t)\,dt$ $\eta$-a.e.
By monotonicity and Appendix~\ref{app:matrix-recap}, the dual support set
$\cK$ may be chosen in $\mathbb S_+^d$. Hence, for every $Q\in\cK$, Jensen's
inequality gives
$\langle Q,(\gamma_1-\gamma_0)(\gamma_1-\gamma_0)^\top\rangle
\le \int_0^1 \langle Q,v_t(\gamma_t)v_t(\gamma_t)^\top\rangle\,dt$.
Integrating with respect to $\eta$ and taking the supremum over $Q\in\cK$
yields the static cost of $\pi$ bounded by the dynamic action. Taking the
infimum over admissible $(\mu_t,v_t)$ gives one inequality.
Conversely, let $\pi$ be an optimal static coupling and set
$\gamma_t=(1-t)x+ty$ for $(x,y)\sim\pi$. Then
$\dot\gamma_t=y-x$, and the induced curve solves the continuity equation. Its
dynamic action equals the static cost of $\pi$, giving the reverse inequality.
\end{proofsketch}

\begin{corollary}[Metric properties]
\label{cor:metric}
$\Wg$ is a distance on $\cP_2(\R^d)$ inducing the same topology as $\Wtwo$.
\end{corollary}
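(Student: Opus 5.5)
The plan is to read off all metric axioms from Theorem~\ref{thm:staticdynamic}, which identifies $\Wg$ with $\Wgbb$, and to take the topological statement from Proposition~\ref{prop:bounds}.

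\textbf{Elementary axioms.} Finiteness on $\cP_2(\R^d)$ and the two-sided bounds both follow from Proposition~\ref{prop:bounds}, since $\Wtwo<\infty$ there.

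\emph{Nonnegativity.} Since $\gamma$ is a norm restricted to $\mathbb S_+^d$, it satisfies $\gamma\ge0$ on $\mathbb S_+^d$.

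\emph{Separation.} If $\Wg(\mu,\nu)=0$, the lower bound $\sqrt{c_\gamma}\,\Wtwo(\mu,\nu)\le\Wg(\mu,\nu)$ gives $\Wtwo(\mu,\nu)=0$, hence $\mu=\nu$. Conversely, the diagonal coupling gives $\Wg(\mu,\mu)=0$.

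\emph{Symmetry.} Push a coupling $\pi$ forward by the swap map $(x,y)\mapsto(y,x)$. The displacement covariance $\int(y-x)(y-x)^\top\,d\pi$ is unchanged under this map, so $\Wg(\mu,\nu)=\Wg(\nu,\mu)$.

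\textbf{Triangle inequality.} This is the only substantive step, and I will prove it in the dynamic picture using $\Wg=\Wgbb$. First I need the reparametrization identity
\[
\Wgbb(\mu_0,\mu_1)=\inf\int_0^1 \MN_{\mu_t}(v_t)\,dt,
\]
where the infimum runs over the same admissible pairs $(\mu_t,v_t)$ as in Definition~\ref{def:dynamic}. The inequality ``$\ge$'' is Cauchy--Schwarz in time. For ``$\le$'', take a curve with $\int_0^1\MN_{\mu_t}(v_t)\,dt<\infty$ and reparametrize it to constant speed. Because $\MN_\mu(\lambda v)=|\lambda|\MN_\mu(v)$, the time change $s=\phi(t)$ rescales the velocity by $\phi'$, which leaves the length unchanged and makes the action equal to the squared length. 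Degenerate intervals where $\MN_{\mu_t}(v_t)=0$ need a small regularization; alternatively one can use $L=\inf\int_0^1\MN$ directly and argue with $\epsilon$-slack.

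With this in hand, fix $\mu,\nu,\rho$ and $\epsilon>0$. Choose a near-optimal curve from $\mu$ to $\nu$ and another from $\nu$ to $\rho$, each of length within $\epsilon$ of the distance. Concatenate them, running the first on $[0,1/2]$ and the second on $[1/2,1]$ with velocities doubled. The concatenation is narrowly continuous and satisfies the continuity equation weakly on $[0,1]$, since test-function identities glue at $t=1/2$. Its length is the sum of the two lengths, so $\Wg(\mu,\rho)\le\Wg(\mu,\nu)+\Wg(\nu,\rho)+2\epsilon$.

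A shortcut avoids reparametrization. Fix $\alpha\in(0,1)$ and give time fraction $\alpha$ to the first curve and $1-\alpha$ to the second, each run at constant speed. The action of the concatenation is $a^2/\alpha+b^2/(1-\alpha)$, where $a$ and $b$ are the two distances up to $\epsilon$. Minimizing over $\alpha$ gives exactly $(a+b)^2$. Each piece can be taken as the straight-line interpolation of an optimal static coupling, as in the proof of Theorem~\ref{thm:staticdynamic}; rescaling time by $1/\alpha$ multiplies its action by $1/\alpha$. I would use this version, since it needs only that time-rescaling multiplies velocities and the $2$-homogeneity of $v\mapsto\gamma(\int vv^\top d\mu)$. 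The main obstacle is then only checking that the glued pair is admissible, which is routine.

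\textbf{Topology.} Proposition~\ref{prop:bounds} shows that $\Wg$ and $\Wtwo$ are bi-Lipschitz equivalent metrics. They therefore have the same convergent sequences and the same open sets.
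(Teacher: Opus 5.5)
Your proposal is correct and is essentially the paper's own argument: symmetry is immediate, separation and topological equivalence follow from the bi-Lipschitz bounds of Proposition~\ref{prop:bounds}, and the triangle inequality comes from Theorem~\ref{thm:staticdynamic} by concatenating and time-rescaling admissible curves. Your $\alpha$-weighted gluing of two displacement interpolations, with action $a^2/\alpha+b^2/(1-\alpha)$ minimized to $(a+b)^2$, carries out that concatenation step explicitly. The paper also records a purely static alternative (Proposition~\ref{prop:metric-static}), writing $\Wg=\max_{Q\in\cK}\WQ{Q}$ and taking the maximum of the pseudometrics $\WQ{Q}$, which avoids the superposition principle altogether.
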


% \begin{proof} Symmetry is obvious. Separation follows from Proposition~\ref{prop:bounds}. The triangle inequality follows from the dynamic formulation by concatenation and time rescaling of admissible curves. \end{proof}

\begin{corollary}[Geodesics]
\label{cor:geodesic}
If $\pi_*$ is any optimal coupling for $\Wg(\mu_0,\mu_1)$, then $\mu_t=((1-t)x+ty)_\#\pi_*$ is a constant-speed geodesic and $\Wg(\mu_s,\mu_t)=|t-s|\,\Wg(\mu_0,\mu_1)$ for every $0\le s\le t\le1$.
\end{corollary}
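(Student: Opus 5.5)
We prove Corollary~\ref{cor:geodesic} by combining the static--dynamic equivalence of Theorem~\ref{thm:staticdynamic} with the triangle inequality of Corollary~\ref{cor:metric}. The interpolation $\mu_t$ gives an \emph{upper} bound on $\Wg(\mu_s,\mu_t)$ by exhibiting an explicit coupling, and the triangle inequality forces equality. Write $\Sigma_*\coloneqq\int (y-x)(y-x)^\top d\pi_*(x,y)$, so that $\Wg(\mu_0,\mu_1)^2=\gamma(\Sigma_*)$.

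\textbf{Step 1 (upper bound).} Fix $0\le s\le t\le 1$. Consider the coupling $\pi_{s,t}\coloneqq\bigl((1-s)x+sy,\,(1-t)x+ty\bigr)_\#\pi_*$. Its marginals are $\mu_s$ and $\mu_t$ by construction, so $\pi_{s,t}\in\Pi(\mu_s,\mu_t)$. The displacement is $\bigl((1-t)x+ty\bigr)-\bigl((1-s)x+sy\bigr)=(t-s)(y-x)$. Hence its displacement covariance is $(t-s)^2\Sigma_*$. Since $\gamma$ is a norm, it is positively homogeneous, which gives
\[
\Wg(\mu_s,\mu_t)^2\le \gamma\bigl((t-s)^2\Sigma_*\bigr)=(t-s)^2\,\Wg(\mu_0,\mu_1)^2 .
\]
This step uses neither monotonicity nor Theorem~\ref{thm:staticdynamic}. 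We also need $\mu_t\in\cP_2(\R^d)$, which follows because $\pi_*$ has finite second moments.

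\textbf{Step 2 (lower bound via the triangle inequality).} Corollary~\ref{cor:metric} says $\Wg$ is a metric; this is where monotonicity enters, through Theorem~\ref{thm:staticdynamic}. Applying the triangle inequality along $0\le s\le t\le 1$ gives
\[
\Wg(\mu_0,\mu_1)\le \Wg(\mu_0,\mu_s)+\Wg(\mu_s,\mu_t)+\Wg(\mu_t,\mu_1).
\]
By Step 1, the three terms on the right are bounded by $s$, $(t-s)$ and $(1-t)$ times $\Wg(\mu_0,\mu_1)$ respectively, and these factors sum to one. So every inequality in the chain must be an equality. If any one of the three bounds from Step 1 were strict, the sum would fall strictly below $\Wg(\mu_0,\mu_1)$, which is a contradiction. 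Hence $\Wg(\mu_s,\mu_t)=(t-s)\Wg(\mu_0,\mu_1)$. This identity is exactly the definition of a constant-speed geodesic in a metric space.

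\textbf{Main obstacle.} The only nontrivial input is the triangle inequality, which is imported from Corollary~\ref{cor:metric} and so ultimately from Theorem~\ref{thm:staticdynamic}. Without monotonicity, Step 1 still holds, but the conclusion could fail. If Corollary~\ref{cor:metric} were unavailable, the fallback would be to argue directly that the dynamic action of the straight-line interpolation restricted to $[s,t]$, reparametrized to $[0,1]$, equals $(t-s)^2\gamma(\Sigma_*)$. One would then combine this with $\Wgbb=\Wg$ and additivity of the action under concatenation to reach the same squeeze.
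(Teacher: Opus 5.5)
Your proof is correct and follows the paper's argument: upper bounds on the three sub-segments $[0,s]$, $[s,t]$, $[t,1]$, squeezed by the triangle inequality of Corollary~\ref{cor:metric}. The one difference is in Step~1: you get the upper bound from the explicit coupling $\pi_{s,t}$, whereas the paper reparametrizes the interpolation and invokes Theorem~\ref{thm:staticdynamic} in its nontrivial direction $\Wg\le\Wgbb$, so your version is slightly more elementary and monotonicity enters only through the triangle inequality.
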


\begin{figure}[!ht]
\centering
\resizebox{\linewidth}{!}{%
\begin{tabular}{@{}cccccccccc@{}}
\includegraphics[width=.098\linewidth]{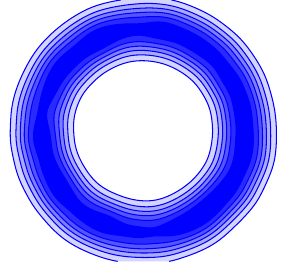}
&
\includegraphics[width=.098\linewidth]{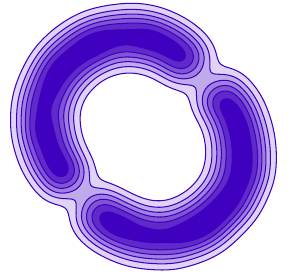}
&
\includegraphics[width=.098\linewidth]{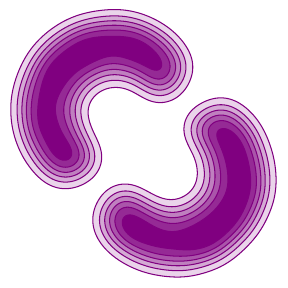}
&
\includegraphics[width=.098\linewidth]{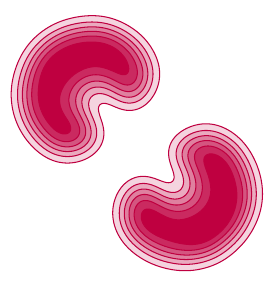}
&
\includegraphics[width=.098\linewidth]{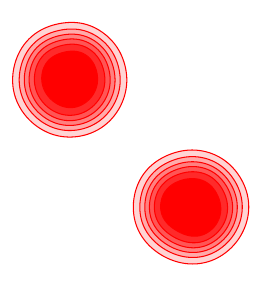}
&
\includegraphics[width=.098\linewidth]{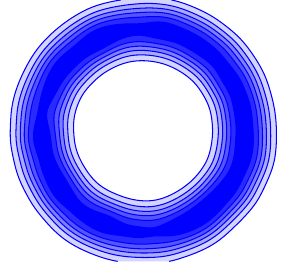}
&
\includegraphics[width=.098\linewidth]{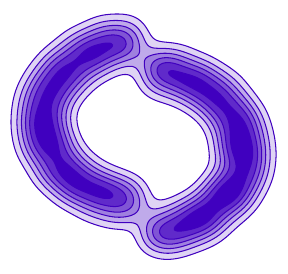}
&
\includegraphics[width=.098\linewidth]{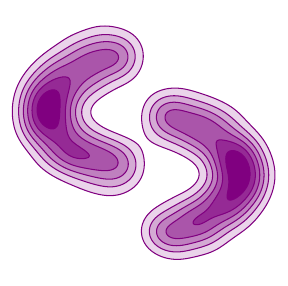}
&
\includegraphics[width=.098\linewidth]{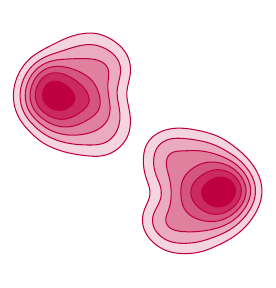}
&
\includegraphics[width=.098\linewidth]{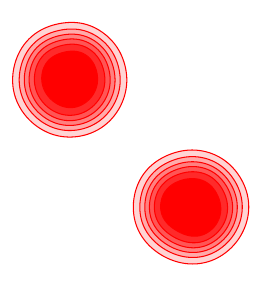}
\\[-.25em]
&
&
\normalsize $p=1$
&
&
&
&
&
\normalsize $p=\infty$
&
&\\[-2mm]
\end{tabular}
}
\caption{Displacement-interpolation density snapshots induced by optimal static couplings, for $p=1$ (first five panels) and for $p=\infty$ (last five panels). We use $n=1500$ particles. As in Figure~\ref{fig:static-pair}, the first and last snapshots correspond to the same source and target distributions ($t=0$ and $t=1$).}
\label{fig:static-interp-dynamic}
\end{figure}

Figure~\ref{fig:static-interp-dynamic} complements Figure~\ref{fig:static-pair}: once an optimal static coupling is fixed, the induced displacement interpolation remains linear in time at the particle level, but the macroscopic density evolution differs markedly between $p=1$ and $p=\infty$. Appendix~\ref{app:extra-static-dynamic} shows additional examples.

\section{Spectral Wasserstein Gradient Flows}
\label{sec:wasserstein-flows}

We now return to the normalized continuity equation introduced in Section~\ref{sec:flows} and explain why, after the static and dynamic theory above, it deserves to be called a Spectral Wasserstein gradient flow.
% Recall the normalized PDE \eqref{eq:measure-spectral-flow}. Before Section~\ref{sec:dynamic}, this was only a normalized transport equation; the Benamou--Brenier identity now identifies $\MN_\mu$ as the infinitesimal norm of $\Wg$.

\paragraph{Metric interpretation of the normalized PDE.}

We now show that \eqref{eq:measure-spectral-flow} is the gradient flow of
$\MF$ for the metric $\Wg$, in the metric-gradient-flow sense of
\citet{AmbrosioGigliSavare2008}.
Recall that the metric derivative of an absolutely continuous curve
$t\mapsto\mu_t$ is
$|\mu'|_{\gamma}(t)\coloneqq\lim_{h\to0}\Wg(\mu_{t+h},\mu_t)/|h|$, whenever
the limit exists, and the local slope of $\MF$ is
$|\partial\MF|_{\gamma}(\mu)\coloneqq
\limsup_{\nu\to\mu}(\MF(\mu)-\MF(\nu))_+/\Wg(\mu,\nu)$.
A $\Wg$-gradient flow of $\MF$ is a curve of maximal slope, i.e., a curve
satisfying the energy dissipation identity
$\frac{d}{dt}\MF(\mu_t)
=-\frac12|\mu'|_{\gamma}(t)^2
-\frac12|\partial\MF|_{\gamma}(\mu_t)^2$.

\begin{theorem}[$\Wg$ gradient flow]
\label{prop:formalGF}
If $\gamma$ is monotone and $\MF$ is sufficiently smooth, then
any sufficiently regular solution of \eqref{eq:measure-spectral-flow} is a
 curve of maximal slope for $\MF$ with respect to $\Wg$.
\end{theorem}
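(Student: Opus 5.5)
The plan is the standard energy-dissipation argument of \citet{AmbrosioGigliSavare2008}, with the Euclidean $L^2(\mu)$ norm replaced by the infinitesimal norm $\MN_\mu$. It has three ingredients: the metric derivative is the minimal $\MN_{\mu_t}$-norm velocity, the slope is the dual norm of the Wasserstein gradient, and a chain rule combined with Young's inequality.

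\textbf{Step 1 (metric derivative).} I will use Theorem~\ref{thm:staticdynamic}: $\Wg=\Wgbb$ with action $\int_0^1\MN_{\mu_t}(v_t)^2dt$. By the usual reparametrization argument for Benamou--Brenier metrics, any absolutely continuous curve solving $\partial_t\mu_t+\divv(\mu_tv_t)=0$ satisfies $|\mu'|_\gamma(t)\le\MN_{\mu_t}(v_t)$ for a.e. $t$. This follows by restricting the dynamic problem to $[t,t+h]$ and rescaling time. For the solution of \eqref{eq:measure-spectral-flow}, with $v_t=\MJ_\gamma^{\mu_t}(g_{\mu_t})$, I also need the reverse inequality. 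I would prove it through the static side and Theorem~\ref{thm:minmax}. Take $\pi_h=(\Id,\Id+hv_t)_\#\mu_t$ and $Q\in\partial\gamma(\mathcal S_{\mu_t}(v_t))\cap\cK$. Then
\[
\Wg(\mu_t,\mu_{t+h})^2\ge \WQ{Q}(\mu_t,\mu_{t+h})^2 = h^2\int v_t^\top Qv_t\,d\mu_t+o(h^2).
\]
The last equality holds because, for regular $v_t$ close to a $Q$-gradient, $\WQ{Q}$ admits the standard first-order expansion along the flow. This step is the main obstacle. Since $\cK$ may be rank-deficient, as for $p=\infty$, the expansion of $\WQ{Q}$ requires care. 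If it fails, a fallback is to argue only $\le$ here and recover equality at the end from the reverse (always valid) inequality $\frac{d}{dt}\MF\ge -\frac12|\mu'|^2-\frac12|\partial\MF|^2$ of the general theory.

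\textbf{Step 2 (slope).} I will show that $|\partial\MF|_\gamma(\mu)=\MN_\mu^*(g_\mu)$, where $\MN^*_\mu(g)\coloneqq\sup\{-\int g\cdot v\,d\mu:\MN_\mu(v)\le1\}$. For the upper bound, given $\nu$ near $\mu$, I take an optimal coupling and expand $\MF(\nu)-\MF(\mu)=\int g_\mu\cdot(y-x)\,d\pi+o(\Wg)$ using smoothness of $\MF$ and Proposition~\ref{prop:bounds} to control remainders by $\Wtwo\lesssim\Wg$. The bound then follows because $\int g\cdot(y-x)\,d\pi\ge-\MN^*_\mu(g)\,\gamma(\Sigma_\pi)^{1/2}$. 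Proving this inequality for a coupling rather than a map requires one step of justification. Conditioning on $x$, I apply Cauchy--Schwarz in the $Q$-metric as in Theorem~\ref{thm:selector-structure-paper}, which gives $\MN^*_\mu(g)^2=\inf_{Q}\tr(Q^{-1}\mathcal S_\mu(g))$. For the lower bound, I test $\nu=(\Id+hv)_\#\mu$ with $v$ the measure-LMO direction and use $\Wg(\mu,\nu)\le h\MN_\mu(v)$.

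\textbf{Step 3 (chain rule and Young).} Smoothness gives $\frac{d}{dt}\MF(\mu_t)=\int g_{\mu_t}\cdot v_t\,d\mu_t$. By Definition~\ref{def:duality}, $v_t$ minimizes $\int g\cdot v+\frac12\MN^2$, so a homogeneity argument (scaling $v\mapsto sv$) gives
\[
\int g\cdot v_t\,d\mu_t=-\MN_{\mu_t}(v_t)^2=-\MN^*_{\mu_t}(g)^2 .
\]
This is the Fenchel equality case. Hence
\[
\frac{d}{dt}\MF=-\frac12\MN(v_t)^2-\frac12\MN^*(g)^2=-\frac12|\mu'|_\gamma^2-\frac12|\partial\MF|_\gamma^2 ,
\]
using Steps 1--2. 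If only $|\mu'|\le\MN(v_t)$ is available, I will obtain $\frac{d}{dt}\MF\le-\frac12|\mu'|^2-\frac12|\partial\MF|^2$. The opposite inequality holds along any absolutely continuous curve because $|\partial\MF|$ is a strong upper gradient, which follows from Step 2 and smoothness. Together these force equality, so the solution is a curve of maximal slope.
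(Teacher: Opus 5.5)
Your proposal is correct and follows the same route as the paper's formal proof: the metric speed comes from the static--dynamic identity, the slope is identified with the dual norm of $\MN_\mu$ (equivalently $\inf_{Q\in\cK\cap\Sppd}\tr(Q^{-1}\mathcal S_\mu(g_\mu))$), the Measure-LMO gives the Fenchel equality, and the chain rule finishes the argument; you justify steps the paper only asserts. In particular, the paper takes $|\mu'|_\gamma(t)=\MN_{\mu_t}(v_t)$ for granted, and your fallback (using only $|\mu'|_\gamma\le\MN_{\mu_t}(v_t)$ plus the upper-gradient inequality) is the cleanest way to close that step; for the direct route you must take $Q=Q^*_{\mu_t}$, for which $v_t=-(Q^*_{\mu_t})^{-1}\nabla\frac{\delta\MF}{\delta\mu}$ is an exact $Q^*$-gradient, because an arbitrary element of $\partial\gamma(\mathcal S_{\mu_t}(v_t))\cap\cK$ can fail (for $p=\infty$, $\mathcal S_{\mu_t}(v_t)$ is a multiple of $\Id$, so that set is all of $\cK$).
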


\begin{proofsketch}
The detailed proof is in Appendix~\ref{sec:proof-formal-gamma-flow}.
The dynamic formulation identifies the tangent norm associated with $\Wg$ as
the action density $\gamma(\int v v^\top\,d\mu)^{1/2}$. Therefore,
for a smooth curve $(\mu_t,v_t)$ solving the continuity equation, the metric
speed satisfies
$|\dot\mu_t|_{\gamma}^2=\gamma(\int v_t v_t^\top\,d\mu_t )$.
Along such a curve, the first variation formula gives
$\frac{d}{dt}\MF(\mu_t)=\int \langle \nabla \frac{\delta \MF}{\delta\mu}(\mu_t), v_t \rangle \,d\mu_t$.
The duality formula for the action norm then yields
$\frac{d}{dt}\MF(\mu_t)\ge
-|\partial \MF|_{\gamma}(\mu_t)\,|\dot\mu_t|_{\gamma}$.
For the vector field prescribed by \eqref{eq:measure-spectral-flow}, equality
holds in this duality bound: the velocity is the Measure-LMO applied to
$\nabla \frac{\delta \MF}{\delta\mu}(\mu_t)$, hence it realizes the steepest
descent direction for the $\Wg$ tangent norm.
Consequently,
$-\frac{d}{dt}\MF(\mu_t)
=
|\partial \MF|_{\gamma}(\mu_t)\,|\dot\mu_t|_{\gamma}$,
and the normalization in \eqref{eq:measure-spectral-flow} gives
$|\dot\mu_t|_{\gamma}=|\partial \MF|_{\gamma}(\mu_t)$. Thus, the energy dissipation
identity holds:
$\frac{d}{dt}\MF(\mu_t)=-|\dot\mu_t|_{\gamma}^2=-|\partial \MF|_{\gamma}(\mu_t)^2$.
This is precisely the curve-of-maximal-slope condition for $\MF$ with respect
to $\Wg$.
\end{proofsketch}

\begin{remark}[Insight on Muon's dynamics]\label{rmk:muon-insight}
In the ``Muon setting'', corresponding to the Schatten-$\infty$ gauge
$\gamma(S)=\lambda_{\max}(S)$, the optimality relation
$Q^\star\in\partial\gamma(\Sigma_{\pi^\star})$ stated in Theorem~\ref{thm:minmax} implies that $Q^\star$ is
supported on the top eigenspace of the optimal displacement covariance
$\Sigma_{\pi^\star}$. If this leading eigenspace is one-dimensional (which is the generic situation), say
$\operatorname{span}(u^\star)$, then $Q^\star=u^\star(u^\star)^\top$ and
$\pi^\star$ is optimal for the projected quadratic cost
$\langle y-x,u^\star\rangle^2$.
Thus, locally, the Muon mean-field dynamics can be viewed as a Wasserstein
gradient flow in which the relevant ground cost is not the full Euclidean
quadratic cost, but a one-dimensional projected cost selected by the current
largest displacement or velocity covariance direction. In this sense, Muon does
not uniformly penalize all directions of motion. Instead, its geometry
normalizes the dominant direction: the active metric looks only at the direction
where the covariance of the infinitesimal displacement is largest, and the
resulting steepest descent field is obtained by the corresponding projected
optimal-transport geometry. This provides a geometric interpretation of Muon's
orthogonalizing effect: the dynamics continuously identifies the dominant
direction in the mean-field motion and rescales the flow through this
worst-direction cost.
\end{remark}

\paragraph{Mean-field models for two-layer MLPs.}
\label{sec:mlp-meanfield}

We now specialize the abstract gradient-flow PDE to the standard mean-field parameterization of two-layer MLPs.
The mean-field predictor is $H_\mu(z)\coloneqq\int_\Omega \phi(z,x)\,d\mu(x)$, and the training objective is $\MF(\mu)\coloneqq R(H_\mu)$ for some loss $R$.
For a two-layer network with scalar output and activation $\sigma$, one may take parameter $x=(u,v)\in\R^d$ and feature $\phi(z,x)\coloneqq u\,\sigma(v\cdot z)$.
This is the standard mean-field representation of a two-layer multilayer perceptron \citep{MeiMontanariNguyen2018,ChizatBach2018}. 
The notation is a deliberate departure from usual machine-learning conventions: the network input is denoted by $z$, while the trainable variable is denoted by $x$ so that it matches the transport notation used throughout the paper. 
Here $d$ is the sum of the block dimensions of $u$ and $v$. In practical implementations of Muon one often normalizes separate parameter blocks, whereas in our continuum model the whole particle $x$ is treated as a single vector; this is the natural mean-field analogue of a single normalized block. A basic example is the quadratic risk $R(H)=\frac12\int \abs{H(z)-y^\star(z)}^2\,d\rho(z)$, or its empirical version on a dataset. Because $H_\mu$ depends linearly on $\mu$, this makes $\MF(\mu)$ a quadratic interaction functional. These energies are of MMD-type \citep{GrettonBorgwardtRaschSchoelkopfSmola2012}, which is why the MMD experiment below is a natural test case for the Spectral Wasserstein flow.
In Figure~\ref{fig:mlp-trajectories}, we train a two-dimensional to
one-dimensional ReLU network, with $\sigma(s)=\max(s,0)$, particles
$x=(u,v_1,v_2)\in\R^3$, and covariates $z\in\R^2$. The target
$y^\star$ is generated by a three-neuron teacher on Gaussian covariates
$z\sim\mathcal N(0,\Id_2)$.
To compare the training dynamics for $p=1$ and $p=\infty$ on the same scale,
we rescale time so that the two flows have comparable small-time energy
decay.
We display the particle evolution in the reduced two-dimensional plane
$|u|v$. Global convergence corresponds to the concentration of particles
toward the rays associated with the teacher neurons, shown as dashed
half-lines in Figure~\ref{fig:mlp-trajectories}(c)--(d), and can also be
visualized by the convergence of the circular histogram toward three Dirac
masses.
Changing $p$ has a strong effect on the early convergence profile, with
$p=\infty$ reaching a significantly lower energy level during the initial
phase.
The ReLu case, and more generally, positively two-homogeneous $\phi$ admits a spherical reduction leading to a generalized unbalanced transport geometry; this is described in Appendix~\ref{app:homogeneous}.

\begin{figure}[!th]
\centering
\begin{tabular}{@{}c@{}c@{}c@{}c@{}}
\includegraphics[height=.16\textheight]{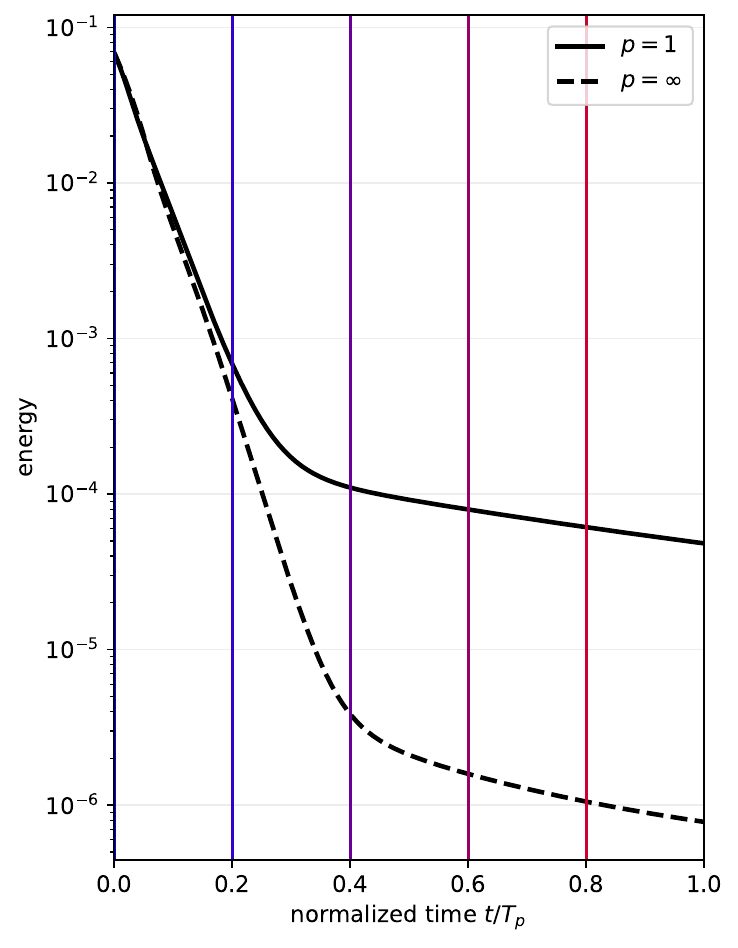}
&
\includegraphics[height=.16\textheight]{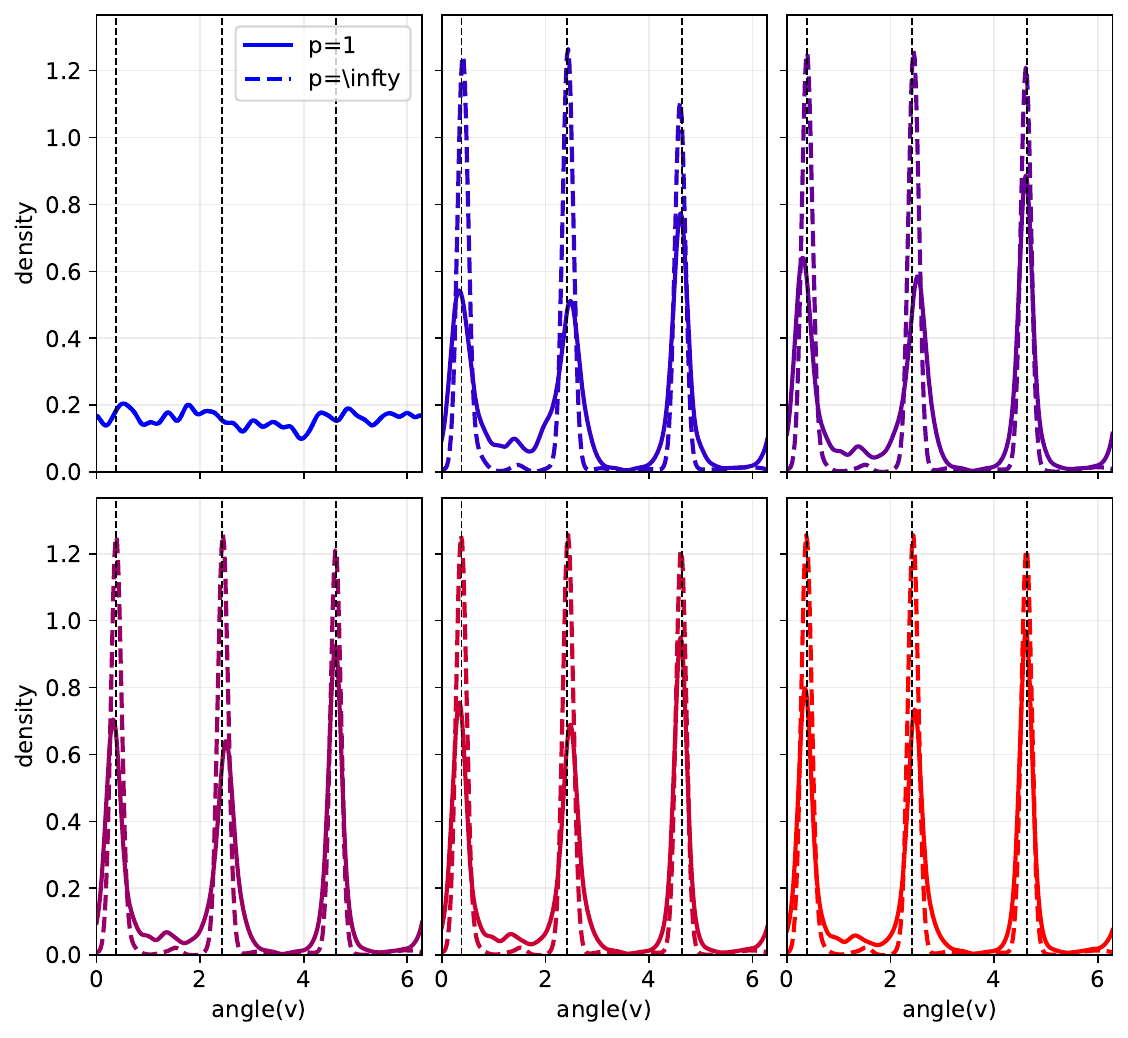}
&
\includegraphics[height=.16\textheight]{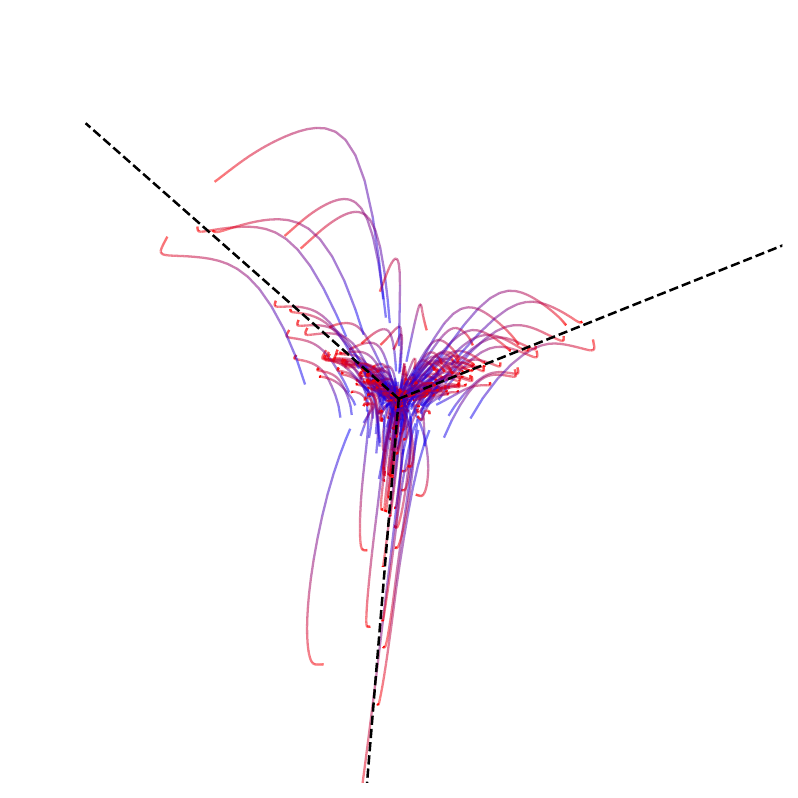}
&
\includegraphics[height=.16\textheight]{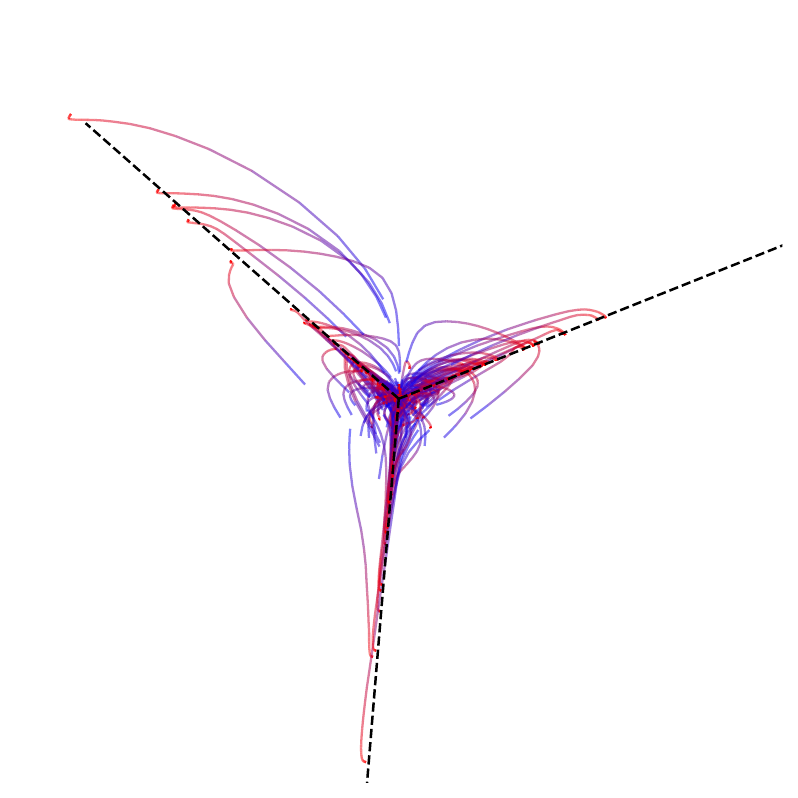}
\\
\small (a) Energy decay
&
\small (b) Circular histograms
&
\small (c) Trajectories ($p=1$)
&
\small (d) Trajectories ($p=\infty$)
\end{tabular}
\caption{Two-layer ReLU MLP training under spectral flows. (a) Energy decay in normalized time (solid: $p=1$, dashed: $p=\infty$), with colored vertical bars marking the snapshot times used in (b). (b) Circular histogram snapshots at matched normalized times. (c)--(d) Particle trajectories in the reduced parameter plane $(|u|v_1,|u|v_2)$ for $p=1$ and $p=\infty$.}
\label{fig:mlp-trajectories}
\end{figure}

\paragraph{Mean-field shallow attention.}
Using the same spectral-flow construction, we also consider a shallow mean-field multi-head attention model. This is a useful stress test because attention uses softmax nonlinearities, precisely the regime where Muon-type spectral normalization is empirically attractive. A datum is a sequence $z=(z_1,\ldots,z_m)$ of tokens in $\R^{d_T}$. A head is a particle $x=(Q,K,V,O)$, with $Q,K,V,O\in\R^{d_H\times d_T}$, so $x\in\R^d$ with $d=4d_Hd_T$. Its elementary feature is the softmax attention map $\phi(z,x)_j\coloneqq O^\top\sum_{i=1}^m \frac{\exp(\langle Qz_j,Kz_i\rangle/\sqrt{d_H})}{\sum_{\ell=1}^m\exp(\langle Qz_j,Kz_\ell\rangle/\sqrt{d_H})}Vz_i$, and the mean-field layer is again $H_\mu(z)=\int\phi(z,x)d\mu(x)$. For an empirical measure $\mu_X$, this is a multi-head attention layer averaged over an arbitrary number $n$ of heads, which is the natural mean-field analogue of increasing the number of heads without fixing a finite architecture. 
Figure~\ref{fig:attention-trajectories} reports an experiment with $m=16$ tokens, $d_T=4$, and $d_H=3$: we fit a random five-head teacher by least squares using a student with $n=200$ heads, and compare the trace and operator geometries over five random initializations. The query/key scale controls the sparsity of the teacher attention matrix. We quantify sparsity through the effective number of attended tokens $N_{\mathrm{eff}}\coloneqq(\sum_i w_i^2)^{-1}$ averaged over attention rows: the dense high-temperature regime has $N_{\mathrm{eff}}\approx 16$ out of $m=16$, while the sparse low-temperature regime has $N_{\mathrm{eff}}\approx 2.1$. The $p=1$ and $p=\infty$ horizons are rescaled so that small-time energy decay rates are comparable; even after this normalization, changing $p$ strongly affects the convergence profile and $p=\infty$ reaches a lower energy during the first phase. 

\begin{figure}[!h]
\centering
\begin{tabular}{@{}c@{}c@{}c@{}}
\includegraphics[width=.33\linewidth]{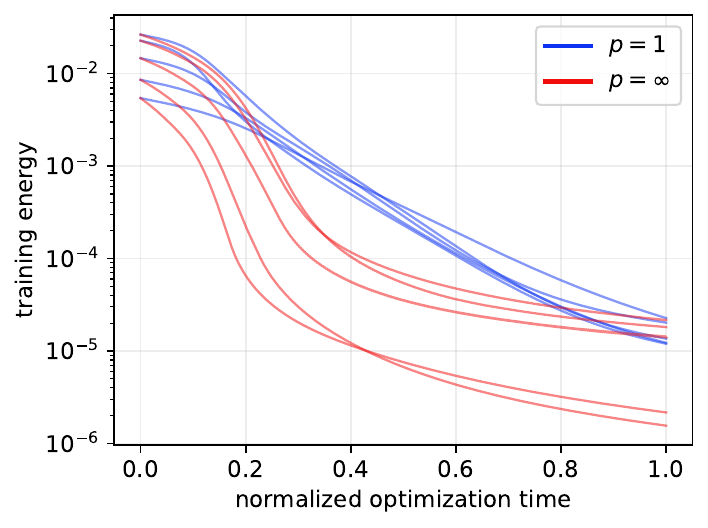}
&
\includegraphics[width=.33\linewidth]{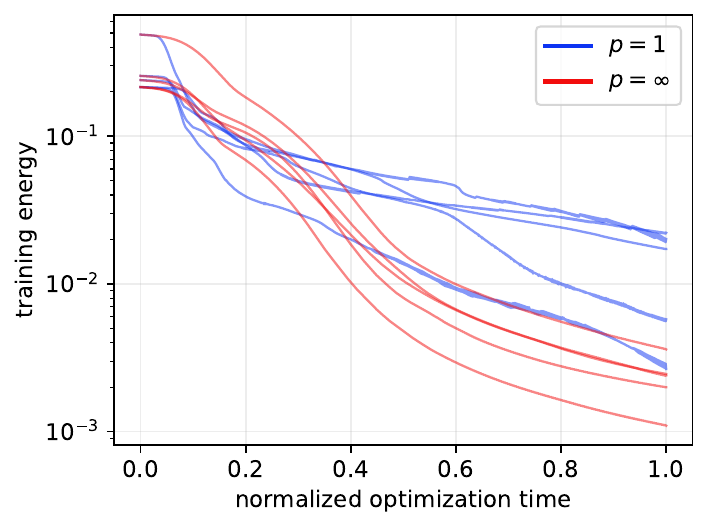}
&
\includegraphics[width=.33\linewidth]{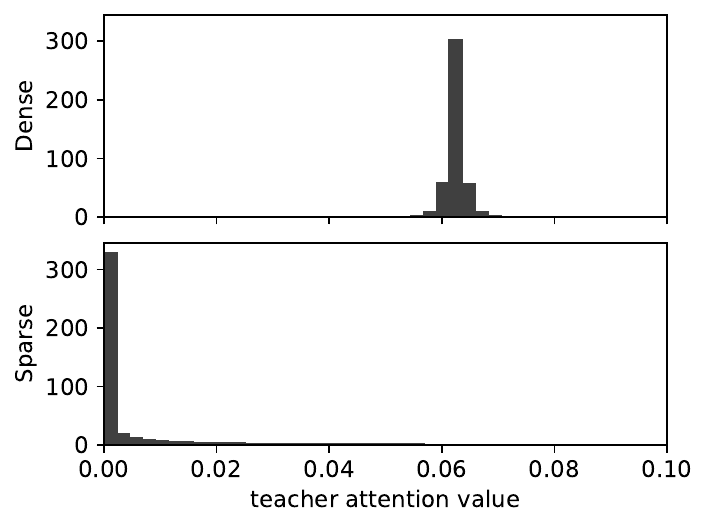}
\\[-2mm]
\small Dense attention
&
\small Sparse attention
&
\small Histogram of teacher attention\\[-2mm]
\end{tabular}
\caption{Shallow-attention training under spectral flows ($p=1$ in blue, $p=\infty$ in red), with five runs per setting. Dense attention corresponds to high temperature and effective support near all tokens ($N_{\mathrm{eff}}\approx16$), while sparse attention corresponds to low temperature and effective support around two tokens ($N_{\mathrm{eff}}\approx2.1$). The right panel stacks the pooled teacher-attention value histograms for the dense and sparse regimes.}
\label{fig:attention-trajectories}
\end{figure}

\paragraph{MMD gradient flows.}
As a simple model for generative fitting and sampling, in the spirit of MMD-based generative models \citep{LiSwerskyZemel2015,BinkowskiSutherlandArbelGretton2018}, we also minimize $\MF(\mu)=\operatorname{MMD}(\mu,\nu)^2$ for the energy-distance kernel $k(x,y)=-\|x-y\|_2$. For empirical measures $\mu=n^{-1}\sum_i\delta_{x_i}$ and $\nu=m^{-1}\sum_j\delta_{y_j}$, with $n=m=200$ particles in $\R^2$, this objective is $\operatorname{MMD}(\mu,\nu)^2=n^{-2}\sum_{i,i'}k(x_i,x_{i'})+m^{-2}\sum_{j,j'}k(y_j,y_{j'})-2(nm)^{-1}\sum_{i,j}k(x_i,y_j)$, with a small smoothing of $\|x-y\|_2$ in the implementation. The same initial cloud evolves with $p=1$ and $p=\infty$.
Figure~\ref{fig:mmd-traj} isolates the effect of the spectral tangent norm without the additional architectural structure of a neural network. The trace-norm flow reacts locally to the MMD force field, while the operator-norm flow produces more globally coordinated trajectories; nevertheless, the scalar loss curves remain close.

\begin{figure}[!ht]
\centering
\includegraphics[width=\linewidth]{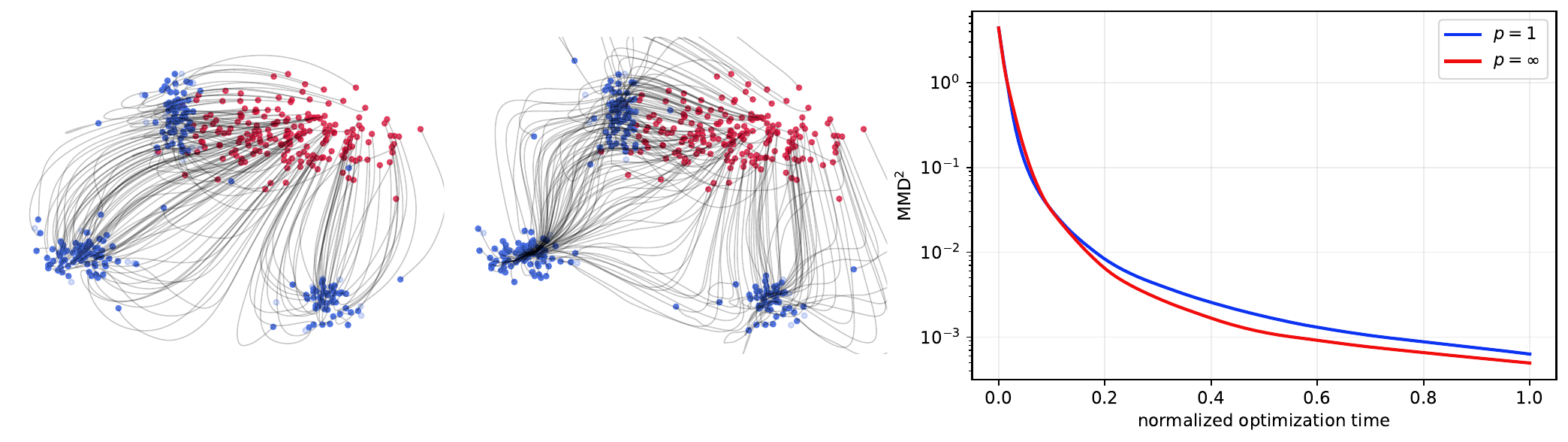}\vspace{-3mm}
\caption{MMD flow trajectories and objective decay for $p=1$ and $p=\infty$. The left and middle panels show all particle trajectories, while the right panel shows the decay of $\operatorname{MMD}(\mu_t,\nu)^2$. 
}
\label{fig:mmd-traj}
\end{figure}

\paragraph{Gaussian-preserving linear networks.}\label{sec:gaussian-linear}
To gain a better understanding of spectrally normalized dynamics, a tractable special case is obtained by taking a linear two-layer network, i.e. $\sigma=\Id$, with particles $x=(u,v)$ and predictor $H_\mu(z)=\int (u^\top z)v\,d\mu(u,v)$. For the least-squares risk $R(H)=\frac12\int |H(z)-y^\star(z)|^2d\rho(z)$, the objective depends only on the first two moments of $\mu$. Write $C_\rho\coloneqq\int zz^\top d\rho(z)$ for the data covariance and $B_\rho\coloneqq\int y^\star(z)z^\top d\rho(z)$ for the teacher cross-moment. As explained in Appendix~\ref{app:gaussian-linear},  Gaussian measures are then preserved by the spectral flow, so the infinite-dimensional PDE reduces to an ODE on the mean and covariance. This covariance ODE is still complicated; under simultaneous diagonalization, white-noise balanced initialization, $C_\rho=\Id$, and $B_\rho=\diag(\beta_i)$, it reduces to coupled scalar modes. Writing the modal covariance block as $\Sigma_i(t)=\left(\begin{smallmatrix}s_i(t)&r_i(t)\\ r_i(t)&s_i(t)\end{smallmatrix}\right)$, the coefficient $r_i(t)$ is the learned predictor coefficient in mode $i$, while $s_i(t)$ is the corresponding balanced layer variance.

\begin{proposition}[Commuting Gaussian modal ODE]
\label{prop:modal-main}
Let $q\coloneqq 2p/(2p-1)$, then for each mode $i$,
$
(\dot r_i,\dot s_i)
=
-\mathcal N(r,s)^{2-q}
(r_i-\beta_i)|r_i-\beta_i|^{q-2}
(h^+_i,h^-_i),
$
where $h^\pm_i := (s_i+r_i)^{q/2} \pm (s_i-r_i)^{q/2}$ and 
$\mathcal N(r,s)^q
\coloneqq
\sum_j |r_j-\beta_j|^q
h^+_j$.
\end{proposition}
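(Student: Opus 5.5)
The plan is to reduce the mean-field PDE \eqref{eq:measure-spectral-flow} to a linear vector field, use the Gaussian-preservation mechanism of Appendix~\ref{app:gaussian-linear}, and then diagonalize everything in each $2\times2$ modal block. I assume zero mean, which balanced white-noise initialization gives and which the flow preserves.

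\textbf{Step 1: the gradient is linear in $x$.} With $C_\rho=\Id$ the risk is $\MF(\mu)=\frac12\norm{M_\mu-B_\rho}_F^2$, where $M_\mu\coloneqq\int vu^\top d\mu$. Hence $\delta\MF/\delta\mu(u,v)=u^\top(M_\mu-B_\rho)^\top v$, and
\[
g_\mu(x)=A_\mu x,\qquad A_\mu=\left(\begin{smallmatrix}0&(M_\mu-B_\rho)^\top\\ M_\mu-B_\rho&0\end{smallmatrix}\right).
\]
Consequently $\mathcal S_\mu(g)=A_\mu\Sigma A_\mu$, with $\Sigma$ the covariance of $\mu$. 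By Theorem~\ref{thm:selector-structure-paper}, the velocity is $-(Q^*)^{-1}A_\mu x$, which is linear in $x$. So Gaussians stay Gaussian and
\[
\dot\Sigma=W\Sigma+\Sigma W^\top,\qquad W=-(Q^*)^{-1}A_\mu .
\]

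\textbf{Step 2: commuting reduction.} Under the simultaneous-diagonalization ansatz, $M_\mu=\diag(r_i)$, so $A_\mu$ decouples into blocks $\delta_i\left(\begin{smallmatrix}0&1\\1&0\end{smallmatrix}\right)$ with $\delta_i\coloneqq r_i-\beta_i$. Each block $\Sigma_i$ and each block of $A_\mu$ is diagonal in the basis $e_\pm=(1,\pm1)/\sqrt2$. In that basis:
\begin{itemize}
\item $\Sigma_i$ has eigenvalues $\lambda_i^\pm=s_i\pm r_i$;
\item $A_\mu$ has eigenvalues $\pm\delta_i$;
\item $\mathcal S$ has eigenvalues $\sigma_i^\pm=\delta_i^2\lambda_i^\pm$.
\end{itemize}
For $\gamma_p$, a representing set is $\cK=\{Q\succeq0:\norm{Q}_{S_{p'}}\le1\}$ with $1/p+1/p'=1$ (Remark~\ref{rem-kgamma-shatten}). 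The objective $\tr(Q^{-1}\mathcal S)$ is unitarily invariant, and by a standard pinching/rearrangement argument its minimizer commutes with $\mathcal S$. The problem therefore becomes the scalar program
\[
\min\Big\{\sum_k \sigma_k/q_k \;:\; \textstyle\sum_k q_k^{p'}\le 1\Big\}.
\]
Its Lagrange conditions give $q_k\propto\sigma_k^{1/(p'+1)}$, normalized so that $\norm{q}_{p'}=1$.

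\textbf{Step 3: read off the modal ODE.} In the eigenbasis, $\dot\lambda_k=2w_k\lambda_k$ with $w_k=\mp\delta_i/q_k$. Substituting $q_k$, one checks that $1/(p'+1)$ turns into exponents involving $q=2p/(2p-1)$, the dual exponent of $2p$. This matches the fact that $\mN=\norm{\cdot}_{S_{2p}}$, whose dual norm is $S_q$, so the result should agree with the matrix LMO $\mJ_{\gamma_p}$ applied to $A\Sigma^{1/2}$. The normalization constant becomes $\mathcal N(r,s)^{2-q}$, with $\mathcal N^q=\sum_j|\delta_j|^q\bigl((\lambda_j^+)^{q/2}+(\lambda_j^-)^{q/2}\bigr)=\sum_j|\delta_j|^q h_j^+$. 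Finally I recover $\dot s_i=(\dot\lambda_i^++\dot\lambda_i^-)/2$ and $\dot r_i=(\dot\lambda_i^+-\dot\lambda_i^-)/2$. The sign pattern $\mp\delta_i$ is what produces $h_i^+$ in $\dot r_i$ and $h_i^-$ in $\dot s_i$, each multiplied by the factor $-\delta_i|\delta_i|^{q-2}$.

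\textbf{Main obstacle.} The hardest part is the exponent bookkeeping in Step 3 together with justifying Step 2. First, the minimizer $Q^*$ must commute with $\mathcal S$ and lie in $\Sppd$. This fails when $\delta_i=0$ or $s_i=r_i$, and there I would invoke the limiting-sequence remark after Theorem~\ref{thm:selector-structure-paper}. Second, the commuting structure must be propagated in time, which I would do by uniqueness for the finite-dimensional ODE. To keep the exponents straight, I would first verify the case $p=1$ ($q=2$), which must reduce to the plain gradient flow $\dot r_i=-2\delta_i s_i$ and $\dot s_i=-2\delta_i r_i$.
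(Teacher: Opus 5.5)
Your derivation is correct, and the exponents close. With $p'=p/(p-1)$ one has $p'/(p'+1)=q/2$ and $q/p'=2-q$, so your Lagrange solution is $(Q^*)^{-1}=\mathcal N^{2-q}\mathcal S^{q/2-1}$ with $\mathcal N^q=\tr(\mathcal S^{q/2})=\sum_j|\delta_j|^q h_j^+$. This gives exactly the paper's $\dot a_i,\dot b_i$, hence the stated $(\dot r_i,\dot s_i)$.

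Your route is the dual of the paper's. The paper (Propositions~\ref{app-prop:gaussian-linear-covariance} and~\ref{app-prop:modal-ode-paper}) optimizes only over linear fields $v(x)=Ax$. It substitutes $B=A\Sigma^{1/2}$ and $K=L\Sigma^{1/2}$, applies the matrix Schatten duality map of Proposition~\ref{prop:schatten}, and reads the modal ODE off the singular values $|r_i-\beta_i|\sqrt{s_i\pm r_i}$ of $K$. You instead compute the optimal metric $Q^*$ of Theorem~\ref{thm:selector-structure-paper}. The two agree in general, because $U\diag(\sigma_i^{q-1})W^\top=(KK^\top)^{q/2-1}K$ and $KK^\top=L\Sigma L=\mathcal S$ (here $L$ is your $A_\mu$). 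Your preconditioner is also the same $P_{p,t}$ the paper uses for the Gaussian KL flow in Proposition~\ref{prop:gaussian-kl-schatten-flow}.

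The trade-offs are as follows:
\begin{itemize}
\item The primal route needs no commutation argument and is insensitive to zero singular values of $K$ when $p<\infty$. However, it imports linearity of the optimal field from Corollary~\ref{app-cor:w2-to-spectral-paper}.
\item Your route derives that linearity and shows the flow as a state-dependent preconditioning of the $\Wtwo$ gradient. The price is handling singular $\mathcal S$.
\end{itemize}

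That singular case is generic here, not exceptional. Every mode with $\beta_j=0$ keeps $r_j\equiv0$, so $\delta_j\equiv0$. Then, for $p>1$, the inverse-trace problem has no minimizer in $\cK\cap\Sppd$ at any time. This is the hypothesis of both Theorem~\ref{thm:selector-structure-paper} and Corollary~\ref{app-cor:w2-to-spectral-paper}, so both fail here.

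Rather than invoking the limiting remark, verify the candidate directly. Take $v_*=-\mathcal N^{2-q}\mathcal S^{q/2-1}g$, with the power taken on the range of $\mathcal S$. It gives $\int g\cdot v_*\,d\mu=-\mathcal N^2$, and, using $p(q-1)=q/2$, $\gamma_p(\int v_*v_*^\top d\mu)=\mathcal N^{4-2q}\|\mathcal S^{q-1}\|_{S_p}=\mathcal N^2$. So its objective value $-\frac12\mathcal N^2$ meets the weak-duality bound $-\frac12\tr(Q_n^{-1}\mathcal S)\to-\frac12\mathcal N^2$ along your commuting scalar-program matrices $Q_n\in\cK\cap\Sppd$. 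This makes the pinching step unnecessary as well.

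Finally, do not rely on Cauchy--Lipschitz uniqueness to propagate the modal structure. When another mode is active, the field is only H\"older in $\delta_i$ at $\delta_i=0$, and for $p=\infty$ it is discontinuous there. The paper simply asserts invariance. The safe reading is that the structured curve generated by the modal ODE solves \eqref{eq:measure-spectral-flow}.
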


Note that the coupling factor $\mathcal N(r,s)$ is scalar, so the features
$r_i$ remain decoupled up to a reparametrization of time and can be analyzed
independently.
For a rank-one target, the admissible state set is the cone $s>|r|$. The
predictor coefficient $r_t$ moves toward $\beta$, while the terminal value
of $s_t$ records the implicit covariance bias selected by $p$.
Figure~\ref{fig:gaussian-leaves}(a) overlays several such trajectories from two
initial points: all reach the same predictor line $r=\beta$, but at different
covariance levels, revealing a nontrivial $p$-dependent implicit bias.
Panels~(b)--(c) show the conserved leaves that organize these paths for the
endpoint geometries: for $p=1$, the invariant is $s^2-r^2$, whereas for
$p=\infty$, it is $\sqrt{s+r}+\sqrt{s-r}$ (see Appendix~\ref{app:gaussian-linear} for proofs).
Panel~(d) shows the impact of $p$ in the rank-two case over the
$(r_1,r_2)$ feature space.

\begin{figure}[!ht]
\centering
\resizebox{\linewidth}{!}{%
\begin{tabular}{@{}c@{}c@{}c@{}c@{}}
\includegraphics[height=.13\textheight]{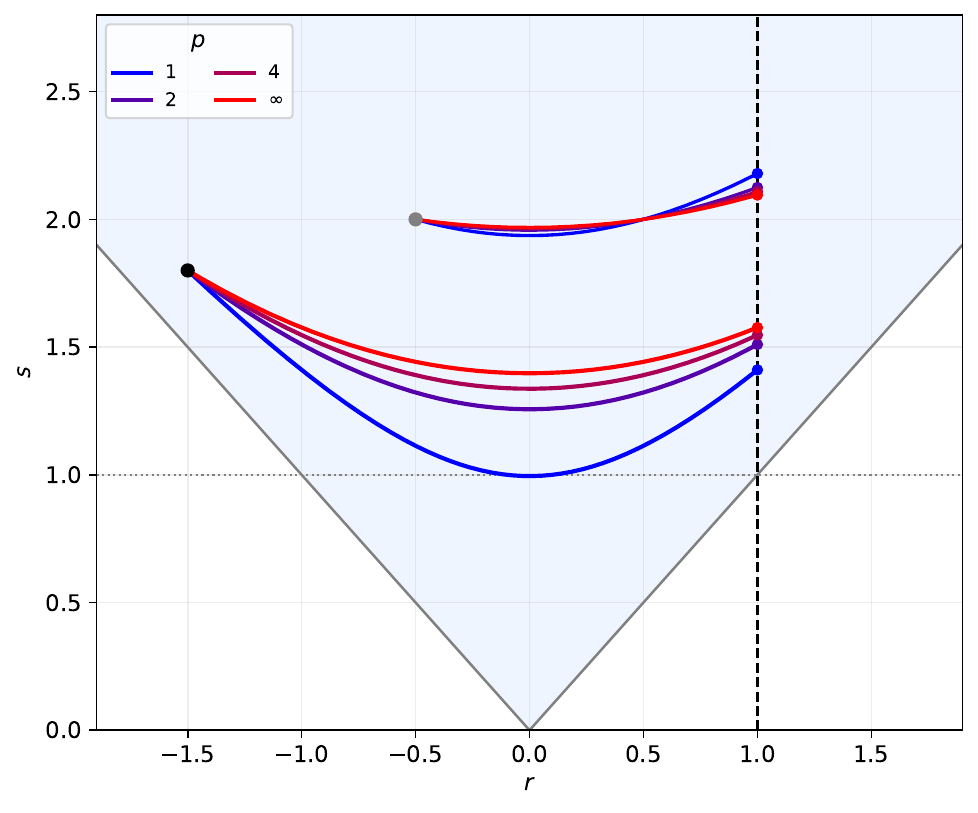}
&
\includegraphics[height=.13\textheight]{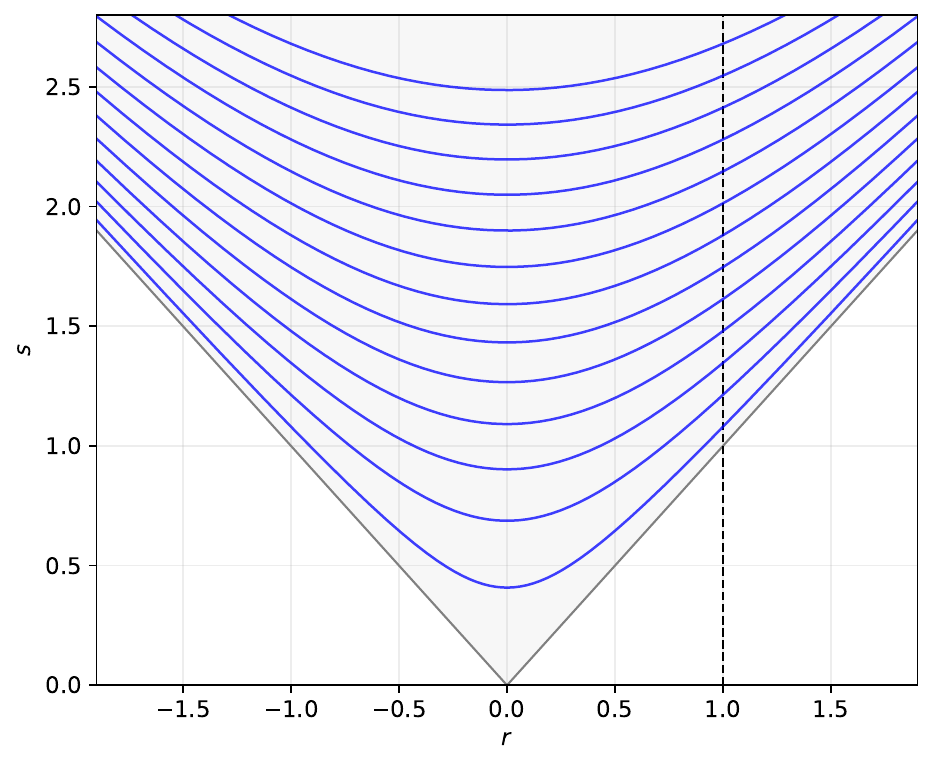}
&
\includegraphics[height=.13\textheight]{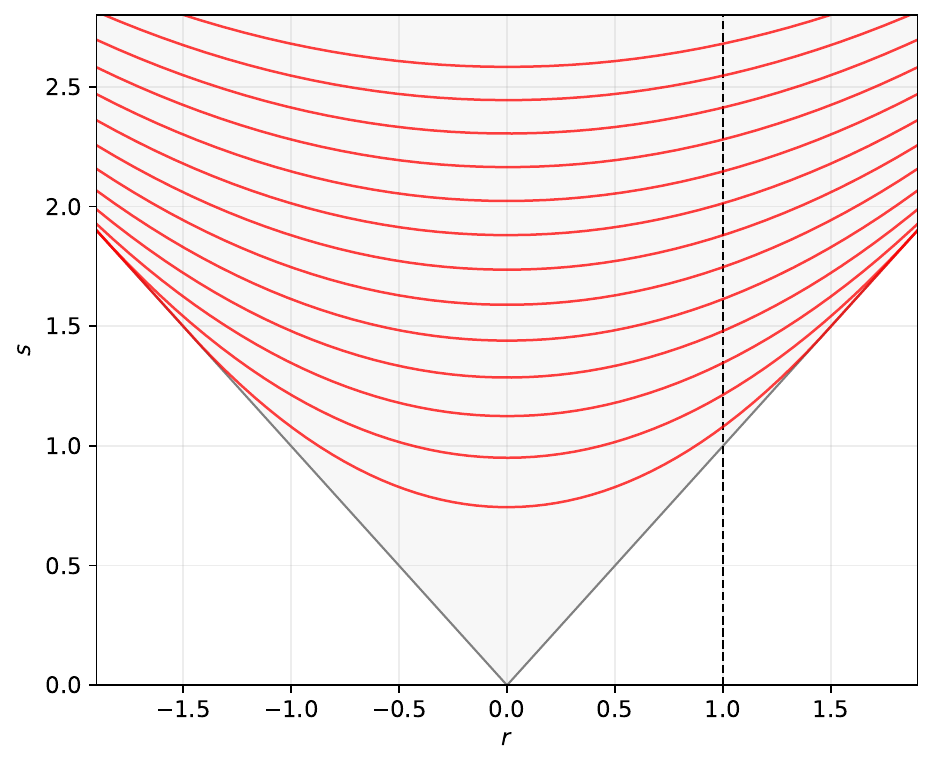}
&
\includegraphics[height=.13\textheight]{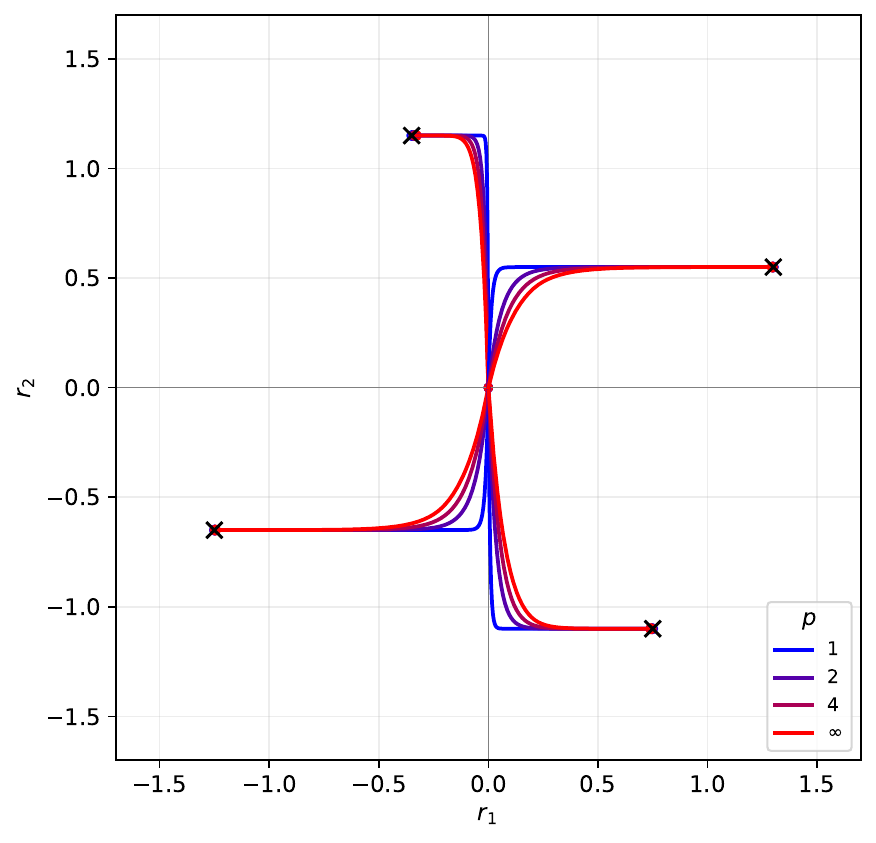}
\\[-1mm]
 (a) 1-mode trajectories
&
 (b) Leaves, $p=1$
&
 (c) Leaves, $p=\infty$
&
 (d) 2-mode \\[-2mm]
\end{tabular}
}
\caption{Closed-form Gaussian geometry. Panel (a): one-mode trajectories in $(r,s)$, for $p\in\{1,2,4,\infty\}$. Panels (b)--(c): conserved foliations for the two endpoint geometries. Panel (d): two-mode rank-two dynamics in the $(r_1,r_2)$ plane; all trajectories start from the white-noise state $r_1=r_2=0$, $(s_1,s_2)=(0.1,10)$, and each group targets a different $(\beta_1,\beta_2)$.}
\label{fig:gaussian-leaves}
\end{figure}

\paragraph{Geodesic convexity.}

Geodesic convexity, i.e., the condition
$\mathcal F(\mu_t)\le(1-t)\mathcal F(\mu_0)+t\mathcal F(\mu_1)$ along every
constant-speed $\Wg$-geodesic $(\mu_t)_{t\in[0,1]}$, is a central structural
notion for metric gradient flows \citep{AmbrosioGigliSavare2008}, as it
underlies stability and global convergence results. The following proposition
summarizes several basic convexity examples; Appendix~\ref{app:convexity}
contains the proofs and a refined $\kappa$-geodesic convexity analysis.
These results are, however, of limited direct use in the machine-learning
examples considered above, since the quadratic functionals arising in the MLP
and MMD settings are typically not geodesically convex. Note also that
geodesic convexity of the entropy is stated only along a selected geodesic:
uniqueness of geodesics may fail, in particular for $p=\infty$, which makes
the ``muon'' setting is more delicate.

\begin{proposition}[Basic geodesic convexity examples]
\label{prop:basic-geodesic-convexity}
Assume that $\gamma$ is monotone.
A linear functional $\mathcal F(\mu)=\int h\,d\mu$, with $h:\R^d\to\R$, is
$\Wg$-geodesically convex if and only if $h$ is convex.
A quadratic interaction functional
$\mathcal F(\mu)=\int h(x,y)\,d\mu(x)d\mu(y)$, with
$h:\R^d\times\R^d\to\R$, is $\Wg$-geodesically convex if $h$ is convex.
Finally, let $d\nu=Z^{-1}e^{-V}dx$, with $V\in C^2(\R^d)$ convex, and define
the relative entropy by $\mathcal F(\mu)=\int\log(d\mu/d\nu)\,d\mu$ if
$\mu\ll\nu$, and $\mathcal F(\mu)=+\infty$ otherwise. If the representing set
$\cK$ contains at least one positive definite matrix, then for every
$\mu_0,\mu_1\in\cP_2(\R^d)$ there exists a constant-speed $\Wg$-geodesic
$(\mu_t)_{t\in[0,1]}$ joining them along which $\mathcal F$ is convex.
\end{proposition}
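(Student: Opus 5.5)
Three claims need handling: linear, interaction, and entropy. The common tool is Corollary~\ref{cor:geodesic}, which says that every displacement interpolation $\mu_t=((1-t)x+ty)_\#\pi_*$ built on an optimal coupling $\pi_*$ for $\Wg(\mu_0,\mu_1)$ is a constant-speed geodesic. I will also use Theorem~\ref{thm:staticdynamic}, through the superposition argument in its proof sketch, to describe \emph{every} constant-speed geodesic.

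\textbf{Linear functionals, sufficiency.} If $h$ is convex, then along any geodesic of the form above
\[
\MF(\mu_t)=\int h((1-t)x+ty)\,d\pi_*\le(1-t)\MF(\mu_0)+t\MF(\mu_1).
\]
To cover \emph{every} constant-speed geodesic, I would show that each one is a displacement interpolation of a probability measure $\eta$ on curves. Represent the geodesic via the Benamou--Brenier minimizer and superposition. Equality in the Jensen step of the proof of Theorem~\ref{thm:staticdynamic} forces $\eta$-a.e.\ curve to have constant velocity in the seminorm $\langle Q^\star\cdot,\cdot\rangle$. In directions outside the range of $Q^\star$ (for instance when $p=\infty$), curves need not be straight. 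This is the main obstacle. I would settle for the class of geodesics given by Corollary~\ref{cor:geodesic}, reading the definition as convexity along those geodesics, or else argue via the constant-speed identity $\Wg(\mu_s,\mu_t)=|t-s|\Wg(\mu_0,\mu_1)$ combined with the max-min Theorem~\ref{thm:minmax}.

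\textbf{Linear functionals, necessity.} Take Dirac masses $\mu_0=\delta_a$ and $\mu_1=\delta_b$. The only coupling is $\delta_{(a,b)}$, so $\mu_t=\delta_{(1-t)a+tb}$ is a geodesic by Corollary~\ref{cor:geodesic}. Convexity of $\MF$ along it is exactly convexity of $h$ on the segment $[a,b]$.

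\textbf{Interaction functionals.} Along $\mu_t$ built from $\pi_*$,
\[
\MF(\mu_t)=\iint h\bigl((1-t)(x,x')+t(y,y')\bigr)\,d\pi_*(x,y)\,d\pi_*(x',y').
\]
Joint convexity of $h$ on $\R^{2d}$ then gives the inequality pointwise, and integrating yields the claim.

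\textbf{Entropy.} Fix an optimal saddle point $(Q^\star,\pi^\star)$ from Theorem~\ref{thm:minmax}. I would perturb $Q^\star$ to make it positive definite. Let $Q_\varepsilon=(1-\varepsilon)Q^\star+\varepsilon Q_0\in\cK$, where $Q_0\succ0$ is the positive definite element of $\cK$ assumed in the statement. Since $\WQ{Q_\varepsilon}$ is quadratic OT after the linear change of variables $x\mapsto Q_\varepsilon^{1/2}x$, Brenier's theorem applies when $\mu_0\ll$ Lebesgue (if not, $\MF=+\infty$ at an endpoint and there is nothing to prove). It provides a unique optimal map, along whose interpolation the entropy relative to $e^{-V}$ is convex. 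This uses McCann's displacement convexity: the change of variables contributes only a constant Jacobian, and $V$ stays convex.

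Two further points are needed: that this map's coupling is $\Wg$-optimal, and that this persists in the limit $\varepsilon\to0$. For $\varepsilon>0$, the maximizing $Q$ itself is used. Since $\WQ{Q}^2$ is concave in $Q$ on $\cK$ (it is an infimum of linear functions), take $Q^\star$ to be a maximizer. If it is already positive definite we are done. Otherwise, use that any $\pi$ optimal for $\WQ{Q^\star}$ which also attains the max-min value is $\Wg$-optimal. Then select among $\WQ{Q^\star}$-optimal couplings one minimizing $\WQ{Q_0}$: take the limit of $\WQ{Q^\star+\varepsilon Q_0}$ optimizers, which are straight-line maps, and use stability of optimal plans together with lower semicontinuity of entropy to pass convexity to the limit.
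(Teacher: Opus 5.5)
Your linear-functional argument (both directions) and your interaction argument via $\pi_*\otimes\pi_*$ are the paper's, and your worry about ``every'' geodesic is justified. Only your first option works, because the claim is false along arbitrary constant-speed geodesics. Take $\gamma=\lambda_{\max}$ on $\mathbb S_+^2$ and $\mu_t=\tfrac12(\delta_{(tL,\phi(t))}+\delta_{(tL,-\phi(t))})$ with $\phi(t)=L\min(t,1-t)$. Then $\Wg(\mu_s,\mu_t)=|t-s|L$: pairing the atoms gives the upper bound, and the first-coordinate mean gives the lower bound. Yet $h(x)=x_2^2$ is convex and $\MF(\mu_{1/2})=L^2/4>0=\MF(\mu_0)=\MF(\mu_1)$. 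So the statement must be read along displacement interpolations of optimal couplings. The paper's proof of Theorem~\ref{thm:linearconvex} does the same implicitly when it writes an arbitrary geodesic as $((1-t)x+ty)_\#\pi_*$.

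The entropy part has a genuine gap, at the step you flag yourself: $\Wg$-optimality of the selected coupling. Your argument is complete when $Q^\star$ is positive definite. When $Q^\star$ is singular, however, $c_{Q^\star}$-optimality does not imply $\Wg$-optimality. The latter requires $Q^\star\in\partial\gamma(\Sigma_\pi)$, which constrains $\Sigma_\pi$ in directions outside the range of $Q^\star$. For instance, the reflection coupling of Proposition~\ref{prop:entropy-counterexample} with $4a^2/3>L^2$ is optimal for $e_1e_1^\top$ but has $\lambda_{\max}=4a^2/3>\Wg^2$.

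Tie-breaking by $Q_0$ does not repair this. The limit of $\WQ{Q^\star+\varepsilon Q_0}$-optimizers minimizes $\tr(Q_0\Sigma_\pi)$ among $c_{Q^\star}$-optimal couplings, and that criterion is blind to $\gamma$. A concrete counterexample:
\begin{itemize}
\item \emph{Setup.} Let $d=3$, $\gamma=\lambda_{\max}$, $Q_0=\Id/3$, $\mu_0=\rho\otimes\alpha$ and $\mu_1=\rho(\cdot-L)\otimes\beta$. Here $\rho$ is a smooth density on $\R$, $\alpha=\mathcal N(0,\diag(1,\epsilon))$ on $\R^2$, $\beta$ is $\alpha$ rotated by $\theta$ with $\cos\theta=1/10$, $L^2=3/2$, and $\epsilon$ is small.
\item \emph{Value and certificate.} Translate the first coordinate and couple $\alpha,\beta$ independently. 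This gives a block-diagonal displacement covariance with blocks $L^2$ and $\mathrm{Cov}\,\alpha+\mathrm{Cov}\,\beta$, the latter with top eigenvalue $\approx1+\cos\theta=1.1$. Since every coupling has $e_1^\top\Sigma_\pi e_1\ge L^2$, we get $\Wg^2=L^2$. The top eigenvalue of this optimal covariance is simple, so $e_1e_1^\top$ is the only certificate.
\item \emph{Your selection.} For every $\varepsilon>0$ the cost $c_{Q^\star+\varepsilon Q_0}$ is separable. Its unique optimizer is therefore the translation times the $\Wtwo$-optimal coupling of $\alpha,\beta$. That coupling's transverse displacement covariance is close to $ww^\top$ with $w=(\cos\theta-1,\sin\theta)$, whose top eigenvalue is $\approx 2-2\cos\theta=1.8>L^2$.
\end{itemize}
So your limit coupling is not $\Wg$-optimal, although both endpoints have finite entropy. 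Its interpolation is not a $\Wg$-geodesic either: projecting onto $w$ gives $\Wg(\mu_0,\mu_{1/4})>\tfrac14\Wg(\mu_0,\mu_1)$ for small $\epsilon$.

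The missing idea, which is the paper's route (Theorem~\ref{thm:entropyweak}), is to regularize the gauge rather than the certificate. Set $\mathcal K_{\gamma,\varepsilon}=(1-\varepsilon)\cK+\varepsilon Q_0\subset\Sppd$, that is, $\gamma_\varepsilon=(1-\varepsilon)\gamma+\varepsilon\tr(Q_0\,\cdot)$. Every dual maximizer for $\mathsf W_{\gamma_\varepsilon}$ is then positive definite. By the saddle-point property, a $\mathsf W_{\gamma_\varepsilon}$-optimal $\pi_\varepsilon$ is optimal for a positive definite quadratic cost, so your change of variables plus McCann applies to its interpolation. Moreover $(1-\varepsilon)\gamma\le\gamma_\varepsilon\le\gamma$, and $\pi\mapsto\gamma(\Sigma_\pi)=\sup_{Q\in\cK}\int c_Q\,d\pi$ is weakly lower semicontinuous. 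Hence any weak limit $\pi_*$ satisfies $\gamma(\Sigma_{\pi_*})\le\liminf\gamma(\Sigma_{\pi_\varepsilon})\le\liminf\mathsf W_{\gamma_\varepsilon}(\mu_0,\mu_1)^2/(1-\varepsilon)\le\Wg(\mu_0,\mu_1)^2$, so $\pi_*$ is $\Wg$-optimal. Lower semicontinuity of the entropy then transfers the convexity inequality to its interpolation.
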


\if 0
\begin{proofsketch}
We only sketch the relative entropy case; the detailed proofs are given in
Appendix~\ref{app:convexity}. The only subtlety is that an active matrix in the
robust representation may be singular, preventing a direct reduction to usual
$\Wtwo$ displacement convexity by $x\mapsto Q^{1/2}x$.
%
First suppose that the active matrix $Q_*\in\cK$ is positive definite. Then a
$\Wg$-optimal coupling is optimal for the quadratic cost
$(y-x)^\top Q_*(y-x)$. With $L=Q_*^{1/2}$, the induced interpolation becomes an
ordinary $\Wtwo$-geodesic after pushforward by $L$. Since relative entropy is
invariant under the invertible map $L$, and $V\circ L^{-1}$ is convex, McCann's
displacement convexity theorem gives convexity of $\mathcal F$ along the
corresponding $\Wg$-geodesic.
%
For a general representing set, choose $Q_0\in\cK\cap\Sppd$ and regularize
$\cK$ by $\mathcal K_{\varepsilon}\coloneqq
(1-\varepsilon)\cK+\varepsilon Q_0$. The associated gauges satisfy
$\gamma_{\varepsilon}\to\gamma$ on $\mathbb S_+^d$, while the regularized active
matrices are positive definite. Applying the previous argument to
$\mathsf W_{\gamma_{\varepsilon}}$ and passing to a limit of optimal couplings,
using lower semicontinuity of entropy and convergence of the costs, yields a
constant-speed $\Wg$-geodesic along which $\mathcal F$ is convex.
\end{proofsketch}
\fi 

\section{Conclusion}
\label{sec:conclusion} 

The main message of this paper is that matrix-normalized optimizers for
mean-field neural models are naturally encoded by Spectral Wasserstein
distances. In this view, matrix normalization is not merely a preconditioning
device, but the local steepest-descent geometry generated by a spectral transport
cost. This clarifies the transport side, through coupling-based costs, geodesics,
and Gaussian reductions, and the optimization side, through a continuum
interpretation of normalized matrix updates.
The resulting picture is that the choice of spectral gauge shapes which
directions of motion are emphasized or suppressed in the mean-field dynamics,
thereby providing a geometric language for comparing matrix-normalized
optimizers. A major open question is the rigorous study of the Spectral
Wasserstein gradient-flow PDE: global-in-time existence, stability, and the
avoidance of local minima for shallow neural-network and MMD losses.

\section*{Acknowledgement}
The author thanks Tony Silveti-Falls, from whom he learned about the intricate details of Muon and with whom he had insightful discussions about Newton--Schulz approximation. This work was supported by the European Research Council (ERC project WOLF) and the French government under the management of Agence Nationale de la Recherche as part of the ``France 2030'' program, reference ANR-23-IACL-0008 (PRAIRIE-PSAI).

\bibliographystyle{plainnat}
\bibliography{references}

\appendix

\section{Matrix Recap and Explicit LMOs}
\label{app:matrix-recap}
This appendix collects the matrix facts used in Section~\ref{sec:flows}. The canonical polar representing set is $\cPol\coloneqq\{Q\in\mathbb S^d:\tr(QS)\le\gamma(S)\text{ for every }S\succeq0\}$; finite-dimensional convex duality gives $\gamma(S)=\max_{Q\in\cPol}\tr(QS)$ for every $S\succeq0$.

We first shows that the use of positive gauges on the cone
$\mathbb S_+^d$ is not essentially more restrictive than working with norms on
the full space of symmetric matrices: under monotonicity, every such gauge
admits a canonical norm extension.

\begin{proposition}[Norm extension of a monotone positive gauge]
\label{prop:gauge-norm-extension}
Let $\gamma:\mathbb S_+^d\to\R_+$ be a monotone positive gauge, i.e.,
$\gamma$ is convex, positively one-homogeneous, $\gamma(S)>0$ for every
nonzero $S\succeq0$, and $0\preceq S\preceq S'$ implies
$\gamma(S)\le\gamma(S')$. Define, for $A\in\mathbb S^d$,
\[
    \|A\|_\gamma
    \coloneqq
    \inf\Bigl\{
        \gamma(P)+\gamma(N):
        P,N\succeq0,\ A=P-N
    \Bigr\}.
\]
Then $\|\cdot\|_\gamma$ is a norm on $\mathbb S^d$, and its restriction to
$\mathbb S_+^d$ coincides with $\gamma$.
\end{proposition}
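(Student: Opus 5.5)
We check the norm axioms directly on the infimal-decomposition formula, then show that the restriction to $\mathbb S_+^d$ is exactly $\gamma$.

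\textbf{Well-definedness and easy axioms.} The infimum is over a nonempty set: the spectral decomposition $A=A_+-A_-$ gives one admissible pair. It is finite and nonnegative because $\gamma\ge0$.

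\emph{Positive homogeneity.} For $t>0$, the admissible pairs for $tA$ are exactly $(tP,tN)$ with $(P,N)$ admissible for $A$. One-homogeneity of $\gamma$ then gives $\|tA\|_\gamma=t\|A\|_\gamma$. The case $t=0$ is trivial, since $(0,0)$ is admissible.

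\emph{Symmetry.} If $A=P-N$ then $-A=N-P$, so $\|-A\|_\gamma=\|A\|_\gamma$. Together with homogeneity this gives $\|tA\|_\gamma=|t|\|A\|_\gamma$ for all real $t$.

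\emph{Triangle inequality.} Take $A=P-N$ and $B=P'-N'$. Then $A+B=(P+P')-(N+N')$, with $P+P'$ and $N+N'$ positive semidefinite. Since $\gamma$ is convex and one-homogeneous, it is subadditive. Hence
\[
\gamma(P+P')+\gamma(N+N')\le \gamma(P)+\gamma(N)+\gamma(P')+\gamma(N').
\]
Taking infima over both decompositions gives $\|A+B\|_\gamma\le\|A\|_\gamma+\|B\|_\gamma$.

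\textbf{Restriction to the cone.} Let $S\succeq0$. The decomposition $(S,0)$ is admissible, so $\|S\|_\gamma\le\gamma(S)+\gamma(0)=\gamma(S)$.

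For the reverse inequality, monotonicity is essential. Any admissible decomposition satisfies $S=P-N$, hence $P=S+N\succeq S\succeq0$. Monotonicity gives $\gamma(P)\ge\gamma(S)$, and therefore $\gamma(P)+\gamma(N)\ge\gamma(S)$. So $\|S\|_\gamma=\gamma(S)$.

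\textbf{Definiteness: the main obstacle.} We need $\|A\|_\gamma=0$ to force $A=0$. The plan is to compare $\gamma$ with the trace on the cone.

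\emph{Step 1: a lower trace bound.} The set $\{S\succeq0:\tr S=1\}$ is compact. The gauge $\gamma$ is convex and finite on $\mathbb S_+^d$, and I would first establish that it is continuous, or at least lower semicontinuous, on this compact base. If $\gamma$ is already given as the restriction of a norm, continuity is automatic; in the abstract setting one can use that a finite convex function on a polytope-like closed cone is upper semicontinuous and argue directly. Since $\gamma>0$ on this base, one then gets a constant $c>0$ with $\gamma(S)\ge c\,\tr S$ for every $S\succeq0$.

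\emph{Step 2: a lower bound by the trace norm.} For any admissible decomposition $A=P-N$,
\[
\gamma(P)+\gamma(N)\ge c(\tr P+\tr N)\ge c\,\|P-N\|_{S_1}=c\,\|A\|_{S_1}.
\]
The middle inequality uses $\|P\|_{S_1}=\tr P$ and $\|N\|_{S_1}=\tr N$ for positive semidefinite matrices, together with the triangle inequality for the trace norm. Taking the infimum gives $\|A\|_\gamma\ge c\|A\|_{S_1}$, so $\|A\|_\gamma=0$ implies $A=0$.

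The only delicate point is the lower semicontinuity of $\gamma$ needed to extract $c>0$ in Step 1. I would handle it, if needed, by replacing $\gamma$ with its closure on the cone: a convex function and its closure coincide on the relative interior, and monotonicity lets one push boundary points toward the interior. Once $c>0$ is available, definiteness is immediate.
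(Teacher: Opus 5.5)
Your proposal is correct and follows the paper's proof essentially step for step. The shared steps are nonemptiness via the spectral decomposition, the triangle inequality via subadditivity, the restriction identity via $P=S+N\succeq S$ and monotonicity, and definiteness via a bound $\gamma\ge c\,\tr$ on the cone; your trace-norm triangle inequality $\|P-N\|_{S_1}\le\tr P+\tr N$ is equivalent to the paper's use of $P+N\succeq\pm A$.

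You are right that extracting $c>0$ requires lower semicontinuity of $\gamma$, which the paper asserts without comment. Your polyhedral upper-semicontinuity remark does not apply to $\mathbb S_+^d$, which is not polyhedral, and upper semicontinuity is not what is needed anyway. Your monotonicity idea does close the point in one line. If $S_n\to S$ on the trace-one base, then $\gamma(S)\le\gamma(S+\varepsilon I)=\lim_n\gamma(S_n+\varepsilon I)\le\liminf_n\gamma(S_n)+\varepsilon\gamma(I)$. This uses monotonicity, continuity of the finite convex function $\gamma$ on the open cone $\mathbb S_{++}^d$, and subadditivity. Letting $\varepsilon\downarrow0$ gives the needed lower semicontinuity.
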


\begin{proof}
We first note that the admissible set is nonempty for every
$A\in\mathbb S^d$: for instance, the spectral decomposition
$A=A_+-A_-$ gives $A_+,A_-\succeq0$. Hence $\|A\|_\gamma$ is well-defined and
finite.

Positive homogeneity follows directly from the definition. If $a\ge0$, then
\[
    \|aA\|_\gamma
    =
    \inf_{A=P-N}
    \{\gamma(aP)+\gamma(aN)\}
    =
    a\|A\|_\gamma.
\]
If $a<0$, then $aA=(-a)N-(-a)P$ whenever $A=P-N$, so the same argument gives
$\|aA\|_\gamma=|a|\|A\|_\gamma$. In particular, $\|-A\|_\gamma=\|A\|_\gamma$.

We next prove the triangle inequality. Let $A=P_1-N_1$ and $B=P_2-N_2$ with
$P_i,N_i\succeq0$. Then
\[
    A+B=(P_1+P_2)-(N_1+N_2),
\]
and $P_1+P_2,N_1+N_2\succeq0$. By subadditivity of $\gamma$, which follows
from convexity and one-homogeneity,
\[
    \gamma(P_1+P_2)+\gamma(N_1+N_2)
    \le
    \gamma(P_1)+\gamma(P_2)+\gamma(N_1)+\gamma(N_2).
\]
Taking the infimum over all decompositions of $A$ and $B$ yields
$\|A+B\|_\gamma\le\|A\|_\gamma+\|B\|_\gamma$.

It remains to prove nondegeneracy. Since $\gamma$ is strictly positive on the
compact section $\{S\succeq0:\tr(S)=1\}$, there exists $c>0$ such that
$\gamma(S)\ge c\,\tr(S)$ for every $S\succeq0$. Therefore, for every
decomposition $A=P-N$,
\[
    \gamma(P)+\gamma(N)
    \ge
    c\,\tr(P+N).
\]
Moreover, from $A=P-N$ we have $P+N\succeq A$ and $P+N\succeq -A$, which
implies $\tr(P+N)\ge \tr|A|$. Hence
\[
    \gamma(P)+\gamma(N)\ge c\,\tr|A|.
\]
Taking the infimum gives $\|A\|_\gamma\ge c\,\tr|A|$. Thus
$\|A\|_\gamma=0$ implies $A=0$.

Finally, let $S\succeq0$. The decomposition $S=S-0$ gives
$\|S\|_\gamma\le\gamma(S)$. Conversely, if $S=P-N$ with $P,N\succeq0$, then
$P=S+N\succeq S$, and by monotonicity,
$\gamma(P)\ge\gamma(S)$. Hence
\[
    \gamma(P)+\gamma(N)\ge\gamma(P)\ge\gamma(S).
\]
Taking the infimum over all such decompositions gives
$\|S\|_\gamma\ge\gamma(S)$. Therefore $\|S\|_\gamma=\gamma(S)$ for every
$S\succeq0$.
\end{proof}

The next proposition characterizes the monotone norms for which representing sets may be chosen inside the positive-semidefinite cone.
\begin{proposition}[PSD representation and monotonicity]
\label{prop:psdpolar}
The following are equivalent:
\begin{itemize}
\item $\gamma$ is monotone on $\mathbb S_+^d$, namely
$$
0\preceq S\preceq T \Longrightarrow \gamma(S)\le \gamma(T);
$$
\item there exists a convex compact representing set $\cK\subset \mathbb S_+^d$ such that
$$
\gamma(S)=\max_{Q\in \cK}\tr(QS)
\qquad\text{for every }S\succeq 0.
$$
\end{itemize}
Moreover, when these properties hold, the canonical choice
$$
\cK=\cPol\cap \mathbb S_+^d
$$
is admissible.
\end{proposition}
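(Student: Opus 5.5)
The easy direction is (ii)$\Rightarrow$(i). Suppose $\gamma(S)=\max_{Q\in\cK}\tr(QS)$ with $\cK\subset\mathbb S_+^d$, and let $0\preceq S\preceq T$. For every $Q\in\cK$ we have $\tr(Q(T-S))\ge0$, since the trace pairing of two positive semidefinite matrices is nonnegative. Hence $\tr(QS)\le\tr(QT)\le\gamma(T)$. Taking the maximum over $Q$ gives $\gamma(S)\le\gamma(T)$.

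For (i)$\Rightarrow$(ii) and the "moreover" claim, I will show directly that $\cK\coloneqq\cPol\cap\mathbb S_+^d$ represents $\gamma$. The set $\cPol$ is convex and closed, being an intersection of half-spaces. It is also bounded, because $\gamma(S)\le C\tr(S)$ and the constraints with $S=uu^\top$ bound $u^\top Qu$ from above. The lower bound comes from the reduction below. Intersecting with the closed cone $\mathbb S_+^d$ keeps $\cK$ convex and closed, and it is bounded since $0\preceq Q$ and $\tr(Q)=\tr(Q\Id)\le\gamma(\Id)$. So $\cK$ is compact and convex. Every $Q\in\cK$ satisfies $\tr(QS)\le\gamma(S)$ by definition of $\cPol$. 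It therefore remains to find, for each $S\succeq0$, some $Q\in\cK$ with $\tr(QS)=\gamma(S)$.

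The key step uses the canonical norm extension $\|\cdot\|_\gamma$ from Proposition~\ref{prop:gauge-norm-extension}. That result relies on monotonicity and gives a norm on $\mathbb S^d$ that agrees with $\gamma$ on $\mathbb S_+^d$.

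1. Fix $S\succeq0$. By Hahn--Banach (finite-dimensional norm duality), there is a $Q\in\mathbb S^d$ with $\|Q\|_{\gamma,*}\le1$ and $\tr(QS)=\|S\|_\gamma=\gamma(S)$. In particular $\tr(QT)\le\|T\|_\gamma=\gamma(T)$ for all $T\succeq0$, so $Q\in\cPol$.
2. Show $Q\succeq0$. This is the main obstacle, and it is exactly where monotonicity enters, through the structure of $\|\cdot\|_\gamma$. For $N\succeq0$, the decomposition $-N=0-N$ gives $\|S-N\|_\gamma\le\gamma(S)+\gamma(N)$. That alone is not enough, so I would use a sharper estimate instead.
3. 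For $t>0$ and a unit vector $u$, decompose $S-tuu^\top=S-(tuu^\top)$. If this gave $\|S-tuu^\top\|_\gamma\le\gamma(S)$, then $\tr(QS)-t\,u^\top Qu\le\gamma(S)$, i.e.\ $u^\top Qu\ge0$, which is what we want. The estimate does not follow immediately, however.
4. A cleaner route is to replace $Q$ by its positive part $Q_+$. Write $Q=Q_+-Q_-$ and let $\Pi$ be the spectral projector onto the nonnegative eigenspace of $Q$. For $T\succeq0$, the matrix $\Pi T\Pi$ is again positive semidefinite, and $\tr(Q_+T)=\tr(Q\,\Pi T\Pi)$. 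The latter is at most $\gamma(\Pi T\Pi)$. Monotonicity gives $\gamma(\Pi T\Pi)\le\gamma(T)$, provided we have the pinching inequality $\gamma(\Pi T\Pi)\le\gamma(T)$.
5. Pinching does not follow from monotonicity alone, since $\Pi T\Pi\not\preceq T$ in general. I would instead write $\Pi T\Pi+\Pi^\perp T\Pi^\perp=\tfrac12(T+UTU)$ with the unitary $U=\Pi-\Pi^\perp$. Convexity then bounds the $\gamma$ of the pinched matrix by $\tfrac12(\gamma(T)+\gamma(UTU))$. Since $0\preceq\Pi T\Pi\preceq\Pi T\Pi+\Pi^\perp T\Pi^\perp$, monotonicity finishes the argument, but only if $\gamma(UTU)=\gamma(T)$. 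That holds for unitarily invariant gauges such as the Schatten and Ky Fan examples.
6. For a general monotone $\gamma$, I would fall back on a separation argument. Suppose $\gamma(S)>\sup_{Q\in\cK}\tr(QS)$. Then separate $S$ from the convex set $\{T:\ \exists\,T'\succeq0,\ T\preceq T',\ \gamma(T')\le1\}$, which is the monotone hull of the unit ball. Monotonicity says this hull meets $\mathbb S_+^d$ only inside the unit ball, and a separating functional for a downward-closed set can be taken positive semidefinite.

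Finally, with $Q_+\in\cK$ we get $\tr(Q_+S)\ge\tr(QS)=\gamma(S)$ when $Q_+$ is obtained by positive-part truncation, or directly from the separating functional in the general case. Together with the upper bound $\tr(Q_+S)\le\gamma(S)$ from $Q_+\in\cPol$, this gives equality and completes the representation.
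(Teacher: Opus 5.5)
Your argument is correct in the end, via step 6, and that step takes a genuinely different route from the paper's.

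\textbf{The paper's route.} The paper argues locally. For $S\succ0$ it takes a subgradient $Q$ of $\gamma$ at $S$. One-homogeneity then gives $\tr(QS)=\gamma(S)$ and $\tr(QT)\le\gamma(T)$ on $\mathbb S_+^d$. Since $S-\varepsilon R\succeq0$ for small $\varepsilon$, the subgradient inequality plus monotonicity yield $\tr(QR)\ge0$ for all $R\succeq0$. Singular $S$ is then handled through $S+\varepsilon I$, continuity, and compactness of $\cK$.

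\textbf{Your step 3 already works.} It is precisely this argument, once you restrict to $S\succ0$. For a unit vector $u$ and $0<t\le\lambda_{\min}(S)$, the matrix $S-tuu^\top$ is PSD. So $\|S-tuu^\top\|_\gamma=\gamma(S-tuu^\top)\le\gamma(S)$ by monotonicity, whence $u^\top Qu\ge0$. The obstruction you hit arises only in the singular case, which the approximation removes. Steps 4--5, which as you note need unitary invariance, can therefore be dropped.

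\textbf{Your step 6.} This is a global separation argument instead. It treats singular $S$ directly, without approximation, and shows that PSD certificates are exactly the dual of the downward-closedness that monotonicity provides. To complete it:
\begin{itemize}
\item Separate a rescaled point $\tilde S=S/\lambda$, with $\max_{Q\in\cK}\tr(QS)<\lambda<\gamma(S)$, rather than $S$ itself. Separate it from $D=B_+-\mathbb S_+^d$, where $B_+=\{T\succeq0:\gamma(T)\le1\}$.
\item $D$ is closed (a compact set plus a closed cone), so strict separation by some $P$ applies.
\item $\tilde S\notin D$, because $D\cap\mathbb S_+^d=B_+$ by monotonicity.
\item Boundedness of $T\mapsto\tr(PT)$ from above on $D$ forces $P\succeq0$. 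Hence its supremum over $D$ equals its supremum $h$ over $B_+$, and $h>0$ since $P\ne0$.
\item Then $P/h\in\cPol\cap\mathbb S_+^d$ and $\tr(PS)/h>\lambda$, a contradiction.
\end{itemize}

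One small slip: $\cPol$ itself is not bounded, since it contains $-\mathbb S_+^d$. Only $\cPol\cap\mathbb S_+^d$ is bounded, and that is all you use.
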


\begin{proof}
If $\cK\subset \mathbb S_+^d$, then for $0\preceq S\preceq T$ one has $\tr(QS)\le \tr(QT)$ for every $Q\in\cK$, hence $\gamma(S)\le \gamma(T)$ by taking suprema.

Conversely, assume $\gamma$ is monotone. Define
$$
\cK^{+}\coloneqq
\{Q\in\mathbb S_+^d:\ \tr(QS)\le \gamma(S)\text{ for every }S\succeq0\}.
$$
It is immediate that $\cK^{+}$ is convex, closed, bounded, and contained in $\cPol\cap\mathbb S_+^d$. We claim that it represents $\gamma$. Fix $S\succeq0$. If $S\succ0$, choose a subgradient $Q\in\partial\gamma(S)$ with respect to the ambient vector space of symmetric matrices. Since $\gamma$ is convex and one-homogeneous, $\tr(QS)=\gamma(S)$ and $\tr(QT)\le\gamma(T)$ for every $T\succeq0$. Moreover $Q\succeq0$: indeed, for every $R\succeq0$ and every sufficiently small $\varepsilon>0$ such that $S-\varepsilon R\succeq0$, the subgradient inequality gives
$$
\gamma(S-\varepsilon R)\ge \gamma(S)-\varepsilon\tr(QR),
$$
while monotonicity gives $\gamma(S-\varepsilon R)\le\gamma(S)$. Hence $\tr(QR)\ge0$ for all $R\succeq0$, i.e. $Q\succeq0$. Thus $Q\in\cK^{+}$ and $\gamma(S)\le\sup_{P\in\cK^{+}}\tr(PS)$. The reverse inequality follows from the definition of $\cK^{+}$. For singular $S\succeq0$, apply the previous argument to $S+\varepsilon I$ and let $\varepsilon\downarrow0$ using continuity of the norm. Therefore $\gamma(S)=\max_{Q\in\cK^{+}}\tr(QS)$ for every $S\succeq0$. Convexity and compactness follow from finite dimensionality.
\end{proof}

\begin{proposition}[Norm property of $\mN$]\label{prop:N-norm}
Assume that $\gamma$ is monotone and admits a positive semidefinite
representing set $\cK\subset\mathbb S_+^d$, so that
$\gamma(S)=\max_{Q\in\cK}\tr(QS)$ for every $S\succeq0$. Define
$\mN(V)\coloneqq\gamma(V^\top V)^{1/2}$. Then $\mN$ is a seminorm. If, in
addition, $\gamma(S)>0$ for every nonzero $S\succeq0$, then $\mN$ is a norm.
\end{proposition}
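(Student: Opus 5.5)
The plan is to write $\mN$ as a supremum of Euclidean-type seminorms, using the positive semidefinite representing set. For $V\in\R^{n\times d}$ and $Q\in\cK\subset\mathbb S_+^d$ we have $\tr(QV^\top V)=\norm{VQ^{1/2}}_F^2$. Therefore
\[
\mN(V)=\max_{Q\in\cK}\norm{VQ^{1/2}}_F ,
\]
where we used that $\gamma(V^\top V)=\max_{Q\in\cK}\tr(QV^\top V)$ because $V^\top V\succeq0$, and that the square root is increasing. The representation is exactly where $\cK\subset\mathbb S_+^d$ matters, since $Q^{1/2}$ must exist. Such a set is available by Proposition~\ref{prop:psdpolar} under monotonicity.

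The seminorm properties then follow directly. Each map $V\mapsto\norm{VQ^{1/2}}_F$ is a seminorm on $\R^{n\times d}$: it is a Frobenius norm composed with the linear map $V\mapsto VQ^{1/2}$. Absolute homogeneity passes to the maximum immediately. For the triangle inequality, fix $V,W$ and pick $Q$ attaining the maximum for $V+W$; compactness of $\cK$ together with continuity in $Q$ guarantees such a $Q$ exists. Then
\[
\mN(V+W)=\norm{(V+W)Q^{1/2}}_F\le\norm{VQ^{1/2}}_F+\norm{WQ^{1/2}}_F\le\mN(V)+\mN(W).
\]
Nonnegativity and finiteness are clear, so $\mN$ is a seminorm.

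For definiteness, assume $\gamma(S)>0$ for every nonzero $S\succeq0$. If $\mN(V)=0$, then $\gamma(V^\top V)=0$, which forces $V^\top V=0$, hence $\norm{V}_F^2=\tr(V^\top V)=0$ and $V=0$.

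The main obstacle is only the first reduction: the identity $\tr(QV^\top V)=\norm{VQ^{1/2}}_F^2$ needs $Q\succeq0$. This is the reason monotonicity is required. Without a PSD representing set, the individual terms $\tr(QV^\top V)$ need not be squares of seminorms, and the triangle inequality can fail. The rest of the argument is routine.
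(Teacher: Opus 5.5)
Your proof is correct and follows the paper's argument essentially verbatim: you write $\mN(V)=\max_{Q\in\cK}\|VQ^{1/2}\|_F$ as a maximum of seminorms, which uses $Q\succeq0$, and then give the same definiteness argument via $V^\top V=0$. The only cosmetic difference is in the triangle inequality, where you pick a maximizing $Q$ for $V+W$, whereas the paper bounds the maximum of a sum by the sum of the maxima.
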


\begin{proof}
For every matrix $V$, one has
\[
    \mN(V)^2
    =
    \gamma(V^\top V)
    =
    \max_{Q\in\cK}\tr(QV^\top V)
    =
    \max_{Q\in\cK}\tr(VQV^\top).
\]
Since $Q\succeq0$, we may write
$\tr(VQV^\top)=\|VQ^{1/2}\|_F^2$. Hence
\[
    \mN(V)
    =
    \max_{Q\in\cK}\|VQ^{1/2}\|_F.
\]
For each fixed $Q\in\cK$, the map $V\mapsto\|VQ^{1/2}\|_F$ is a seminorm.
Therefore, for any matrices $V,W$ and scalar $a$,
\[
    \mN(aV)
    =
    \max_{Q\in\cK}\|aVQ^{1/2}\|_F
    =
    |a|\mN(V),
\]
and
\[
    \mN(V+W)
    =
    \max_{Q\in\cK}\|(V+W)Q^{1/2}\|_F
    \le
    \max_{Q\in\cK}
    \bigl(\|VQ^{1/2}\|_F+\|WQ^{1/2}\|_F\bigr)
    \le
    \mN(V)+\mN(W).
\]
Thus $\mN$ is a seminorm. If moreover $\gamma(S)>0$ for every nonzero
$S\succeq0$, then $\mN(V)=0$ implies $\gamma(V^\top V)=0$, hence
$V^\top V=0$, and therefore $V=0$. Thus $\mN$ is a norm.
\end{proof}

\begin{remark}[$\mathcal K_\gamma$ for the Schatten $p$-norms]\label{rem-kgamma-shatten}
For $\gamma=\gamma_p$, a convenient supporting set is obtained from the dual
Schatten unit ball. Namely, if $p\in[1,\infty]$ and $q\in[1,\infty]$ is the
conjugate exponent, $1/p+1/q=1$, one may take
\[
    \mathcal K_{\gamma_p}
    =
    \mathcal P_{\gamma_p}\cap\mathbb S_+^d
    \coloneqq
    \Bigl\{
        G\in\mathbb S_+^d:
        \|G\|_{S_q}\leq 1
    \Bigr\}.
\]
One can, however, choose a smaller supporting set in the endpoint cases.
For $p=1$, since $\|A\|_{S_1}=\operatorname{tr}(A)$ on $\mathbb S_+^d$, it is
enough to take $\mathcal K_{\gamma_1}=\{I\}$. For $p=\infty$, one may take the
set of density matrices,
\[
    \mathcal K_{\gamma_\infty}
    =
    \Bigl\{
        G\in\mathbb S_+^d:
        \operatorname{tr}(G)=1
    \Bigr\}.
\]
\end{remark}

The next proposition gives the explicit LMO associated with each Schatten geometry.
\begin{proposition}[Explicit Schatten selectors]
\label{prop:schatten}
Let $\gamma=\gamma_p=\norm{\cdot}_{S_p}$, let $r=2p$, and let $q=\frac{r}{r-1}=\frac{2p}{2p-1}$ be the dual exponent. If
$$
G=U\diag(\sigma_i)W^\top
$$
is the singular value decomposition of a gradient matrix, then
$$
\mJ_p(G)
=
-\norm{G}_{S_q}^{2-q}\,
U\diag(\sigma_i^{q-1})W^\top.
$$
In particular,
\begin{itemize}
\item $p=1$: $\mJ_1(G)=-G$, which recovers ordinary gradient flow;
\item $p=2$: $\mJ_2(G)=-\norm{G}_{S_{4/3}}^{2/3}\,U\diag(\sigma_i^{1/3})W^\top$, which we call the Frobenius-intermediate normalization;
\item $p=\infty$:
\begin{equation}
\label{eq:muon-selector}
\mJ_\infty(G)
=
-\norm{G}_{S_1}\,UW^\top
=
-\tr((G^\top G)^{1/2})\,G\,(G^\top G)^{\dagger/2},
\end{equation}
which is the Muon normalization.
\end{itemize}
\end{proposition}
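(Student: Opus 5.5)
The plan is to reduce the matrix LMO to a scalar problem on singular values, using duality between Schatten norms and von Neumann's trace inequality. First, Definition~\ref{def:monotone-norm} gives $\mN(V)=\gamma_p(V^\top V)^{1/2}=\norm{V}_{S_{2p}}=\norm{V}_{S_r}$, because the eigenvalues of $V^\top V$ are the squared singular values of $V$. The LMO objective $\langle G,V\rangle_F+\frac12\norm{V}_{S_r}^2$ is strictly convex in $V$ for $1<r<\infty$ (the square of a uniformly convex norm), and merely convex at the endpoints $r=2$ and $r=\infty$.

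\textbf{Step 1: split into direction and magnitude.} I would write $V=-t\,D$ with $t\ge0$ and $\norm{D}_{S_r}=1$. The objective then becomes $-t\langle G,D\rangle_F+\frac12t^2$. For fixed $D$, the optimal magnitude is $t=\langle G,D\rangle_F$. It therefore suffices to maximize $\langle G,D\rangle_F$ over the $S_r$ unit sphere. The maximal value is the dual norm $\norm{G}_{S_q}$ with $q=r/(r-1)$, so the optimal magnitude will be $t=\norm{G}_{S_q}$.

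\textbf{Step 2: find the maximizing direction.} By von Neumann's trace inequality, $\langle G,D\rangle_F\le\sum_i\sigma_i(G)\sigma_i(D)$, with equality when $D$ shares the singular vectors $U,W$ of $G$. This reduces the problem to the vector problem of maximizing $\sum_i\sigma_i\tau_i$ subject to $\norm{\tau}_{\ell_r}=1$. Hölder's equality case gives $\tau_i=\sigma_i^{q-1}/\norm{\sigma}_{\ell_q}^{q-1}$, using $(q-1)r=q$. Combining this with $t=\norm{G}_{S_q}$ yields
\[
V=-\norm{G}_{S_q}\cdot\norm{G}_{S_q}^{1-q}\,U\diag(\sigma_i^{q-1})W^\top=-\norm{G}_{S_q}^{2-q}\,U\diag(\sigma_i^{q-1})W^\top,
\]
which is the claimed formula. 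As a sanity check, substituting back gives the optimal value $-\frac12\norm{G}_{S_q}^2$.

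\textbf{Step 3: the special cases.} For $p=1$ ($r=2$, $q=2$), the formula gives $V=-G$. For $p=2$, one has $q=4/3$, so $2-q=2/3$ and $q-1=1/3$. The case $p=\infty$ is the main subtlety. Here $r=\infty$ and $q=1$, so $\sigma_i^{0}$ must be read as $1$ on the nonzero singular values and $0$ on the kernel. The resulting $UW^\top$ is the partial isometry $G(G^\top G)^{\dagger/2}$, and $\norm{G}_{S_1}=\tr((G^\top G)^{1/2})$.

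The obstacle I expect is this endpoint, because the maximizer of $\langle G,D\rangle$ over the operator-norm ball is not unique when $G$ is rank-deficient. Any $D$ acting as $UW^\top$ on the range and arbitrarily with norm $\le1$ on the complement is also optimal. I would handle this by checking directly that the partial isometry attains the value $\norm{G}_{S_1}$. This is enough, because Definition~\ref{def:matrix-lmo} only requires $\mJ_\gamma(G)$ to be \emph{some} minimizer. The case $G=0$ is trivial.
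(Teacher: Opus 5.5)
Your argument is correct, but it takes a different route from the paper. The paper's proof is a one-line appeal to convex conjugacy. Since $\frac12\norm{\cdot}_{S_r}^2$ and $\frac12\norm{\cdot}_{S_q}^2$ are Fenchel conjugates, the minimizers of $\langle G,V\rangle_F+\frac12\norm{V}_{S_r}^2$ form the set $-\partial\bigl(\frac12\norm{\cdot}_{S_q}^2\bigr)(G)$. The displayed formula is then just the gradient of $\frac12\norm{G}_{S_q}^2$ for $1<q\le 2$, and the case $q=1$ is read as a subgradient selection.

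That route is shorter and gives more structure for free. It yields uniqueness of the LMO for $1\le p<\infty$, because the conjugate is differentiable. At $p=\infty$ it gives the full set of minimizers, $-\norm{G}_{S_1}\partial\norm{G}_{S_1}$, which is exactly the family you describe. However, it relies without comment on the gradient formula for unitarily invariant functions, $\nabla(f\circ\sigma)(G)=U\diag(\nabla f(\sigma))W^\top$, which itself rests on von Neumann's inequality.

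Your approach splits $V$ into direction and magnitude and then applies von Neumann's inequality together with the equality case of H\"older's inequality. This is elementary and self-contained. A single inequality chain proves the Schatten dual-norm identity, exhibits the minimizer, and gives the optimal value $-\frac12\norm{G}_{S_q}^2$. Your direct check at $p=\infty$, together with the remark that Definition~\ref{def:matrix-lmo} only asks for \emph{some} minimizer, spells out what the paper's phrase ``subgradient LMO'' leaves implicit.

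There are two small slips, neither of which affects the conclusion. First, for fixed $D$ the optimal magnitude is $\max\{\langle G,D\rangle_F,0\}$, not $\langle G,D\rangle_F$. This is harmless because the maximum over $D$ is $\norm{G}_{S_q}\ge0$. Second, $r=2$ is not a degenerate endpoint: $\frac12\norm{V}_F^2$ is strongly convex, and strict convexity fails only at $r=\infty$.
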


\begin{proof}
The displayed formula is the negative gradient of $\frac12\norm{G}_{S_q}^2$ when $1<q\le 2$, while the endpoint $q=1$ is interpreted through a subgradient LMO.
\end{proof}

\subsection{Proofs for measure LMOs}
\paragraph{Proof of Theorem~\ref{thm:selector-structure-paper}.}
\begin{proof}
For $Q\in\cK\cap\Sppd$, define
$$
\Phi_Q(v)\coloneqq \int g\cdot v\,d\mu+\frac12\int v^\top Qv\,d\mu.
$$
Since $\gamma(R)=\max_{Q\in\cK}\tr(QR)$ for $R\succeq0$, the Measure-LMO objective is
$$
\inf_v\max_{Q\in\cK}\Phi_Q(v).
$$
For fixed positive definite $Q$, the inner minimization is strictly convex and gives
$$
v_Q(x)=-Q^{-1}g(x),
\qquad
\inf_v\Phi_Q(v)=-\frac12\tr\!\bigl(Q^{-1}\mathcal{S}_\mu(g)\bigr).
$$
The saddle point condition is therefore obtained by choosing $Q_\mu^*$ to maximize this dual value, equivalently to minimize $\tr((Q_\mu^*)^{-1}\mathcal{S}_\mu(g))$. With $v_*(x)=-(Q_\mu^*)^{-1}g(x)$, convex duality gives
$$
\int g\cdot v_*\,d\mu+\frac12\gamma\!\left(\int v_*v_*^\top\,d\mu\right)
=
-\frac12\tr\!\bigl((Q_\mu^*)^{-1}\mathcal{S}_\mu(g)\bigr),
$$
which equals the dual optimum. Thus, $v_*$ minimizes the Measure-LMO objective. The limiting statement follows by lower semicontinuity of the objective in $L^2(\mu)$ along any minimizing sequence whose associated velocities converge.
\end{proof}

\paragraph{Proof of Proposition~\ref{cor:particle}.}
\begin{proof}
For empirical measures, the continuity equation reduces to the characteristic system for the atoms. The identity $\MN_{\mu_X}(v)^2=n^{-1}\mN(V)^2$ and the normalized pairing $n^{-1}\tr(G^\top V)$ reduce the measure-level minimization in Definition~\ref{def:duality} to
$$
\operatorname*{argmin}_V\left\{\tr(G^\top V)+\frac12\mN(V)^2\right\}=\mJ_\gamma(G).
$$
Stacking the particle velocities yields $\dot X_t=\mJ_\gamma(G_t)$. If the matrix objective is $\mF_n(X)=n\MF(\mu_X)$, then $\nabla\mF_n(X)=G_t$, giving the same ODE. If instead $\mF(X)=\MF(\mu_X)$, then $\nabla\mF(X)=n^{-1}G_t$ and one obtains $\dot X_t=n^{-1}\mJ_\gamma(G_t)$ by one-homogeneity of $\mJ_\gamma$. This is exactly the same weak measure equation after rescaling time by $n$.
\end{proof}

\subsection{Comparison between $\Wtwo$ and $\Wg$}
\label{app:static-details}

\paragraph{Proof of Proposition~\ref{prop:bounds}.}
\begin{proof}
Since $\gamma$ is a norm on the finite-dimensional space $\mathbb S^d$, its
restriction to the compact set $\{S\succeq 0:\tr(S)=1\}$ is continuous and
strictly positive. Hence there exist constants
$0<c_\gamma\le C_\gamma<\infty$ such that
$c_\gamma\tr(S)\le \gamma(S)\le C_\gamma\tr(S)$ for every $S\succeq0$.

Let $\pi\in\Pi(\mu,\nu)$ be any coupling, and define its displacement
covariance by
\[
    \Sigma_\pi
    \coloneqq
    \int_{\R^d\times\R^d} (x-y)(x-y)^\top\,d\pi(x,y).
\]
Then $\Sigma_\pi\succeq0$, and therefore
\[
    c_\gamma\tr(\Sigma_\pi)
    \le
    \gamma(\Sigma_\pi)
    \le
    C_\gamma\tr(\Sigma_\pi).
\]
Moreover,
\[
    \tr(\Sigma_\pi)
    =
    \int_{\R^d\times\R^d} \tr\bigl((x-y)(x-y)^\top\bigr)\,d\pi(x,y)
    =
    \int_{\R^d\times\R^d} |x-y|^2\,d\pi(x,y).
\]
Thus, for every coupling $\pi\in\Pi(\mu,\nu)$,
\[
    c_\gamma
    \int |x-y|^2\,d\pi(x,y)
    \le
    \gamma(\Sigma_\pi)
    \le
    C_\gamma
    \int |x-y|^2\,d\pi(x,y).
\]
Taking infima over $\pi\in\Pi(\mu,\nu)$ gives
\[
    c_\gamma \Wtwo(\mu,\nu)^2
    \le
    \Wg(\mu,\nu)^2
    \le
    C_\gamma \Wtwo(\mu,\nu)^2,
\]
where we used the definitions of $\Wtwo$ and $\Wg$. Taking square roots
yields
\[
    \sqrt{c_\gamma}\,\Wtwo(\mu,\nu)
    \le
    \Wg(\mu,\nu)
    \le
    \sqrt{C_\gamma}\,\Wtwo(\mu,\nu).
\]
\end{proof}

\paragraph{Schatten constants and sharpness.}

For Schatten norms, $d^{1/p-1}\,\Wtwo(\mu,\nu)^2\le \mathsf W_{\gamma_p}(\mu,\nu)^2\le \Wtwo(\mu,\nu)^2$, equivalently $c_{\gamma_p}=d^{1/p-1}$ and $C_{\gamma_p}=1$. In particular, $(c_{\gamma_2},C_{\gamma_2})=(d^{-1/2},1)$ and $(c_{\gamma_\infty},C_{\gamma_\infty})=(d^{-1},1)$. These constants follow from $\norm{\lambda}_{\ell_p}\le \norm{\lambda}_{\ell_1}\le d^{1-1/p}\norm{\lambda}_{\ell_p}$.

The lower Schatten bound is sharp. For $\gamma=\gamma_p$, equality in the lower bound is attained by isotropic Gaussian pairs $\mu=\mathcal N(0,\alpha I_d)$ and $\nu=\mathcal N(0,\beta I_d)$, $\alpha,\beta>0$. By the Gaussian computation of Appendix~\ref{app:gaussians}, the displacement covariance is $(\sqrt\alpha-\sqrt\beta)^2I_d$, hence $\mathsf W_{\gamma_p}(\mu,\nu)^2=d^{1/p-1}\Wtwo(\mu,\nu)^2$.

\paragraph{Proof of Theorem~\ref{thm:minmax}.}
\begin{proof}
Let $\pi\in\Pi(\mu,\nu)$ and set
\[
    \Sigma_\pi
    \coloneqq
    \int_{\R^d\times\R^d}
    (y-x)(y-x)^\top\,d\pi(x,y).
\]
Since $\mu,\nu\in\cP_2(\R^d)$, the matrix $\Sigma_\pi$ is well-defined and
belongs to $\mathbb S_+^d$. By the support representation of $\gamma$,
\[
    \gamma(\Sigma_\pi)
    =
    \max_{Q\in\cK}\tr(Q\Sigma_\pi).
\]
Moreover, for every $Q\in\cK$,
\[
    \tr(Q\Sigma_\pi)
    =
    \int_{\R^d\times\R^d}
    (y-x)^\top Q(y-x)\,d\pi(x,y).
\]
Therefore
\[
    \Wg(\mu,\nu)^2
    =
    \inf_{\pi\in\Pi(\mu,\nu)}
    \max_{Q\in\cK}
    \int
    (y-x)^\top Q(y-x)\,d\pi(x,y).
\]

It remains to justify the exchange of $\inf_\pi$ and $\max_Q$. Define
\[
    F(\pi,Q)
    \coloneqq
    \int_{\R^d\times\R^d}
    (y-x)^\top Q(y-x)\,d\pi(x,y).
\]
The coupling set $\Pi(\mu,\nu)$ is convex and weakly compact, since its
marginals are fixed. The set $\cK$ is convex and compact by assumption. For
fixed $\pi$, the map $Q\mapsto F(\pi,Q)$ is affine, hence both concave and
convex, and is continuous on $\cK$. For fixed $Q\succeq0$, the cost
$(x,y)\mapsto (y-x)^\top Q(y-x)$ is nonnegative and lower semicontinuous.
Hence $\pi\mapsto F(\pi,Q)$ is lower semicontinuous for weak convergence of
probability measures, by the Portmanteau theorem. It is also affine in
$\pi$.

Thus the lower-semicontinuous form of Sion's minimax theorem~\cite{sion1958general}  applies and
gives
\[
    \inf_{\pi\in\Pi(\mu,\nu)}
    \max_{Q\in\cK} F(\pi,Q)
    =
    \max_{Q\in\cK}
    \inf_{\pi\in\Pi(\mu,\nu)} F(\pi,Q).
\]
The only point requiring care is that the quadratic cost is not bounded on
$\R^d\times\R^d$. This is precisely why one uses the lower-semicontinuous
version of the minimax theorem rather than the bounded continuous one. 
Consequently,
\[
    \Wg(\mu,\nu)^2
    =
    \max_{Q\in\cK}
    \inf_{\pi\in\Pi(\mu,\nu)}
    \int
    (y-x)^\top Q(y-x)\,d\pi(x,y).
\]
Finally, by definition of the quadratic optimal transport cost associated
with $Q$,
\[
    \WQ{Q}(\mu,\nu)^2
    =
    \inf_{\pi\in\Pi(\mu,\nu)}
    \int
    (y-x)^\top Q(y-x)\,d\pi(x,y).
\]
Therefore
\[
    \Wg(\mu,\nu)^2
    =
    \max_{Q\in\cK}\WQ{Q}(\mu,\nu)^2,
\]
which proves the claim.
\end{proof}

\subsection{Metric property from the robust-cost formulation}
\label{subsec:metric-static}

The max--min representation of Theorem~\ref{thm:minmax} gives a direct proof
of the triangle inequality when $\gamma$ is monotone. This avoids using the
Benamou--Brenier formulation for this specific purpose.

\begin{proposition}[Static proof of the metric property]
\label{prop:metric-static}
Assume that $\gamma$ is monotone on $\mathbb S_+^d$. Then $\Wg$ is a distance
on $\cP_2(\R^d)$ and induces the same topology as $\Wtwo$.
\end{proposition}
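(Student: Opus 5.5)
The plan is to use Theorem~\ref{thm:minmax} with a representing set $\cK\subset\mathbb S_+^d$. Proposition~\ref{prop:psdpolar} guarantees such a set exists when $\gamma$ is monotone. Then
\[
\Wg(\mu,\nu)=\max_{Q\in\cK}\WQ{Q}(\mu,\nu).
\]
A supremum of pseudometrics is a pseudometric, so the whole proof reduces to showing that each $\WQ{Q}$ with $Q\succeq0$ satisfies the triangle inequality.

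For a fixed $Q\in\cK$, set $L\coloneqq Q^{1/2}$ and note that
\[
(y-x)^\top Q(y-x)=|Ly-Lx|^2 .
\]
I will check that $\WQ{Q}(\mu,\nu)=\Wtwo(L_\#\mu,L_\#\nu)$. The inequality $\ge$ is immediate, since $(L\times L)_\#\pi\in\Pi(L_\#\mu,L_\#\nu)$ has the same cost as $\pi$. For $\le$, take an optimal coupling $\tilde\pi$ between $L_\#\mu$ and $L_\#\nu$. Glue it with the disintegrations of $\mu$ along the fibers of $L$ and of $\nu$ along the fibers of $L$. This produces $\pi\in\Pi(\mu,\nu)$ with $(L\times L)_\#\pi=\tilde\pi$, by the standard gluing lemma applied to $(\mu,L_\#\mu)$, $\tilde\pi$, and $(L_\#\nu,\nu)$. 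Since $L$ may be singular, this gluing step is where care is needed, and I expect it to be the main (though routine) obstacle. A simpler alternative is to glue directly: given $\pi_{12}\in\Pi(\mu_1,\mu_2)$ and $\pi_{23}\in\Pi(\mu_2,\mu_3)$, glue them along $\mu_2$ into a measure on $(\R^d)^3$. Minkowski's inequality in $L^2$ for the seminorm $z\mapsto|Lz|$ then gives
\[
\Bigl(\int|L(x_3-x_1)|^2\Bigr)^{1/2}\le\Bigl(\int|L(x_2-x_1)|^2\Bigr)^{1/2}+\Bigl(\int|L(x_3-x_2)|^2\Bigr)^{1/2}.
\]
This yields $\WQ{Q}(\mu_1,\mu_3)\le\WQ{Q}(\mu_1,\mu_2)+\WQ{Q}(\mu_2,\mu_3)$ after taking infima, and I will use this route because it avoids the pushforward identification altogether.

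Taking the maximum over $Q\in\cK$ gives the triangle inequality for $\Wg$. Symmetry is clear from swapping the roles of $x$ and $y$ in couplings. Finiteness on $\cP_2$ and separation follow from Proposition~\ref{prop:bounds}: $\Wg(\mu,\nu)=0$ forces $\Wtwo(\mu,\nu)=0$, hence $\mu=\nu$. Finally, the two-sided bound $\sqrt{c_\gamma}\,\Wtwo\le\Wg\le\sqrt{C_\gamma}\,\Wtwo$ shows that the metrics are equivalent and so induce the same topology and the same convergent sequences.
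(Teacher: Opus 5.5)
Your proposal is correct and follows the paper's own proof. You choose a PSD representing set via Proposition~\ref{prop:psdpolar}, write $\Wg=\max_{Q\in\cK}\WQ{Q}$ by Theorem~\ref{thm:minmax}, prove the triangle inequality for each $\WQ{Q}$, take the maximum, and read off separation and the topology from Proposition~\ref{prop:bounds}; the gluing-plus-Minkowski step simply spells out what the paper leaves implicit when it calls $\WQ{Q}$ the Wasserstein pseudodistance of the seminorm $d_Q$ (and the identity $\Wg(\mu,\mu)=0$, which you leave unstated, is immediate from $\Wg\le\sqrt{C_\gamma}\,\Wtwo$).
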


\begin{proof}
By Proposition~\ref{prop:psdpolar}, monotonicity allows us to choose a
representing set $\cK\subset \mathbb S_+^d$ such that
\[
    \gamma(S)=\max_{Q\in\cK}\tr(QS),
    \qquad S\succeq0.
\]
Hence Theorem~\ref{thm:minmax} gives
\[
    \Wg(\mu,\nu)
    =
    \max_{Q\in\cK}\WQ{Q}(\mu,\nu).
\]
For a fixed $Q\in\cK$, write
\[
    d_Q(x,y)\coloneqq \norm{Q^{1/2}(x-y)}_2 .
\]
Since $Q\succeq0$, $d_Q$ is a seminorm distance on $\R^d$, and therefore
$\WQ{Q}$ is the corresponding quadratic Wasserstein pseudodistance. In
particular, for all $\mu_0,\mu_1,\mu_2\in\cP_2(\R^d)$,
\[
    \WQ{Q}(\mu_0,\mu_2)
    \le
    \WQ{Q}(\mu_0,\mu_1)+\WQ{Q}(\mu_1,\mu_2).
\]
Taking the maximum over $Q\in\cK$ yields
\[
\begin{aligned}
    \Wg(\mu_0,\mu_2)
    &=
    \max_{Q\in\cK}\WQ{Q}(\mu_0,\mu_2) \\
    &\le
    \max_{Q\in\cK}
    \left\{
        \WQ{Q}(\mu_0,\mu_1)+\WQ{Q}(\mu_1,\mu_2)
    \right\} \\
    &\le
    \max_{Q\in\cK}\WQ{Q}(\mu_0,\mu_1)
    +
    \max_{Q\in\cK}\WQ{Q}(\mu_1,\mu_2) \\
    &=
    \Wg(\mu_0,\mu_1)+\Wg(\mu_1,\mu_2).
\end{aligned}
\]
Symmetry is immediate from the definition. Moreover $\Wg(\mu,\mu)=0$, and
separation follows from Proposition~\ref{prop:bounds}: if $\Wg(\mu,\nu)=0$,
then $\Wtwo(\mu,\nu)=0$, hence $\mu=\nu$. The same comparison bounds also show
that $\Wg$ and $\Wtwo$ induce the same topology on $\cP_2(\R^d)$.
\end{proof}

\begin{remark}[Relation with subspace robust Wasserstein distances]
\label{rem:paty-cuturi-metric-proof}
The proof is the same robust-cost argument used by
\citet{PatyCuturi2019} for Ky Fan gauges. The only point needed in their
argument is that the maximizing cost matrices are positive semidefinite. By
Proposition~\ref{prop:psdpolar}, this positivity holds for any monotone gauge
$\gamma$, not only for Ky Fan gauges. Thus the Paty--Cuturi triangle-inequality
proof extends verbatim to the present monotone-gauge setting.
\end{remark}

\subsection{Large-mean asymptotics for Schatten geometries}
\label{subsec:large-mean-schatten}

We now describe the behavior of $\Wg$ when the displacement between the means
is much larger than the centered fluctuations of the measures. This regime is
useful to interpret the optimizer in the max-min formula: for Schatten
geometries with $p>1$, the optimal certificate becomes increasingly aligned
with the direction of the mean displacement, and the residual transport
problem is effectively measured only along this direction.

\begin{proposition}[Large-mean asymptotics for Schatten geometries]
\label{prop:large-mean-schatten}
Let $\gamma=\gamma_p$ be the Schatten-$p$ norm, with $p\in[1,\infty]$, and let
$\mu,\nu\in\cP_2(\R^d)$. Set
$m_\mu\coloneqq\int x\,d\mu(x)$, $m_\nu\coloneqq\int x\,d\nu(x)$, and
$\Delta\coloneqq m_\nu-m_\mu$. Assume $\Delta\neq0$, and write
$\Delta=Ru$, where $R=\abs{\Delta}$ and $u=\Delta/\abs{\Delta}$. Let
$\bar\mu\coloneqq(\Id-m_\mu)_\#\mu$ and
$\bar\nu\coloneqq(\Id-m_\nu)_\#\nu$. Then, as $R\to\infty$ with
$\bar\mu,\bar\nu$ fixed, the following asymptotics hold.
For $p=1$, 
\[
    \Wg(\mu,\nu)^2
    =
    R^2+\Wtwo(\bar\mu,\bar\nu)^2 .
\]
For $1<p\le\infty$,
\[
    \Wg(\mu,\nu)^2
    =
    R^2+\WQ{uu^\top}(\bar\mu,\bar\nu)^2+o(1).
\]
\end{proposition}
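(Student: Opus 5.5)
The plan is to reduce everything to couplings of the centered measures $\bar\mu,\bar\nu$ and then expand the Schatten norm of a rank-one matrix of size $R^2$ perturbed by a bounded positive semidefinite matrix.

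\emph{Reduction to centered couplings.} Couplings $\pi\in\Pi(\mu,\nu)$ correspond bijectively to couplings $\bar\pi\in\Pi(\bar\mu,\bar\nu)$ via $(x,y)=(\bar x+m_\mu,\bar y+m_\nu)$. Under this change of variables $y-x=\Delta+(\bar y-\bar x)$. Since $\int(\bar y-\bar x)\,d\bar\pi=0$, the cross terms vanish and
\[
\Sigma_\pi=R^2uu^\top+\bar\Sigma_{\bar\pi},\qquad \bar\Sigma_{\bar\pi}\coloneqq\int(\bar y-\bar x)(\bar y-\bar x)^\top d\bar\pi .
\]
Hence $\Wg(\mu,\nu)^2=\inf_{\bar\pi}\gamma_p\bigl(R^2uu^\top+\bar\Sigma_{\bar\pi}\bigr)$. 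Moreover $\tr\bar\Sigma_{\bar\pi}\le 2\bigl(\int|x|^2d\bar\mu+\int|y|^2d\bar\nu\bigr)\eqqcolon M$, uniformly in $\bar\pi$ and independent of $R$.

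\emph{Case $p=1$.} The trace is linear, so $\Wg(\mu,\nu)^2=R^2+\inf_{\bar\pi}\tr\bar\Sigma_{\bar\pi}=R^2+\Wtwo(\bar\mu,\bar\nu)^2$ exactly.

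\emph{Case $1<p\le\infty$, lower bound.} Write $A=R^2uu^\top+B$ with $B=\bar\Sigma_{\bar\pi}\succeq0$. Then
\[
\gamma_p(A)\ge\lambda_{\max}(A)\ge u^\top Au=R^2+u^\top Bu=R^2+\int\langle \bar y-\bar x,u\rangle^2d\bar\pi .
\]
Taking the infimum over $\bar\pi$ gives $\Wg(\mu,\nu)^2\ge R^2+\WQ{uu^\top}(\bar\mu,\bar\nu)^2$ exactly, with no error term.

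\emph{Case $1<p\le\infty$, upper bound.} Fix $\bar\pi^\star$ optimal for the cost $\langle\bar y-\bar x,u\rangle^2$; it exists by standard lower semicontinuity and tightness. Set $B=\bar\Sigma_{\bar\pi^\star}$, which is now fixed as $R\to\infty$, with $\|B\|\le M$.

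First estimate the top eigenvalue. By first-order perturbation theory for the isolated top eigenvalue $R^2$ of $R^2uu^\top$, one has $\lambda_1(A)=R^2+u^\top Bu+O(M^2/R^2)$. A clean route is the Schur complement in the basis $(u,u^\perp)$: $\lambda_1$ solves $\lambda=R^2+b_{11}+b_{12}^\top(\lambda-B_{22})^{-1}b_{12}$, and the last term is at most $M^2/(R^2-M)$. The remaining eigenvalues satisfy $0\le\lambda_i(A)\le\|B\|\le M$ for $i\ge2$, by Weyl's inequality, since $R^2uu^\top$ has rank one.

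This settles $p=\infty$. For $p<\infty$,
\[
\|A\|_{S_p}=\lambda_1\Bigl(1+\textstyle\sum_{i\ge2}(\lambda_i/\lambda_1)^p\Bigr)^{1/p}\le\lambda_1+O\bigl(\lambda_1^{1-p}M^p\bigr)=\lambda_1+O(R^{2-2p}),
\]
which is $o(1)$ exactly because $p>1$. This is where $p>1$ is essential: for $p=1$ the transverse eigenvalues add an $O(1)$ contribution, which recovers the full $\Wtwo$ term instead. Combining, $\Wg(\mu,\nu)^2\le\gamma_p(A)=R^2+\WQ{uu^\top}(\bar\mu,\bar\nu)^2+o(1)$.

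\emph{Main obstacle.} The only delicate step is the eigenvalue perturbation estimate in the upper bound. I expect to handle it by the Schur complement identity above, together with Weyl's bound for the transverse eigenvalues. Because the lower bound is exact and the upper bound uses a single fixed coupling, no uniformity over couplings is needed.
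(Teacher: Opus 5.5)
Your proof is correct, and for $1<p\le\infty$ it takes a genuinely different route from the paper's.

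The paper works on the dual side. It uses the max--min representation of Theorem~\ref{thm:minmax} with $\mathcal K_{\gamma_p}=\{Q\succeq0:\norm{Q}_{S_q}\le1\}$ and the exact splitting $\WQ{Q}(\mu,\nu)^2=\Delta^\top Q\Delta+\WQ{Q}(\bar\mu,\bar\nu)^2$. It then argues qualitatively:
\begin{itemize}
\item the face of $\mathcal K_{\gamma_p}$ exposed by $uu^\top$ is the singleton $\{uu^\top\}$;
\item by compactness, any maximizing certificates satisfy $Q_R\to uu^\top$;
\item continuity of $Q\mapsto\WQ{Q}(\bar\mu,\bar\nu)^2$ together with $u^\top Q_Ru\le1$ gives the upper bound.
\end{itemize}
The lower bound there is the choice $Q=uu^\top$, which is exactly the dual form of your inequality $\gamma_p(A)\ge u^\top Au$.

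You instead stay on the primal side. You fix one coupling that is optimal for the projected cost and bound $\norm{R^2uu^\top+B}_{S_p}$ directly: a Schur-complement estimate for the top eigenvalue and Weyl interlacing for the transverse eigenvalues.

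What your route buys:
\begin{itemize}
\item It needs neither the minimax theorem nor continuity in $Q$.
\item It gives an explicit error $O(R^{-2})+O(R^{2-2p})$, which shows quantitatively how convergence degenerates as $p\downarrow1$.
\item Your constants depend only on the second moments of $\bar\mu,\bar\nu$, so the estimate is uniform in the direction $u$.
\end{itemize}

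What the paper's route buys:
\begin{itemize}
\item It gives the structural information $Q_R\to uu^\top$, which the main text uses to interpret the change of optimal matchings in Figure~\ref{fig:static-pair}.
\item It adapts directly to an arbitrary monotone gauge. There the second-order term is $\max_Q\WQ{Q}(\bar\mu,\bar\nu)^2$ over the face of $\cK$ exposed by $uu^\top$, whereas your eigenvalue expansion is specific to the Schatten family.
\end{itemize}
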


\begin{proof}
We first separate the mean displacement from the centered transport. Fix
$Q\succeq0$. If $\pi\in\Pi(\mu,\nu)$ and if $\bar\pi$ denotes the corresponding
coupling of $\bar\mu$ and $\bar\nu$, then writing
$x=m_\mu+\bar x$ and $y=m_\nu+\bar y$ gives
\[
    y-x=\Delta+(\bar y-\bar x).
\]
Therefore
\[
\begin{aligned}
    \int (y-x)^\top Q(y-x)\,d\pi(x,y)
    &=
    \Delta^\top Q\Delta
    +
    2\Delta^\top Q
    \int(\bar y-\bar x)\,d\bar\pi(\bar x,\bar y) \\
    &\qquad
    +
    \int(\bar y-\bar x)^\top Q(\bar y-\bar x)\,
    d\bar\pi(\bar x,\bar y).
\end{aligned}
\]
The middle term vanishes because $\bar\mu$ and $\bar\nu$ have zero mean. Taking
the infimum over couplings yields the exact decomposition
\[
    \WQ{Q}(\mu,\nu)^2
    =
    \Delta^\top Q\Delta+\WQ{Q}(\bar\mu,\bar\nu)^2.
\]
By Theorem~\ref{thm:minmax},
\[
    \Wg(\mu,\nu)^2
    =
    \max_{Q\in\mathcal K_{\gamma_p}}
    \left\{
        \Delta^\top Q\Delta+\WQ{Q}(\bar\mu,\bar\nu)^2
    \right\}.
\]

For $p=1$, we use the reduced representing set
$\mathcal K_{\gamma_1}=\{\Id\}$. Then
\[
    \Wg(\mu,\nu)^2
    =
    \Delta^\top\Delta+\WQ{\Id}(\bar\mu,\bar\nu)^2
    =
    R^2+\Wtwo(\bar\mu,\bar\nu)^2,
\]
as claimed.

Assume now that $1<p\le\infty$, and let $q$ be the conjugate exponent, with
$q=1$ when $p=\infty$. We use the positive dual Schatten representing set
\[
    \mathcal K_{\gamma_p}
    =
    \{Q\succeq0:\norm{Q}_{S_q}\le1\}.
\]
Since $\Delta=Ru$, the max-min formula becomes
\[
    \Wg(\mu,\nu)^2
    =
    \max_{Q\in\mathcal K_{\gamma_p}}
    \left\{
        R^2 u^\top Q u+\WQ{Q}(\bar\mu,\bar\nu)^2
    \right\}.
\]
By Schatten duality, applied to the rank-one matrix $uu^\top$,
\[
    \max_{Q\in\mathcal K_{\gamma_p}} u^\top Q u
    =
    \max_{\substack{Q\succeq0\\ \norm{Q}_{S_q}\le1}}
    \tr(Q\,uu^\top)
    =
    \norm{uu^\top}_{S_p}
    =
    1.
\]
For $1<p<\infty$, the equality case in Hölder's inequality is unique and gives
$Q=uu^\top$. For $p=\infty$, the constraint is $\tr(Q)\le1$, and
$u^\top Q u\le\tr(Q)\le1$; equality again forces $Q=uu^\top$. Thus the exposed
face of $\mathcal K_{\gamma_p}$ in the direction $uu^\top$ is the singleton
$\{uu^\top\}$ for every $1<p\le\infty$.

Let $Q_R$ be an optimal certificate for the previous maximization problem.
Since $\mathcal K_{\gamma_p}$ is compact, any sequence $R_n\to\infty$ admits a
subsequence such that $Q_{R_n}\to Q_\infty\in\mathcal K_{\gamma_p}$. Dividing
the optimality relation by $R_n^2$ and passing to the limit shows that
$Q_\infty$ maximizes $Q\mapsto u^\top Q u$ over $\mathcal K_{\gamma_p}$. By
the uniqueness of the exposed maximizer, $Q_\infty=uu^\top$. Hence every
maximizing certificate satisfies
\[
    Q_R\longrightarrow uu^\top
    \qquad\text{as }R\to\infty.
\]
Since $Q\mapsto\WQ{Q}(\bar\mu,\bar\nu)^2$ is continuous on
$\mathcal K_{\gamma_p}$, we obtain
\[
    \WQ{Q_R}(\bar\mu,\bar\nu)^2
    \longrightarrow
    \WQ{uu^\top}(\bar\mu,\bar\nu)^2.
\]
Substituting this into the max-min formula gives
\[
    \Wg(\mu,\nu)^2
    =
    R^2+\WQ{uu^\top}(\bar\mu,\bar\nu)^2+o(1).
\]
Taking square roots yields
\[
    \Wg(\mu,\nu)
    =
    R+\frac{1}{2R}\WQ{uu^\top}(\bar\mu,\bar\nu)^2+o(R^{-1}),
\]
which completes the proof.
\end{proof}

The proposition shows that all Schatten geometries have the same leading
large-mean behavior, namely $\Wg(\mu,\nu)\sim\abs{\Delta}$. The difference
appears at the next order and in the optimizer selected by the max-min
formula. For the reduced trace geometry $p=1$, the certificate is fixed and
isotropic, $Q=\Id$, so the centered correction is the usual Wasserstein cost
$\Wtwo(\bar\mu,\bar\nu)^2$. By contrast, for every $p>1$, the optimal
certificate aligns with the normalized mean displacement
$u=\Delta/\abs{\Delta}$, and the centered correction becomes
$\WQ{uu^\top}(\bar\mu,\bar\nu)^2$. Thus, in the large-mean regime, the
coupling selected by the Schatten geometry is increasingly influenced by the
one-dimensional projection along $u$: transverse fluctuations only affect the
cost at lower order through the way they interact with the projected
quadratic transport problem.

\section{Connection with Muon, Orthogonalization, and Momentum}
\label{sec:muon-extensions}
This appendix complements Section~\ref{sec:flows} by linking the matrix ODE for $p=\infty$ to practical Muon variants used at scale, and by showing how Newton--Schulz orthogonalization and momentum fit in the same normalized-flow language.

\paragraph{Taking into account Newton--Schulz orthogonalization.}
Setting $p=\infty$ in Proposition~\ref{prop:schatten} yields the LMO \eqref{eq:muon-selector}, so the spectral matrix flow
$$
\dot X_t=\mJ_\infty(\nabla \mF(X_t))
$$
is exactly the vanishing-step-size, vanishing-momentum, full-batch limit of idealized Muon in the normalization convention used throughout this paper.

An important practical feature of Muon is that one does not compute the exact polar factor $UW^\top$ by an SVD at every step. To make the method scalable and GPU-friendly, practical implementations replace $UW^\top = G(G^\top G)^{\dagger/2}$ by a few Newton--Schulz iterations \citep{KellerJordan2024,LiuSuYaoEtAl2025}. After rescaling $G$ by a scalar $\alpha(G)>0$ chosen so that the spectrum of $\alpha(G)^{-2}G^\top G$ lies in $[0,1]$, the classical Newton--Schulz iteration reads
$$
Z_{k+1}
=
\frac12 Z_k(3I-Z_k^\top Z_k),
\qquad
Z_0=\alpha(G)^{-1}G.
$$
For the discussion below, let us freeze the scale and regard $\alpha>0$ as fixed a priori. Then after one iteration one obtains
$$
\mJ_{\alpha,1}(G)
\coloneqq
-Z_1
=
-\frac{1}{2\alpha}
\,G\left(3I-\alpha^{-2}G^\top G\right),
$$
and after $k$ iterations one obtains more generally a polynomial LMO of the form
$$
\mJ_{\alpha,k}(G)
=
-G\,p_{\alpha,k}\!\left(G^\top G\right),
$$
where $p_{\alpha,k}$ is a matrix polynomial approximating $s\mapsto s^{-1/2}$ on the spectral interval of interest. In this fixed-$\alpha$ setting, the polynomial orthogonalization part does admit a scalar generating function: if
$$
p_{\alpha,k}(s)=\sum_{\ell=0}^m a_\ell s^\ell,
$$
then
$$
\gamma_{\alpha,k}(S)\coloneqq \sum_{\ell=0}^m \frac{a_\ell}{\ell+1}\tr(S^{\ell+1})
$$
satisfies
$$
\nabla \gamma_{\alpha,k}(S)=p_{\alpha,k}(S)
\qquad\text{and}\qquad
\mJ_{\alpha,k}(G)
=
-\nabla_G\!\left(\frac12 \gamma_{\alpha,k}(G^\top G)\right).
$$
For one Newton--Schulz step this gives the explicit formula
$$
\gamma_{\alpha,1}(S)
=
\frac{3}{2\alpha}\tr(S)-\frac{1}{4\alpha^3}\tr(S^2).
$$
This function is not a norm, and it is not globally monotone on $\mathbb S_+^d$, because
$$
\nabla \gamma_{\alpha,1}(S)=\frac{3}{2\alpha}I-\frac{1}{2\alpha^3}S
$$
ceases to be positive semidefinite once $S$ has eigenvalues larger than $3\alpha^2$. It is nevertheless monotone on the truncated spectral region
$$
0\preceq S\preceq 3\alpha^2 I,
$$
and in particular on the standard Newton--Schulz regime $0\preceq S\preceq \alpha^2 I$.

Therefore, if one replaces the exact Muon LMO $\mJ_\infty$ by the fixed-$\alpha$ Newton--Schulz LMO $\mJ_{\alpha,k}$, the resulting approximate Muon dynamics is again generated by a spectral functional, now $\gamma_{\alpha,k}$. The main caveat is that in practical implementations the scaling parameter $\alpha$ is not frozen but chosen adaptively from the current matrix in order to track the evolving spectrum during training. Extending the present geometric interpretation to that state-dependent choice of $\alpha$ remains an open question.

\paragraph{Taking into account momentum.}
A natural momentum extension, in the spirit of Muon with exponential moving average, is obtained by introducing an auxiliary matrix variable $A_t$ and considering
\begin{equation}
\label{eq:matrix-momentum}
\tau \dot A_t + A_t = \nabla \mF(X_t),
\qquad
\dot X_t = \mJ_\gamma(A_t),
\end{equation}
where $\tau\ge 0$ is the memory time scale. The variable $A_t$ plays the role of a continuous-time exponentially averaged gradient. When $\tau=0$, one recovers exactly the first-order spectral flow studied above.

For smooth Schatten geometries ($1<p<\infty$), this system is equivalent to a second-order inertial equation
$$
\tau \frac{d}{dt}\bigl(\mI_p(\dot X_t)\bigr) + \mI_p(\dot X_t) + \nabla \mF(X_t)=0,
$$
where
$$
\mI_p(V)
\coloneqq
\norm{V}_{S_r}^{2-r}\,U\diag(\sigma_i^{r-1})W^\top,
\qquad r=2p,
$$
for $V=U\diag(\sigma_i)W^\top$. In particular, for $p=1$ one has $\mI_1(V)=V$ and one recovers the heavy-ball equation
$$
\tau \ddot X_t + \dot X_t + \nabla \mF(X_t)=0.
$$
At the endpoint $p=\infty$, \eqref{eq:matrix-momentum} reads
$$
\tau \dot A_t + A_t = \nabla \mF(X_t),
\qquad
\dot X_t = -\norm{A_t}_{S_1}\,U_tW_t^\top,
$$
which is the exact-polar continuous-time Muon dynamics with momentum.

The corresponding mean-field description naturally lives on phase space. Let
$$
\eta_t\in\cP_2(\R^d\times\R^d)
$$
be the law of pairs $(x,a)$, where $x$ is the particle position and $a$ is the exponentially averaged force. Its spatial marginal is
$$
\mu_t=(\pi_x)_\#\eta_t.
$$
For a functional $\MF$ on measures, write its Wasserstein force as
$$
g_\mu(x)=\nabla_x\frac{\delta\MF}{\delta\mu}(x).
$$
Given a phase-space law $\eta$, define the momentum covariance
$$
S(\eta)\coloneqq
\int_{\R^d\times\R^d}aa^\top\,d\eta(x,a).
$$
In the monotone regime, and assuming for simplicity that an invertible inverse-trace minimizer can be selected, choose
$$
Q_\eta^*\in\operatorname*{argmin}_{Q\in\cK\cap\Sppd}\tr\!\bigl(Q^{-1}S(\eta)\bigr).
$$
The phase-space characteristics are then
\begin{equation}
\label{eq:phase-momentum-characteristics}
\dot x_t=-(Q_{\eta_t}^*)^{-1}a_t,
\qquad
\tau\dot a_t=g_{\mu_t}(x_t)-a_t .
\end{equation}
Consequently, the law $\eta_t$ solves the Liouville-type equation
\begin{equation}
\label{eq:phase-momentum-liouville}
\partial_t\eta_t
+\operatorname{div}_x(\eta_t u_t)
+\frac1\tau\operatorname{div}_a\!\bigl(\eta_t(g_{\mu_t}(x)-a)\bigr)=0,
\end{equation}
where
$$
u_t(x,a)=-(Q_{\eta_t}^*)^{-1}a,
\qquad
\mu_t=(\pi_x)_\#\eta_t.
$$
When $\tau=0$, the averaged force instantaneously relaxes to $a=g_{\mu_t}(x)$, and the spatial marginal formally recovers the first-order normalized PDE \eqref{eq:measure-spectral-flow}. For $\tau>0$, the dynamics is no longer a metric gradient flow for $\Wg$ on the position marginal alone; it is a damped kinetic system in which the spectral geometry enters through the constitutive relation between the phase-space force covariance and the spatial velocity.

\section{Monge Maps and Conditional Brenier Theorem}
\label{app:monge}
This appendix collects the map-based statements associated with the static formulation of Section~\ref{sec:static}.

The next definition records the Monge restriction, which is useful for comparison but is not the correct notion in general.
\begin{definition}[Monge restriction]
\label{def:monge}
Define
$$
\WgM(\mu,\nu)^2
\coloneqq 
\inf_{T_\#\mu=\nu}
\gamma\!\left(
\int_{\R^d}(T(x)-x)(T(x)-x)^\top\,d\mu(x)
\right),
$$
with value $+\infty$ if no transport map exists.
\end{definition}

The next proposition simply records that the map-based problem is a restriction of the coupling-based one.
\begin{proposition}[Monge is a restriction]
\label{prop:monge}
For every $\mu,\nu\in\cP_2(\R^d)$,
$$
\Wg(\mu,\nu)\le \WgM(\mu,\nu).
$$
\end{proposition}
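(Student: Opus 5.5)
The plan is to embed maps into couplings. Fix $\mu,\nu\in\cP_2(\R^d)$. If no measurable $T$ with $T_\#\mu=\nu$ exists, then $\WgM(\mu,\nu)=+\infty$ and there is nothing to prove. Otherwise, take any admissible map $T$ and form the graph coupling
$$
\pi_T\coloneqq(\Id,T)_\#\mu .
$$
Its first marginal is $\mu$. Its second marginal is $T_\#\mu=\nu$. Hence $\pi_T\in\Pi(\mu,\nu)$.

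The key step is the identity of displacement covariances. By the change of variables formula for pushforwards,
$$
\int_{\R^d\times\R^d}(y-x)(y-x)^\top\,d\pi_T(x,y)=\int_{\R^d}(T(x)-x)(T(x)-x)^\top\,d\mu(x).
$$
Both sides are finite, entrywise bounded by $2\int|x|^2d\mu+2\int|y|^2d\nu$, because $\mu,\nu\in\cP_2$. Therefore the Monge objective of $T$ coincides with the Kantorovich objective $\gamma(\Sigma_{\pi_T})$ of Definition~\ref{def:static}, and in particular
$$
\Wg(\mu,\nu)^2\le\gamma(\Sigma_{\pi_T}).
$$

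To conclude, take the infimum over all admissible $T$. This gives $\Wg(\mu,\nu)^2\le\WgM(\mu,\nu)^2$, and taking nonnegative square roots yields the claim. No monotonicity of $\gamma$ is needed, since we only compare values of $\gamma$ at identical matrices. There is no genuine obstacle here; the only points to handle carefully are the convention $\WgM=+\infty$ when no map exists and the integrability that makes $\Sigma_{\pi_T}$ well defined.
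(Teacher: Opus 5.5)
Your proof is correct and matches the paper's argument: the graph coupling $(\Id,T)_\#\mu$ has displacement covariance equal to the Monge covariance of $T$, so taking the infimum over admissible maps gives $\Wg\le\WgM$. Your remarks on the convention $\WgM=+\infty$ when no map exists and on the finiteness of the covariance are correct details that the paper leaves implicit.
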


\begin{proof}
Every transport map $T$ induces the coupling $(\Id,T)_\#\mu$, and the static cost evaluated on this coupling is exactly the Monge cost associated with $T$.
\end{proof}

The following remark shows on a two-Dirac example that the relaxation can be genuinely strict.
\begin{remark}[Strictness of the Monge restriction]
Consider
$$
\mu=\frac12(\delta_{(-1,0)}+\delta_{(1,0)}),
\qquad
\nu=\frac12(\delta_{(0,-1)}+\delta_{(0,1)}).
$$
Any transport map from $\mu$ to $\nu$ is a bijection between the two atoms. The two possible displacement covariance matrices are
$$
\begin{pmatrix}
1 & 1\\
1 & 1
\end{pmatrix}
\qquad\text{or}\qquad
\begin{pmatrix}
1 & -1\\
-1 & 1
\end{pmatrix},
$$
so for the operator norm and Frobenius norm one gets
$$
\WgM(\mu,\nu)^2=2.
$$
By contrast, the split coupling assigning mass $1/4$ to each source-target pair has displacement covariance equal to the identity matrix, hence
$$
\Wg(\mu,\nu)^2=
1
\quad\text{for }\gamma=\norm{\cdot}_{S_\infty},
\qquad
\Wg(\mu,\nu)^2=
\sqrt{2}
\quad\text{for }\gamma=\norm{\cdot}_{S_2}.
$$
Therefore the inequality in Proposition~\ref{prop:monge} is strict already for two-point measures.
\end{remark}

The next remark explains that, for a completely arbitrary norm on $\mathbb S_+^d$, the static cost need not yet be a bona fide metric.
\begin{remark}[Triangle inequality may fail for non-monotone norms]
\label{rem:notmetric}
For Dirac masses, the unique coupling gives
$$
\Wg(\delta_x,\delta_y)^2
=
\gamma\!\left((y-x)(y-x)^\top\right).
$$
Hence if $\Wg$ were always a distance, then the pointwise cost
$$
d_\gamma(x,y)
\coloneqq 
\sqrt{\gamma\!\left((y-x)(y-x)^\top\right)}
$$
would in particular have to define a metric on $\R^d$.

This fails for general norms on $\mathbb S_+^d$. Fix $d\ge 2$ and a parameter $M>2$, and define
$$
\gamma_{\mathrm{ns}}(S)
\coloneqq 
\tr(S)+M\sum_{1\le i<j\le d}\abs{S_{ij}},
\qquad S\succeq 0.
$$
This is a norm on $\mathbb S_+^d$, but it is not spectral since it depends on the matrix entries and not only on the eigenvalues. For the displacement
$$
\Delta=y-x=(\Delta_1,\dots,\Delta_d)
$$
one has
$$
d_{\gamma_{\mathrm{ns}}}(x,y)^2
=
\norm{\Delta}_2^2
+
M\sum_{1\le i<j\le d}\abs{\Delta_i\Delta_j}.
$$
Taking
$$
x_0=0,
\qquad
x_1=e_1,
\qquad
x_2=e_1+e_2
$$
gives
$$
d_{\gamma_{\mathrm{ns}}}(x_0,x_1)=1,
\qquad
d_{\gamma_{\mathrm{ns}}}(x_1,x_2)=1,
\qquad
d_{\gamma_{\mathrm{ns}}}(x_0,x_2)=\sqrt{2+M}>2,
$$
so the triangle inequality fails.

By contrast, if $\gamma$ is monotone on $\mathbb S_+^d$, then Proposition~\ref{prop:psdpolar} allows us to choose $\cK\subset \mathbb S_+^d$, and therefore
$$
d_\gamma(x,y)
=
\sup_{Q\in\cK}\norm{Q^{1/2}(x-y)}_2,
$$
which is a supremum of seminorms and therefore a genuine distance on $\R^d$. For general measures, however, this pointwise argument does not control the cross terms created by gluing couplings. The full metric property of $\Wg$ in the monotone case is therefore proved later, through the dynamic formulation, in Corollary~\ref{cor:metric}.
\end{remark}

The following proposition explains when an optimal coupling is induced by a Monge map of Brenier type.
\begin{proposition}[Conditional Brenier theorem]
\label{cor:brenier}
Assume $\mu$ is absolutely continuous. If a maximizing matrix $Q_*\in\cK$ in Theorem~\ref{thm:minmax} is positive definite, then every optimal coupling for $\Wg(\mu,\nu)$ is induced by a map. More precisely, there exists a convex function $u$ such that
$$
T_*(x)=Q_*^{-1/2}\nabla u(Q_*^{1/2}x)
$$
is optimal.
\end{proposition}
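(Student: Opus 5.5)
The plan is to reduce to the classical Brenier theorem via the linear change of variables $L\coloneqq Q_*^{1/2}$. By Theorem~\ref{thm:minmax}, any optimal coupling $\pi^\star$ for $\Wg(\mu,\nu)$ together with the maximizer $Q_*$ forms a saddle point. Concretely, for any $\Wg$-optimal $\pi^\star$ we have
\[
\Wg(\mu,\nu)^2=\gamma(\Sigma_{\pi^\star})\ge \tr(Q_*\Sigma_{\pi^\star})\ge \WQ{Q_*}(\mu,\nu)^2=\Wg(\mu,\nu)^2,
\]
where the first inequality uses that $Q_*\in\cK$ represents $\gamma$, and the last equality is the max-min identity. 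Hence all inequalities are equalities, and $\pi^\star$ is optimal for the quadratic transport problem with cost $(y-x)^\top Q_*(y-x)$. The first step is to record this chain carefully. It shows that \emph{every} $\Wg$-optimal coupling is $\WQ{Q_*}$-optimal; this also needs existence of a $\Wg$-optimal coupling, which follows from weak compactness of $\Pi(\mu,\nu)$ and lower semicontinuity of $\pi\mapsto\gamma(\Sigma_\pi)$, as a supremum of l.s.c. affine maps.

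Second, we push forward by $L$. Since $Q_*\succ0$, $L$ is an invertible linear map and $(y-x)^\top Q_*(y-x)=|Ly-Lx|^2$. Thus $\pi\mapsto(L\times L)_\#\pi$ is a bijection from $\Pi(\mu,\nu)$ onto $\Pi(L_\#\mu,L_\#\nu)$ that transports the $Q_*$-cost to the Euclidean quadratic cost. Consequently $(L\times L)_\#\pi^\star$ is $\Wtwo$-optimal between $\tilde\mu\coloneqq L_\#\mu$ and $\tilde\nu\coloneqq L_\#\nu$, both of which lie in $\cP_2$. Moreover, $\tilde\mu$ is absolutely continuous because $L$ is a linear isomorphism, with density $\rho(L^{-1}\cdot)|\det L|^{-1}$.

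Third, we apply Brenier's theorem. Every $\Wtwo$-optimal coupling between $\tilde\mu$ and $\tilde\nu$ is of the form $(\Id,\nabla u)_\#\tilde\mu$ for a convex $u$, with $\nabla u$ unique $\tilde\mu$-a.e. Pulling back by $L^{-1}\times L^{-1}$ gives $\pi^\star=(\Id,T_*)_\#\mu$ with $T_*(x)=L^{-1}\nabla u(Lx)=Q_*^{-1/2}\nabla u(Q_*^{1/2}x)$. This shows that every optimal coupling is induced by a map, and that this map has the stated form.

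The main point requiring care is the first step. The claim concerns every optimal coupling, not just some saddle pair, so we must justify that a single fixed maximizer $Q_*$ certifies all optimal $\pi$. This follows from the equality chain above, which needs only that $Q_*$ attains $\max_{Q}\WQ{Q}(\mu,\nu)^2$ and that Theorem~\ref{thm:minmax} holds. The remaining steps are routine, apart from checking the l.s.c. existence argument and the fact that finiteness of second moments is preserved under $L$.
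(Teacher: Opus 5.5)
Your proof is correct and follows the paper's route: push forward by $L=Q_*^{1/2}$ so the $Q_*$-cost becomes the Euclidean quadratic cost, apply Brenier's theorem to the absolutely continuous $L_\#\mu$, and pull the map back. Your first step is in fact more careful than the paper's proof, which ends by asserting that the pulled-back Brenier coupling is $\Wg$-optimal merely ``since $Q_*$ maximizes'' the max-min problem. That implication fails for minimax problems in general, because an inner minimizer for the maximizing $Q_*$ need not solve the outer problem. What actually justifies both that conclusion and the claim about \emph{every} optimal coupling is your equality chain (every $\Wg$-optimal coupling is $\WQ{Q_*}$-optimal), together with existence of a $\Wg$-optimal coupling and Brenier uniqueness.
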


For a general gauge $\gamma$, this hypothesis is genuinely restrictive: an
active maximizer $Q_*\in\cK$ need not be positive semidefinite, and may even be
indefinite. If $\gamma$ is monotone, however,
Proposition~\ref{prop:psdpolar} allows one to choose the representing set
$\cK$ inside $\mathbb S_+^d$; the active matrix is then automatically positive
semidefinite, and the remaining nondegeneracy assumption is simply that it be
invertible. For Schatten geometries with $1\le p<+\infty$, the condition
$Q_*\in\partial\gamma(\Sigma_{\pi^\star})$ suggests that, generically, $Q_*$
should be positive definite, so that a Brenier-type map is expected to exist.
The endpoint $p=+\infty$ is sharply different: in that case
$\gamma(\Sigma)=\lambda_{\max}(\Sigma)$ and $Q_*$ is supported on the top
eigenspace of $\Sigma_{\pi^\star}$. When the leading eigenvalue is simple,
$Q_*$ is rank one. Thus one should not generally expect a Monge map for the
associated degenerate quadratic cost; rather, it is natural to expect optimal
couplings to remain nonunique, and often diffuse, in directions orthogonal to
the active eigenspace.

\begin{proof}
Fix such a positive definite maximizer $Q_*$. For any coupling $\pi$ between $\mu$ and $\nu$, let
$$
\widetilde\pi=(Q_*^{1/2},Q_*^{1/2})_\#\pi,
$$
and define
$$
\widetilde\mu=(Q_*^{1/2})_\#\mu,
\qquad
\widetilde\nu=(Q_*^{1/2})_\#\nu.
$$
Then
$$
\int (y-x)^\top Q_*(y-x)\,d\pi(x,y)
=
\int \abs{\tilde y-\tilde x}^2\,d\widetilde\pi(\tilde x,\tilde y).
$$
Because $\mu$ is absolutely continuous and $Q_*^{1/2}$ is invertible, $\widetilde\mu$ is also absolutely continuous. Brenier's theorem for the quadratic cost therefore yields a convex potential $u$ such that the unique optimal coupling between $\widetilde\mu$ and $\widetilde\nu$ is induced by the map
$$
\widetilde T(\tilde x)=\nabla u(\tilde x).
$$
Pulling this map back to the original variables gives
$$
T_*(x)=Q_*^{-1/2}\widetilde T(Q_*^{1/2}x)=Q_*^{-1/2}\nabla u(Q_*^{1/2}x),
$$
and the corresponding coupling is optimal for the inner problem associated with $Q_*$. Since $Q_*$ maximizes Theorem~\ref{thm:minmax}, this coupling is also optimal for $\Wg(\mu,\nu)$.
\end{proof}

\section{Gaussian Reductions}
\label{app:gaussians}
\label{app:static-gaussians}
This appendix gathers the Gaussian reductions used in the static geometry and in the linear-network flow.
Gaussian marginals compress the transport problem to a finite-dimensional optimization over admissible covariance blocks.
\medskip
The next proposition shows that, for Gaussian marginals, the infinite-dimensional transport problem collapses to an optimization over the cross-covariance matrix.
\begin{proposition}[Gaussian reduction]
\label{app-thm:gaussian-full}
Let $\mu=\mathcal N(m_0,\Sigma_0)$ and $\nu=\mathcal N(m_1,\Sigma_1)$. Then
$$
\Wg(\mu,\nu)^2
=
\inf_{K}
\gamma\!\left(
(m_1-m_0)(m_1-m_0)^\top
+
\Sigma_0+\Sigma_1-K-K^\top
\right),
$$
where the infimum runs over matrices $K$ such that
$$
\begin{pmatrix}
\Sigma_0 & K \\
K^\top & \Sigma_1
\end{pmatrix}
\succeq 0.
$$
In particular, for centered Gaussians the covariance cost
$$
\BWg(\Sigma_0,\Sigma_1)^2
\coloneqq 
\inf_{K:\,
\left[\begin{smallmatrix}
\Sigma_0 & K\\
K^\top & \Sigma_1
\end{smallmatrix}\right]\succeq 0}
\gamma(\Sigma_0+\Sigma_1-K-K^\top)
$$
is well defined on the cone of covariance matrices. In the monotone regime of Section~\ref{sec:dynamic}, it is the restriction of the metric $\Wg$ to centered Gaussian laws and therefore defines a metric on covariance matrices.
\end{proposition}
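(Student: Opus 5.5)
The plan is to show that the objective in Definition~\ref{def:static} depends on a coupling $\pi\in\Pi(\mu,\nu)$ only through its second-order moments, and then to show that every admissible block covariance is realized by a jointly Gaussian coupling. Fix $\pi\in\Pi(\mu,\nu)$ and write $x=m_0+\bar x$, $y=m_1+\bar y$. Expanding gives
\[
\Sigma_\pi=\int (y-x)(y-x)^\top d\pi=(m_1-m_0)(m_1-m_0)^\top+\Sigma_0+\Sigma_1-K_\pi-K_\pi^\top,
\qquad K_\pi\coloneqq\int \bar x\bar y^\top d\pi .
\]
Here the cross terms with the mean displacement vanish because $\bar x$ and $\bar y$ are centered under their marginals. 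This computation is the same one as in the proof of Proposition~\ref{prop:large-mean-schatten}. Since $\pi$ has finite second moments, the joint covariance $\left(\begin{smallmatrix}\Sigma_0&K_\pi\\K_\pi^\top&\Sigma_1\end{smallmatrix}\right)$ of $(\bar x,\bar y)$ is positive semidefinite. Hence $\gamma(\Sigma_\pi)$ is at least the infimum on the right-hand side, and taking the infimum over $\pi$ gives ``$\ge$''.

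For the reverse inequality, take any $K$ such that the block matrix $M_K\coloneqq\left(\begin{smallmatrix}\Sigma_0&K\\K^\top&\Sigma_1\end{smallmatrix}\right)$ is PSD. Let $\pi_K\coloneqq\mathcal N\bigl((m_0,m_1),M_K\bigr)$ on $\R^d\times\R^d$. This is a well-defined (possibly degenerate) Gaussian law, since only $M_K\succeq0$ is required. Its marginals are $\mathcal N(m_0,\Sigma_0)=\mu$ and $\mathcal N(m_1,\Sigma_1)=\nu$, so $\pi_K\in\Pi(\mu,\nu)$, and its centered cross-covariance is exactly $K$. The displayed identity then gives $\gamma(\Sigma_{\pi_K})$ equal to the objective evaluated at $K$, so $\Wg(\mu,\nu)^2$ is at most that value for every admissible $K$. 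This proves ``$\le$'' and therefore equality.

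No step is a real obstacle. The only points to check are that degenerate covariances ($\Sigma_i$ singular) cause no trouble, and they do not, because Gaussian laws with PSD covariance always exist. I would also note that the infimum is attained. The admissible set of $K$ is compact, since $\|K\|$ is bounded by $\|\Sigma_0\|^{1/2}\|\Sigma_1\|^{1/2}$ through the Schur-type bound $K=\Sigma_0^{1/2}C\Sigma_1^{1/2}$ with $\|C\|_{op}\le1$, and $\gamma$ is continuous, so the infimum is a minimum. The consequence in Proposition~\ref{app-thm:gaussian-full} about $\BWg$ follows by setting $m_0=m_1=0$. Its metric property on covariances is inherited from Corollary~\ref{cor:metric}, or from Proposition~\ref{prop:metric-static}, restricted to centered Gaussians; the map $\Sigma\mapsto\mathcal N(0,\Sigma)$ is injective, which gives separation.
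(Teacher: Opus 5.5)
Your proposal is correct and follows the paper's argument: the cost depends on a coupling only through its cross-covariance $K_\pi$, which satisfies the block PSD constraint, and every admissible $K$ is realized by a jointly Gaussian coupling. Your explicit mean-splitting expansion, the attainment remark, and the inheritance of the metric property from Corollary~\ref{cor:metric} via injectivity of $\Sigma\mapsto\mathcal N(0,\Sigma)$ fill in details that the paper leaves implicit.
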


\begin{proof}
Let $(X,Y)$ be any coupling of the two Gaussian marginals. 
Every coupling has a cross-covariance $K=\mathbb E[(X-m_0)(Y-m_1)^\top]$ satisfying the block PSD constraint, the objective depends only on $K$, and conversely every feasible
block covariance is realized by a jointly Gaussian coupling. Hence restricting to Gaussian couplings loses nothing.
The covariance matrix of $(X,Y)$ is
$$
\begin{pmatrix}
\Sigma_0 & K\\
K^\top & \Sigma_1
\end{pmatrix}.
$$
This block matrix must be positive semidefinite. Conversely, every such block positive semidefinite matrix defines a jointly Gaussian vector with marginals $\mu$ and $\nu$.

For any admissible $K$, the displacement covariance equals
$$
(m_1-m_0)(m_1-m_0)^\top+\Sigma_0+\Sigma_1-K-K^\top.
$$
Therefore the generalized cost of a Gaussian coupling depends only on $K$. Optimizing over Gaussian couplings is exactly the same as optimizing over the block positive semidefinite constraint, which proves the formula.
\end{proof}

The next corollary shows that commuting covariances lead to a closed form for the Schatten family, exactly as in the classical Bures case.
\begin{corollary}[Commuting covariances]
\label{app-cor:commuting}
If $\Sigma_0$ and $\Sigma_1$ commute, then for Schatten norms $\gamma_p$ one has
$$
\mathsf B_{\gamma_p}(\Sigma_0,\Sigma_1)^2
=
\gamma_p\!\left(
(\Sigma_0^{1/2}-\Sigma_1^{1/2})^2
\right).
$$
Equivalently,
$$
\mathsf B_{\gamma_p}(\Sigma_0,\Sigma_1)^2
=
\left(
\sum_{i=1}^d \abs{\sqrt{\lambda_i(\Sigma_0)}-\sqrt{\lambda_i(\Sigma_1)}}^{2p}
\right)^{1/p}.
$$
When $p=1$, this is the usual Bures--Wasserstein formula.
\end{corollary}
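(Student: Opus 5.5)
The plan is a sandwich argument. For the upper bound we exhibit an explicit admissible cross-covariance $K$ in the Gaussian reduction of Proposition~\ref{app-thm:gaussian-full}. For the lower bound we use a dual certificate $Q$ from the Schatten representing set of Remark~\ref{rem-kgamma-shatten}.

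Throughout, since $\Sigma_0$ and $\Sigma_1$ commute, we fix a common orthonormal eigenbasis in which $\Sigma_0=\diag(a_i)$ and $\Sigma_1=\diag(b_i)$. The $\lambda_i(\Sigma_0),\lambda_i(\Sigma_1)$ in the statement are understood as the eigenvalues paired along this basis. We set $D\coloneqq\Sigma_0^{1/2}-\Sigma_1^{1/2}=\diag(\sqrt{a_i}-\sqrt{b_i})$.

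\textbf{Upper bound.} Take $K=\Sigma_0^{1/2}\Sigma_1^{1/2}$. It is symmetric because the two factors commute. The block matrix $\left(\begin{smallmatrix}\Sigma_0&K\\K^\top&\Sigma_1\end{smallmatrix}\right)$ equals $\left(\begin{smallmatrix}\Sigma_0^{1/2}\\ \Sigma_1^{1/2}\end{smallmatrix}\right)\left(\begin{smallmatrix}\Sigma_0^{1/2}& \Sigma_1^{1/2}\end{smallmatrix}\right)$, so it is PSD and $K$ is admissible. With this choice,
\[
\Sigma_0+\Sigma_1-2K=(\Sigma_0^{1/2}-\Sigma_1^{1/2})^2=D^2 ,
\]
and therefore $\mathsf B_{\gamma_p}(\Sigma_0,\Sigma_1)^2\le\gamma_p(D^2)=\bigl(\sum_i|\sqrt{a_i}-\sqrt{b_i}|^{2p}\bigr)^{1/p}$.

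\textbf{Lower bound.} For any $Q\in\mathcal K_{\gamma_p}$ and any admissible $K$ we have
\[
\gamma_p(\Sigma_0+\Sigma_1-K-K^\top)\ge\tr(Q\Sigma_0)+\tr(Q\Sigma_1)-2\tr(QK).
\]
Two steps remain.
\begin{itemize}
\item \emph{Bounding $\tr(QK)$.} Block positivity means exactly that $K=\Sigma_0^{1/2}C\Sigma_1^{1/2}$ for some contraction $\|C\|_{S_\infty}\le1$ (use pseudo-inverses if the covariances are singular). Then $\tr(QK)=\tr(\Sigma_1^{1/2}Q\Sigma_0^{1/2}C)\le\|\Sigma_1^{1/2}Q\Sigma_0^{1/2}\|_{S_1}$.
\item \emph{Choosing $Q$.} We take $Q$ diagonal in the common basis. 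Then $\Sigma_1^{1/2}Q\Sigma_0^{1/2}$ is PSD and diagonal, so its trace norm equals $\tr(Q\Sigma_0^{1/2}\Sigma_1^{1/2})$. This gives $\gamma_p(\cdots)\ge\tr(QD^2)$ uniformly in $K$.
\end{itemize}
It remains to pick $Q$ as the Hölder-dual certificate of $D^2$:
\begin{itemize}
\item for $1<p<\infty$, $Q=D^{2(p-1)}/\|D^2\|_{S_p}^{p-1}$, which satisfies $\|Q\|_{S_q}=1$ and $\tr(QD^2)=\|D^2\|_{S_p}$;
\item for $p=1$, $Q=\Id$;
\item for $p=\infty$, $Q=e_ke_k^\top$ with $k$ an index maximizing $|\sqrt{a_i}-\sqrt{b_i}|$, which is a density matrix.
\end{itemize}
The trivial case $D=0$ is handled by $Q=0$. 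In every case $Q\succeq0$ is diagonal in the common basis and lies in $\mathcal K_{\gamma_p}$, so the lower bound $\gamma_p(D^2)$ matches the upper bound.

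\textbf{Main obstacle.} The delicate step is the bound on $\tr(QK)$. It is here that commutativity is used essentially: it makes $\Sigma_1^{1/2}Q\Sigma_0^{1/2}$ PSD, so that the trace-norm bound is attained by $C=\Id$. We also have to handle singular covariances in the factorization $K=\Sigma_0^{1/2}C\Sigma_1^{1/2}$, which we do via the standard range inclusion for PSD block matrices or by a perturbation $\Sigma_i+\varepsilon\Id$ followed by continuity of both sides. Finally, the case $p=1$ recovers the classical Bures--Wasserstein formula, since $\tr(D^2)=\tr\Sigma_0+\tr\Sigma_1-2\tr(\Sigma_0^{1/2}\Sigma_1^{1/2})$.
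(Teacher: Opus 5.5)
Your proof is correct. It differs from the paper's on the optimality step, where it is also more complete.

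\textbf{Comparison with the paper.} The paper's argument is a primal sketch. It simultaneously diagonalizes, asserts that the block PSD constraint ``decouples coordinatewise,'' and declares $K=\diag(\sqrt{a_ib_i})$ optimal. Taken literally, that decoupling is false: admissible $K$ may have off-diagonal entries, and for $p>1$ the gauge $\gamma_p$ does see the off-diagonal part of $\Sigma_0+\Sigma_1-K-K^\top$. So the paper leaves the lower bound implicit.

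Your upper bound coincides with the paper's, since your $K=\Sigma_0^{1/2}\Sigma_1^{1/2}$ is exactly its diagonal choice. You then close the gap with a dual certificate from the representing set, in the spirit of Theorem~\ref{thm:minmax}. This gives a full proof and identifies the optimal certificate as the H\"older dual of $D^2$, consistent with $Q^\star\in\partial\gamma(\Sigma_{\pi^\star})$. It also extends to any monotone unitarily invariant gauge: pinching a maximizing $Q\in\cPol\cap\mathbb S_+^d$ to its diagonal in the common basis keeps it admissible and leaves $\tr(QD^2)$ unchanged.

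\textbf{A simplification.} The step you flag as the main obstacle is easier than you make it. Once $Q=\diag(q_i)$ is diagonal in the common basis, $\tr(QK)=\sum_i q_iK_{ii}$ involves only the diagonal of $K$. The $2\times2$ principal minors $\left(\begin{smallmatrix}a_i&K_{ii}\\K_{ii}&b_i\end{smallmatrix}\right)\succeq0$ then give $K_{ii}\le\sqrt{a_ib_i}$ directly. You therefore need neither the contraction factorization nor any special treatment of singular covariances. This is exactly the rigorous content behind the paper's ``decoupling'' remark.

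\textbf{Minor points.} Write $(D^2)^{p-1}$ rather than $D^{2(p-1)}$, since $D$ can have negative entries and $p-1$ need not be an integer. Your observation that the eigenvalues in the second formula must be paired along the common eigenbasis, not sorted independently, is correct and worth keeping.
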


\begin{proof}
If $\Sigma_0$ and $\Sigma_1$ commute, they are simultaneously diagonalizable. In that basis the block PSD constraint decouples coordinatewise, and the optimal choice is $K=\diag(\sqrt{a_i b_i})$. The resulting displacement covariance is $\diag((\sqrt{a_i}-\sqrt{b_i})^2)$.
\end{proof}

The following remark clarifies the scope of Corollary~\ref{app-cor:commuting}.
\begin{remark}
The commuting formula is stated for Schatten norms because their value depends only on the eigenvalues of the displacement covariance. For more general norms on $\mathbb S_+^d$, even when $\Sigma_0$ and $\Sigma_1$ commute, one should not expect a closed form depending only on the eigenvalues $(\sqrt{a_i}-\sqrt{b_i})^2$, since the norm itself may retain basis-dependent information.
\end{remark}

%%%%%%%%
\section{Dynamic Formulation Details}
\label{app:dynamic-details}
This appendix gathers the convex momentum formulation and the proofs deferred from Section~\ref{sec:dynamic}.

\subsection{Convex momentum reformulation}

Passing to momenta linearizes the constraint. If $\mu=\rho\lambda$ and $m=w\lambda$ for some reference measure $\lambda$, define
$$
\cA(\mu,m)
\coloneqq 
\sup_{Q\in\cK}
\int_{\R^d}\frac{w(x)^\top Q\,w(x)}{\rho(x)}\,d\lambda(x),
$$
with the usual perspective convention on $\{\rho=0\}$.

The following proposition shows that the momentum formulation is convex and exactly matches the velocity action.

\begin{proposition}[Convex momentum action]
\label{prop:momentum}
The action $\cA(\mu,m)$ is intrinsic, convex, and weak-* lower semicontinuous. If $m=\mu v$, then
$$
\cA(\mu,m)
=
\gamma\!\left(
\int v(x)v(x)^\top\,d\mu(x)
\right).
$$
Hence
$$
\Wgbb(\mu_0,\mu_1)^2
=
\inf_{(\mu_t,m_t)}
\int_0^1 \cA(\mu_t,m_t)\,dt
$$
under the linear constraint
$$
\partial_t\mu_t+\divv(m_t)=0.
$$
\end{proposition}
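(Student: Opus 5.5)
The plan is to treat $\cA$ as a supremum over $Q\in\cK$ of quadratic Benamou--Brenier actions and to inherit every property from the classical case $Q=\Id$.

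\textbf{Step 1: intrinsic form.} For fixed $Q\succeq0$ (recall $\cK\subset\mathbb S_+^d$ by Proposition~\ref{prop:psdpolar}), consider the perspective integrand
\[
f_Q(\rho,w)=\frac{w^\top Qw}{\rho}\ \ (\rho>0),\qquad f_Q(0,0)=0,\qquad f_Q=+\infty \text{ otherwise}.
\]
This integrand is jointly convex, lower semicontinuous and positively one-homogeneous in $(\rho,w)$. The last convention needs care when $Q$ is singular: for $w\in\ker Q$ I would still set the value to $+\infty$ when $\rho=0$, $w\neq0$. Alternatively, I would require $m\ll\mu$ separately, which is implied by finiteness of the sup as soon as $\cK$ contains some $Q\succ0$. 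One-homogeneity makes $\int f_Q(d\mu/d\lambda,dm/d\lambda)\,d\lambda$ independent of the dominating measure $\lambda$. Taking the supremum over $Q$ preserves this, so $\cA$ is intrinsic.

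\textbf{Step 2: convexity and lower semicontinuity.} I would use the standard dual representation
\[
\int f_Q\,d\lambda=\sup\Bigl\{\int a\,d\mu+\int b\cdot dm:\ a+\tfrac14 b^\top Q^{-1}b\le0\Bigr\}
\]
over continuous bounded $(a,b)$. For singular $Q$, the constraint is read as $b\in\operatorname{range}Q$ with the pseudo-inverse, or one takes the limit of $Q+\varepsilon\Id$ by monotone convergence. This exhibits each action as a supremum of weak-* continuous linear functionals of $(\mu,m)$. So $\cA$, as a double supremum of linear functionals, is convex and weak-* lsc. Approximating $Q$ by $Q+\varepsilon\Id$ avoids verifying the singular duality directly: the actions increase as $\varepsilon\downarrow0$ to the limit, and a supremum of lsc convex functionals is lsc convex.

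\textbf{Step 3: identity and equivalence with $\Wgbb$.} If $m=\mu v$, take $\lambda=\mu$, $\rho=1$, $w=v$. Then
\[
\cA=\sup_{Q\in\cK}\int v^\top Qv\,d\mu=\sup_{Q\in\cK}\tr\Bigl(Q\int vv^\top d\mu\Bigr)=\gamma\Bigl(\int vv^\top d\mu\Bigr)
\]
by Definition~\ref{def:representing-set}. For the equivalence, any admissible $(\mu_t,v_t)$ gives $m_t=\mu_tv_t$ with equal action. Conversely, if $\int_0^1\cA(\mu_t,m_t)\,dt<\infty$, then a.e.\ $\cA(\mu_t,m_t)<\infty$. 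Using the comparison $\gamma\ge c_\gamma\tr$ (Proposition~\ref{prop:bounds}), equivalently $c_\gamma\Id$ being dominated inside the supremum, the classical action $\int|w|^2/\rho$ is finite. Hence $m_t\ll\mu_t$ with $v_t=dm_t/d\mu_t\in L^2(\mu_t)$, and we return to the velocity form.

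\textbf{Main obstacle.} I expect the main obstacle to be the singular-$Q$ / degenerate case. That is, showing $m\ll\mu$ from finiteness requires the lower bound $\cA\ge c\int|w|^2/\rho$, and this must be justified via the nondegeneracy of $\gamma$ rather than via any single $Q$. I would handle it by noting $\gamma(S)\ge c_\gamma\tr S$ and passing through the velocity identity on $\{\rho>0\}$, together with a separate argument for the singular part of $m$ using Step 1's convention.
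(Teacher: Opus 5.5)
Your proof follows the paper's structure. You use a perspective integrand for each $Q$, one-homogeneity for intrinsicness, $\lambda=\mu$ to get $\cA(\mu,\mu v)=\gamma(\int vv^\top d\mu)$, and a supremum over $\cK$ for convexity and lsc. Where the paper appeals to the general lsc theorem for one-homogeneous integrands, you use the explicit Benamou--Brenier dual. Your converse (finite momentum action gives $m_t\ll\mu_t$ and $v_t\in L^2(\mu_t)$) is more explicit than the paper's.

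\textbf{The gap is in Step~2, the lsc argument.} The route you prefer fails. Since $f_{Q+\varepsilon\Id}=f_Q+\varepsilon|w|^2/\rho$, the actions \emph{decrease} as $\varepsilon\downarrow0$, so $\int f_Q$ is an infimum of lsc functionals, which need not be lsc. The limit is not even $\int f_Q$ when $\int v^\top Qv\,d\mu<\infty=\int|v|^2\,d\mu$.

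The pseudo-inverse dual does give an lsc functional, but it is the closed perspective, which vanishes at $(0,w)$ for $w\in\ker Q$. That is not your Step~1 integrand. For singular $Q$, the Step~1 integrand is convex but \emph{not} lsc. For example, take $Q=e_1e_1^\top$, $\mu_n=(1-\frac1n)\delta_y+\frac1n\delta_0$ with $y\neq0$, and $m_n=e_2\delta_0$: each pair has zero $Q$-action, but the limit has infinite $Q$-action. So Step~2 proves lsc for a possibly different functional from the one defined in Step~1. The paper's proof makes the same slip when it calls the $+\infty$ convention the closed extension.

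\textbf{The missing ingredient is one you state only conditionally: $\cK$ always contains a positive definite $Q_0$.}
\begin{itemize}
\item If some $u\neq0$ lay in $\ker Q$ for every $Q\in\cK$, then $\gamma(uu^\top)=\max_{Q\in\cK}u^\top Qu=0$, contradicting that $\gamma$ is a norm.
\item Hence finitely many $Q_1,\dots,Q_k\in\cK$ satisfy $\bigcap_i\ker Q_i=\{0\}$, and $Q_0=\frac1k\sum_iQ_i\in\cK\cap\Sppd$.
\end{itemize}

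\textbf{How this closes the gap.}
\begin{itemize}
\item The map $Q\mapsto\int f_Q\,d\lambda$ is affine with values in $[0,+\infty]$, in either convention. Testing with $(1-\varepsilon)Q+\varepsilon Q_0\in\cK\cap\Sppd$ therefore shows $\cA=\sup_{Q\in\cK\cap\Sppd}\int f_Q\,d\lambda$.
\item For positive definite $Q$ the two conventions agree, so your dual formula applies directly and gives convexity and weak-* lsc. Take the test pairs in $C_c$ rather than $C_b$, to get weak-* rather than merely narrow continuity.
\item The bound $\cA\ge\lambda_{\min}(Q_0)\int|w|^2/\rho\,d\lambda$ gives $m\ll\mu$ and $v\in L^2(\mu)$ at once.
\end{itemize}
This last bound replaces your vaguer ``$c_\gamma\Id$ dominated inside the supremum''. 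Going through $\gamma\ge c_\gamma\tr$ alone would also need a truncation $v\mathbf 1_{|v|\le n}$ when $v\notin L^2(\mu)$.
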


\begin{proof}
Let $\lambda$ be any positive measure dominating both $\mu$ and $m$, and write
$\mu=\rho\lambda$ and $m=w\lambda$, with $\rho\ge0$ and
$w:\R^d\to\R^d$. For $Q\succeq0$, define the perspective quadratic integrand
\[
    \phi_Q(\rho,w)
    \coloneqq
    \begin{cases}
        \dfrac{w^\top Qw}{\rho}, & \rho>0, \\[0.8em]
        0, & \rho=0 \text{ and } w=0, \\[0.3em]
        +\infty, & \rho=0 \text{ and } w\neq0.
    \end{cases}
\]
Then $\phi_Q$ is convex and lower semicontinuous on
$\R_+\times\R^d$. Indeed, for $\rho>0$ it is the perspective of the convex
quadratic form $w\mapsto w^\top Qw$, and the above convention is its closed
extension at $\rho=0$.

The momentum action may be written as
\[
    \cA(\mu,m)
    =
    \sup_{Q\in\cK}
    \int \phi_Q(\rho(x),w(x))\,d\lambda(x).
\]
This formula is intrinsic, i.e., independent of the dominating measure
$\lambda$. To see this, choose another dominating measure $\lambda'$ and write
$\mu=\rho'\lambda'$ and $m=w'\lambda'$. Passing to a common dominating measure,
for instance $\eta=\lambda+\lambda'$, and using the one-homogeneity of
$\phi_Q$, namely $\phi_Q(a\rho,aw)=a\phi_Q(\rho,w)$ for $a\ge0$, gives the same
value of $\int\phi_Q\,d\lambda$ and $\int\phi_Q\,d\lambda'$. Thus
$\cA(\mu,m)$ depends only on the pair $(\mu,m)$.

We next prove the absolutely continuous representation. If $m=\mu v$, then
one may take $\lambda=\mu$, so that $\rho=1$ and $w=v$. Hence, for every
$Q\in\cK$,
\[
    \int \phi_Q(1,v(x))\,d\mu(x)
    =
    \int v(x)^\top Qv(x)\,d\mu(x)
    =
    \tr\!\left(
        Q\int v(x)v(x)^\top\,d\mu(x)
    \right).
\]
Taking the supremum over $Q\in\cK$ and using the support representation of
$\gamma$ yields
\[
    \cA(\mu,\mu v)
    =
    \sup_{Q\in\cK}
    \tr\!\left(
        Q\int vv^\top\,d\mu
    \right)
    =
    \gamma\!\left(
        \int vv^\top\,d\mu
    \right).
\]
Conversely, if $\cA(\mu,m)<+\infty$, then for every $Q\in\cK$ the integral
$\int\phi_Q(\rho,w)\,d\lambda$ is finite. In particular, the singular case
$\rho=0$ and $w\neq0$ cannot occur on a set of positive $\lambda$-measure.
Thus $m$ is absolutely continuous with respect to $\mu$, and $m=\mu v$ with
$v=w/\rho$ on $\{\rho>0\}$.

Convexity follows from the same perspective formulation. For fixed
$Q\succeq0$, the map $(\mu,m)\mapsto\int\phi_Q(\rho,w)\,d\lambda$ is convex,
because $\phi_Q$ is convex and one-homogeneous. Taking the supremum over
$Q\in\cK$ preserves convexity, hence $(\mu,m)\mapsto\cA(\mu,m)$ is convex.

Finally, we prove weak-* lower semicontinuity. For each fixed
$Q\succeq0$, the functional
\[
    (\mu,m)\longmapsto
    \int \phi_Q\!\left(\frac{d\mu}{d\lambda},
                       \frac{dm}{d\lambda}\right)d\lambda
\]
is lower semicontinuous under weak-* convergence of measures, by the standard
lower semicontinuity theorem for convex, lower semicontinuous,
one-homogeneous integral functionals. Since $\cA$ is the supremum over
$Q\in\cK$ of these lower semicontinuous functionals, $\cA$ is also weak-*
lower semicontinuous.

The dynamic formula now follows by rewriting the velocity formulation in
momentum variables. If $(\mu_t,v_t)$ satisfies
$\partial_t\mu_t+\divv(\mu_t v_t)=0$ and $m_t=\mu_t v_t$, then the constraint
becomes the linear continuity equation
$\partial_t\mu_t+\divv(m_t)=0$, and the previous identity gives
\[
    \MN_{\mu_t}(v_t)^2
    =
    \gamma\!\left(\int v_t v_t^\top\,d\mu_t\right)
    =
    \cA(\mu_t,m_t).
\]
Therefore the dynamic action
$\int_0^1\MN_{\mu_t}(v_t)^2\,dt$ is exactly
$\int_0^1\cA(\mu_t,m_t)\,dt$ in momentum variables. Taking the infimum over
all admissible curves gives
\[
    \Wgbb(\mu_0,\mu_1)^2
    =
    \inf_{(\mu_t,m_t)}
    \int_0^1 \cA(\mu_t,m_t)\,dt,
\]
under the linear constraint
$\partial_t\mu_t+\divv(m_t)=0$.
\end{proof}

\subsection{Proof of Theorem~\ref{thm:staticdynamic} (static/dynamic equivalence)}

\begin{proof}
Let $\pi\in\Pi(\mu_0,\mu_1)$ and consider the displacement interpolation
$$
\mu_t=((1-t)x+ty)_\#\pi.
$$
The velocity along each segment is $y-x$, so
$$
\int v_t v_t^\top\,d\mu_t
=
\int (y-x)(y-x)^\top\,d\pi
$$
for all $t$. Therefore
$$
\Wgbb(\mu_0,\mu_1)^2\le \Wg(\mu_0,\mu_1)^2.
$$

Conversely, take any admissible dynamic plan $(\mu_t,v_t)$. By the superposition principle, there exists a probability measure $\eta$ on absolutely continuous paths $\gamma$ such that
$$
\mu_t=(e_t)_\#\eta,
\qquad
\dot\gamma_t=v_t(\gamma_t)
$$
for $\eta$-a.e.\ path and a.e.\ $t$. Let $\pi=(e_0,e_1)_\#\eta$. For every $Q\in\cK$,
$$
\int_0^1 \int v_t^\top Q v_t\,d\mu_t\,dt
=
\int \int_0^1 \dot\gamma_t^\top Q \dot\gamma_t\,dt\,d\eta(\gamma).
$$
This is the key point where monotonicity is used: by Proposition~\ref{prop:psdpolar} we have chosen $\cK\subset \mathbb S_+^d$, so every test matrix satisfies $Q\succeq 0$. Without this reduction one would have to work with possibly indefinite matrices, and the quadratic Jensen inequality below would no longer apply.
Applying Jensen to the scalar function $t\mapsto Q^{1/2}\dot\gamma_t$ gives
$$
\int_0^1 \dot\gamma_t^\top Q \dot\gamma_t\,dt
\ge
(\gamma_1-\gamma_0)^\top Q(\gamma_1-\gamma_0).
$$
Hence
$$
\int_0^1 \gamma\!\left(\int v_t v_t^\top\,d\mu_t\right)\,dt
\ge
\int (y-x)^\top Q(y-x)\,d\pi(x,y)
$$
for every $Q\in\cK$. Taking the supremum over $Q$ yields
$$
\int_0^1 \gamma\!\left(\int v_t v_t^\top\,d\mu_t\right)\,dt
\ge
\gamma\!\left(\int (y-x)(y-x)^\top\,d\pi\right).
$$
Finally, take the infimum over admissible dynamic plans.
\end{proof}

\subsection{Proof of Corollary~\ref{cor:geodesic} ($\Wg$ geodesics)}

\begin{proof}
The upper bound follows by restricting the same displacement interpolation to the time interval $[s,t]$ and reparameterizing it to $[0,1]$. Indeed, the rescaled velocity is $(t-s)(y-x)$, so Theorem~\ref{thm:staticdynamic} gives
$$
\Wg(\mu_s,\mu_t)^2\le (t-s)^2 \gamma\!\left(\int (y-x)(y-x)^\top\,d\pi_*(x,y)\right)
=(t-s)^2\Wg(\mu_0,\mu_1)^2.
$$
Applying the same argument to the segments $[0,s]$ and $[t,1]$ yields
$$
\Wg(\mu_0,\mu_s)\le s\Wg(\mu_0,\mu_1),
\qquad
\Wg(\mu_t,\mu_1)\le (1-t)\Wg(\mu_0,\mu_1).
$$
By the triangle inequality from Corollary~\ref{cor:metric},
$$
\Wg(\mu_0,\mu_1)
\le
\Wg(\mu_0,\mu_s)+\Wg(\mu_s,\mu_t)+\Wg(\mu_t,\mu_1)
\le
\Wg(\mu_s,\mu_t)+(1-(t-s))\Wg(\mu_0,\mu_1).
$$
Rearranging gives
$$
\Wg(\mu_s,\mu_t)\ge (t-s)\Wg(\mu_0,\mu_1),
$$
hence equality holds.
\end{proof}

\subsection{Proof of Proposition~\ref{prop:formalGF} ($\Wg$ gradient flow)}
\label{sec:proof-formal-gamma-flow}

\begin{proof}
We argue at the formal level, assuming enough regularity to justify the
differentiations, integrations by parts, and infinitesimal identifications
below. Let $t\mapsto\mu_t$ be represented by a continuity equation
$\partial_t\mu_t+\divv(\mu_t v_t)=0$. By the dynamic formulation of
Theorem~\ref{thm:staticdynamic}, the metric speed associated with $\Wg$ is the
spectral tangent norm, hence
\[
    |\mu'|_{\gamma}(t)=\MN_{\mu_t}(v_t).
\]
In particular, the tangent space at $\mu$ is equipped with the norm
$v\mapsto\MN_\mu(v)$.

We next identify the metric slope. For a tangent perturbation generated by
$w$, namely $\partial_s\mu_s+\divv(\mu_s w)=0$ at $s=0$ with $\mu_0=\mu$, the
first variation formula gives
\[
    \frac{d}{ds}\MF(\mu_s)\Big|_{s=0}
    =
    \int g_\mu(x)\cdot w(x)\,d\mu(x),
    \qquad
    g_\mu=\nabla_x\frac{\delta\MF}{\delta\mu}.
\]
Therefore the descending local slope is the dual norm of the linear functional
$w\mapsto-\int g_\mu\cdot w\,d\mu$ with respect to $\MN_\mu$, namely
\[
    |\partial\MF|_{\gamma}(\mu)
    =
    \sup_{\MN_\mu(w)\le1}
    \left\{
        -\int g_\mu(x)\cdot w(x)\,d\mu(x)
    \right\}.
\]
Equivalently, by Fenchel duality,
\[
    \frac12|\partial\MF|_{\gamma}(\mu)^2
    =
    \sup_w
    \left\{
        -\int g_\mu(x)\cdot w(x)\,d\mu(x)
        -
        \frac12\MN_\mu(w)^2
    \right\}.
\]
The maximizer in this dual formulation is the same as the minimizer
\[
    \MJ_\gamma^\mu(g_\mu)
    \in
    \operatorname*{argmin}_{w}
    \left\{
        \int g_\mu(x)\cdot w(x)\,d\mu(x)
        +
        \frac12\MN_\mu(w)^2
    \right\},
\]
which is precisely the Measure-LMO of Definition~\ref{def:duality}.

Now take $v_t=\MJ_\gamma^{\mu_t}(g_{\mu_t})$. The optimality relation gives
\[
    \int g_{\mu_t}\cdot v_t\,d\mu_t
    +
    \frac12\MN_{\mu_t}(v_t)^2
    =
    -\frac12|\partial\MF|_{\gamma}(\mu_t)^2.
\]
Moreover, since the functional being minimized is
$w\mapsto \int g_{\mu_t}\cdot w\,d\mu_t+\frac12\MN_{\mu_t}(w)^2$, the
minimizer realizes the Riesz-type identity
$\MN_{\mu_t}(v_t)=|\partial\MF|_{\gamma}(\mu_t)$. Hence
\[
    \int g_{\mu_t}\cdot v_t\,d\mu_t
    =
    -\frac12\MN_{\mu_t}(v_t)^2
    -
    \frac12|\partial\MF|_{\gamma}(\mu_t)^2.
\]
Using the chain rule along the continuity equation,
\[
    \frac{d}{dt}\MF(\mu_t)
    =
    \int g_{\mu_t}(x)\cdot v_t(x)\,d\mu_t(x),
\]
and the metric-speed identity
$|\mu'|_{\gamma}(t)=\MN_{\mu_t}(v_t)$, we obtain
\[
    \frac{d}{dt}\MF(\mu_t)
    =
    -\frac12|\mu'|_{\gamma}(t)^2
    -
    \frac12|\partial\MF|_{\gamma}(\mu_t)^2.
\]
This is exactly the energy dissipation identity for a curve of maximal slope.
Finally, substituting
$v_t=\MJ_\gamma^{\mu_t}(g_{\mu_t})$ into the continuity equation gives
\eqref{eq:measure-spectral-flow}, completing the formal identification.
\end{proof}

\section{Gaussian-Preserving Flows and Linear Networks}
\label{app:gaussian-linear}
This section isolates a tractable regime in which the spectral flow reduces from an infinite-dimensional PDE to finite-dimensional covariance dynamics. The general Gaussian-preservation principle comes first, and the rest of the section specializes it to centered Gaussian training of linear two-layer networks.

\subsection{Gaussian-Preserving Gradient Flows}
\label{app-sec:gaussflow}
This subsection isolates a simple class of spectral gradient flows that can still be analyzed rather explicitly. In general, understanding the long-time behavior of the flow is hard, but when Gaussian measures are preserved the infinite-dimensional evolution reduces to a finite-dimensional ODE on means and covariances.

The next corollary explains why Gaussian preservation transfers from the classical $\Wtwo$ flow to the spectral flow whenever the inverse-trace minimizer remains invertible on the Gaussian class.

\begin{corollary}[From $\Wtwo$ Gaussian preservation to spectral Gaussian preservation]
\label{app-cor:w2-to-spectral-paper}
Assume that for every Gaussian state $\mu$ the classical Wasserstein gradient
$$
g_\mu(x)=\nabla_x\frac{\delta \MF}{\delta\mu}(x)
$$
is affine in $x$, and that there exists an invertible inverse-trace minimizer
$$
Q_\mu^*\in \operatorname*{argmin}_{Q\in\cK\cap\Sppd}
\tr\!\bigl(Q^{-1}\mathcal{S}_\mu(g_\mu)\bigr).
$$
Then the spectral flow
$$
\partial_t\mu_t+\divv\!\bigl(\mu_t \MJ_\gamma^{\mu_t}(g_{\mu_t})\bigr)=0
$$
preserves Gaussian measures.
\end{corollary}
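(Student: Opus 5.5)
The plan is to show that the spectral velocity field is itself affine on Gaussian states, and then use the standard fact that an affine flow pushes a Gaussian forward to a Gaussian. The argument has three steps, done at the same formal level as Theorem~\ref{prop:formalGF}.

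\textbf{Step 1: the velocity is affine.} Fix a Gaussian state $\mu=\mathcal N(m,\Sigma)$. By hypothesis $g_\mu(x)=A_\mu x+b_\mu$ for some matrix $A_\mu$ and vector $b_\mu$. Since $\mu$ has finite second moments, $g_\mu\in L^2(\mu;\R^d)$ and
\[
\mathcal S_\mu(g_\mu)=A_\mu(\Sigma+mm^\top)A_\mu^\top+A_\mu m b_\mu^\top+b_\mu m^\top A_\mu^\top+b_\mu b_\mu^\top
\]
is finite. The hypothesis provides an invertible minimizer $Q_\mu^*$. Theorem~\ref{thm:selector-structure-paper} then says the Measure-LMO can be taken as
\[
\MJ_\gamma^\mu(g_\mu)(x)=-(Q_\mu^*)^{-1}A_\mu x-(Q_\mu^*)^{-1}b_\mu=:\tilde A_\mu x+\tilde b_\mu ,
\]
which is affine in $x$. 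The coefficients $\tilde A_\mu,\tilde b_\mu$ depend on $\mu$ only through $(m,\Sigma)$.

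\textbf{Step 2: a closed ODE on the Gaussian class.} Make the ansatz $\mu_t=\mathcal N(m_t,\Sigma_t)$, and write $\tilde A(t)=\tilde A_{\mu_t}$, $\tilde b(t)=\tilde b_{\mu_t}$. Along the characteristics $\dot x=\tilde A(t)x+\tilde b(t)$, the mean and covariance evolve by
\[
\dot m_t=\tilde A(t) m_t+\tilde b(t),\qquad \dot\Sigma_t=\tilde A(t)\Sigma_t+\Sigma_t\tilde A(t)^\top .
\]
This is a finite-dimensional ODE in $(m_t,\Sigma_t)$, closed because $\tilde A,\tilde b$ are functions of $(m_t,\Sigma_t)$. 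Given a solution, let $\Phi_t$ be the flow of the time-dependent affine field with these coefficients. Each $\Phi_t$ is an affine map, $x\mapsto M_t x+c_t$ with $M_t$ invertible as the resolvent of a linear ODE, so $(\Phi_t)_\#\mu_0$ is the Gaussian with exactly the mean $m_t$ and covariance $\Sigma_t$ above. Hence it equals the ansatz $\mu_t$. Because it is a pushforward by a flow whose velocity at time $t$ is $\MJ_\gamma^{\mu_t}(g_{\mu_t})$, it solves \eqref{eq:measure-spectral-flow} weakly. One can also check this directly: for any test function $\varphi$, $\frac{d}{dt}\int\varphi\circ\Phi_t\,d\mu_0=\int\nabla\varphi\cdot v_t\,d\mu_t$.

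\textbf{Step 3: well-posedness and uniqueness.} This is the main obstacle. To get a local solution of the $(m,\Sigma)$ ODE, I would use at least continuity of $(m,\Sigma)\mapsto(\tilde A,\tilde b)$, since the hypothesis only gives existence of the minimizer $Q_\mu^*$ pointwise in $\mu$. The first thing I would try is to show that the inverse-trace problem has a unique minimizer whenever $\mathcal S\succ0$, by strict convexity of $Q\mapsto\tr(Q^{-1}S)$ on $\Sppd$. Continuity of the argmin would then follow from Berge's maximum theorem, and Peano gives local existence. If $\mathcal S$ is degenerate, I would fall back on a measurable selection and a Carathéodory solution. The statement itself should be read as: the solution built this way (or any solution, if uniqueness for \eqref{eq:measure-spectral-flow} holds) stays Gaussian. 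Uniqueness of the PDE is not available in general, as the paper notes, so I would state preservation for the Gaussian solution constructed in Step 2.
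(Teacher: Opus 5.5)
Your proposal is correct and follows the paper's proof. The structure theorem (Theorem~\ref{thm:selector-structure-paper}) makes the selected velocity $-(Q_\mu^*)^{-1}(A_\mu x+b_\mu)$ affine, and an affine drift closes the dynamics into the moment ODE $\dot m_t=\tilde A(t)m_t+\tilde b(t)$, $\dot\Sigma_t=\tilde A(t)\Sigma_t+\Sigma_t\tilde A(t)^\top$. Your Step~2 (solve the closed ODE first, then check that the induced affine flow pushes $\mu_0$ onto exactly that Gaussian) and your Step~3 caveats make rigorous what the paper asserts in one line. This includes reading ``preserves'' as existence of a Gaussian solution when PDE uniqueness is unavailable; for the Berge step, note that compactness comes from the blow-up of $\tr(Q^{-1}S)$ near the boundary of $\mathbb S_+^d$ when $S\succ0$.
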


\begin{proof}
Fix a Gaussian state $\mu$ and write
$$
g_\mu(x)=b_\mu+B_\mu x
$$
for the affine $\Wtwo$ gradient provided by the assumption. By Theorem~\ref{thm:selector-structure-paper}, any invertible inverse-trace minimizer
$$
Q_\mu^*\in \operatorname*{argmin}_{Q\in\cK\cap\Sppd}
\tr\!\bigl(Q^{-1}\mathcal{S}_\mu(g_\mu)\bigr)
$$
defines a valid spectral selector through
$$
\MJ_\gamma^\mu(g_\mu)(x)=-(Q_\mu^*)^{-1}(b_\mu+B_\mu x).
$$
This is again an affine vector field in $x$. Therefore, the spectral continuity equation is of the form
$$
v_t(x)=a_t+A_t x
$$
with coefficients depending on the current Gaussian state. A continuity equation with affine drift preserves the Gaussian class, and the associated mean $m_t$ and covariance $\Sigma_t$ solve the closed ODE system
$$
\dot m_t=a_t+A_t m_t,
\qquad
\dot \Sigma_t=A_t\Sigma_t+\Sigma_t A_t^\top.
$$
Therefore, Gaussian initial data remain Gaussian along the spectral flow.
\end{proof}

Two important examples fit into this mechanism. First, for the relative entropy $\operatorname{Ent}_\nu(\mu)$ with Gaussian target $\nu$, the classical $\Wtwo$ gradient flow is the Fokker--Planck equation, and on the Gaussian class its Wasserstein gradient is affine in $x$. Second, whenever $\MF(\mu)$ depends only on the mean and covariance of $\mu$, its first variation is a quadratic polynomial and its $\Wtwo$ gradient is affine, so the same transfer principle applies. This is in particular the case for training a linear two-layer network, obtained by taking $\sigma=\Id$ in the MLP model above together with a quadratic loss $R(H)$: the resulting objective depends quadratically on the first two moments of $\mu$.

\subsection{Centered Gaussian Linear Two-Layer Model}
This subsection develops a concrete Gaussian-preserving example where the predictor is a linear map and the spectral flow closes on covariance matrices.
Consider particles
$$
x=(u,v)\in\R^d\times\R^d,
$$
and the linear two-layer feature map
$$
\phi(z,x)=(u^\top z)v,
\qquad z\in\R^d.
$$
For a probability measure $\mu$ on $(u,v)$, the predictor is
$$
H_\mu(z)=\int (u^\top z)v\,d\mu(u,v)=M_\mu z,
\qquad
M_\mu\coloneqq \int vu^\top\,d\mu(u,v).
$$
With a linear teacher $y^\star(z)=A_\star z$ and data law $\rho$, the population least-squares loss is
$$
\MF(\mu)=\frac12\int \abs{H_\mu(z)-A_\star z}^2\,d\rho(z).
$$
Writing
$$
C_\rho\coloneqq \int zz^\top\,d\rho(z),
\qquad
B_\rho\coloneqq \int y^\star(z)z^\top\,d\rho(z)=A_\star C_\rho,
$$
one obtains
$$
\MF(\mu)=\frac12\tr(M_\mu C_\rho M_\mu^\top)-\tr(B_\rho M_\mu^\top)+\mathrm{const}.
$$
For centered Gaussian states
$$
\mu=\mathcal N(0,\Sigma),
\qquad
\Sigma=
\begin{pmatrix}
\Sigma_U & \Sigma_{UV}\\
\Sigma_{VU} & \Sigma_V
\end{pmatrix},
$$
the predictor is simply $M_\mu=\Sigma_{VU}$. Thus the cross-covariance block carries the learned linear map, while the diagonal covariance blocks influence the normalized metric direction.

The next proposition gives the closed covariance equation for Schatten spectral flows.
\begin{proposition}[Covariance ODE for centered Gaussian states]
\label{app-prop:gaussian-linear-covariance}
Let $\gamma=\gamma_p$ be the Schatten-$p$ norm and let
$$
q=\frac{2p}{2p-1}
$$
be the dual exponent of $2p$. Assume that $\mu_t=\mathcal N(0,\Sigma_t)$ is a centered Gaussian solution of the corresponding spectral flow and that $\Sigma_t$ remains positive definite. Then
$$
\dot \Sigma_t=A_t\Sigma_t+\Sigma_tA_t^\top,
\qquad
A_t=B_t\Sigma_t^{-1/2},
$$
where
$$
B_t=-\norm{K_t}_{S_q}^{2-q}U_t\diag(\sigma_i(t)^{q-1})W_t^\top,
\qquad
K_t=L_t\Sigma_t^{1/2},
$$
$K_t=U_t\diag(\sigma_i(t))W_t^\top$ is a singular value decomposition, and
$$
L_t=
\begin{pmatrix}
0 & E_t^\top\\
E_t & 0
\end{pmatrix},
\qquad
E_t=\Sigma_{VU,t}C_\rho-B_\rho.
$$
Equivalently,
$$
\dot \Sigma_t=B_t\Sigma_t^{1/2}+\Sigma_t^{1/2}B_t^\top.
$$
\end{proposition}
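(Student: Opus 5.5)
The plan is to compute the Wasserstein gradient on the Gaussian class, find the Measure-LMO by a change of variables that turns the measure problem into the matrix LMO of Proposition~\ref{prop:schatten}, and then read off the covariance evolution as in Corollary~\ref{app-cor:w2-to-spectral-paper}.

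\textbf{Step 1: the gradient.} Writing $M_\mu=\int vu^\top d\mu$, the first variation of $\MF$ is
$$
\frac{\delta\MF}{\delta\mu}(u,v)=\tr\bigl(E\,uv^\top\bigr)=v^\top E u,
\qquad
E=M_\mu C_\rho-B_\rho .
$$
Hence $g_\mu(u,v)=(E^\top v,\,Eu)=L x$ with $L$ as in the statement, so $g_\mu$ is linear in $x$ and $L$ is symmetric. On $\mathcal N(0,\Sigma)$ one has $M_\mu=\Sigma_{VU}$, so $E=E_t$.

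\textbf{Step 2: reduction of the Measure-LMO.} I would show that the minimizer in Definition~\ref{def:duality} may be taken linear, $v(x)=Ax$. A general $v\in L^2(\mu)$ enters the objective only through two quantities, $\int g\cdot v\,d\mu=\tr(L\,\mathbb E[v x^\top])$ and $\int vv^\top d\mu$. Replacing $v$ by its $L^2(\mu)$-projection onto linear fields, $\tilde v(x)=\mathbb E[vx^\top]\Sigma^{-1}x$, leaves the first quantity unchanged and decreases $\int vv^\top$ in the Loewner order. Since $\gamma_p$ is monotone, this can only decrease the objective, so linear fields suffice.

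For $v=Ax$ the objective becomes
$$
\tr\bigl(L\Sigma A^\top\bigr)+\tfrac12\gamma\bigl(A\Sigma A^\top\bigr).
$$
Set $Y=A\Sigma^{1/2}$ and $K=L\Sigma^{1/2}$. Since $L$ is symmetric, the first term is $\tr(KY^\top)=\langle K,Y\rangle_F$. The second term is $\tfrac12\gamma(YY^\top)=\tfrac12\gamma(Y^\top Y)=\tfrac12\mN(Y)^2$, because $YY^\top$ and $Y^\top Y$ have the same nonzero spectrum and Schatten norms are spectral. The reduced problem is therefore exactly the matrix LMO of Definition~\ref{def:matrix-lmo}. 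By Proposition~\ref{prop:schatten},
$$
Y=\mJ_p(K)=-\norm{K}_{S_q}^{2-q}\,U\diag(\sigma_i^{q-1})W^\top=B_t ,
$$
which gives $A_t=B_t\Sigma_t^{-1/2}$.

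\textbf{Step 3: the covariance ODE.} With the affine (here linear) drift $v_t=A_tx$, the argument of Corollary~\ref{app-cor:w2-to-spectral-paper} gives $\dot\Sigma_t=A_t\Sigma_t+\Sigma_tA_t^\top$. Substituting $A_t\Sigma_t=B_t\Sigma_t^{1/2}$ and $\Sigma_tA_t^\top=\Sigma_t^{1/2}B_t^\top$ yields the equivalent form stated in the proposition.

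\textbf{Main obstacle.} The delicate step is the projection argument in Step 2, because it requires the Loewner comparison $\int\tilde v\tilde v^\top d\mu\preceq\int vv^\top d\mu$. This holds since the residual $r=v-\tilde v$ is $L^2(\mu)$-orthogonal to every linear field $x\mapsto Cx$, so the cross terms $\int \tilde v r^\top d\mu$ vanish. A second point to handle is nonuniqueness of the LMO at $p=\infty$, where the selector of Proposition~\ref{prop:schatten} must be read as a subgradient choice. Positive definiteness of $\Sigma_t$ is what makes the substitution $Y=A\Sigma^{1/2}$ invertible.
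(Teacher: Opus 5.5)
Your proof is correct and follows the paper's argument: compute $g_\mu=Lx$, restrict to linear velocities $v=Ax$, substitute $B=A\Sigma^{1/2}$ and $K=L\Sigma^{1/2}$ to land on the Schatten matrix LMO, and then read off $\dot\Sigma_t=A_t\Sigma_t+\Sigma_tA_t^\top$. The one addition is your $L^2(\mu)$-projection plus monotonicity argument showing that linear fields suffice; the paper does not spell this out and simply works with affine velocities, implicitly relying on Corollary~\ref{app-cor:w2-to-spectral-paper} and hence on an invertible inverse-trace minimizer, so your version is slightly more self-contained.
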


\begin{proof}
For centered Gaussian $\mu=\mathcal N(0,\Sigma)$, the loss depends on $\mu$ only through $M_\mu=\Sigma_{VU}$, so differentiating with respect to this block gives the matrix error $E=\Sigma_{VU}C_\rho-B_\rho$. Therefore
$$
g_\mu(u,v)=\nabla_x\frac{\delta\MF}{\delta\mu}(u,v)=
\begin{pmatrix}
E^\top v\\ Eu
\end{pmatrix}
=Lx.
$$
The Wasserstein gradient is affine, hence the Gaussian class is preserved by the mechanism of Corollary~\ref{app-cor:w2-to-spectral-paper}. For an affine velocity $v(x)=Ax$, the selected direction solves
$$
\min_A\left\{\tr(L\Sigma A^\top)+\frac12\norm{A\Sigma^{1/2}}_{S_{2p}}^2\right\}.
$$
Setting $B=A\Sigma^{1/2}$ and $K=L\Sigma^{1/2}$ reduces this to the finite-dimensional Schatten duality-map problem
$$
\min_B\left\{\tr(KB^\top)+\frac12\norm{B}_{S_{2p}}^2\right\},
$$
whose minimizer is the displayed formula for $B_t$. Finally, if $X_t$ follows $\dot X_t=A_tX_t$ and $X_t\sim\mathcal N(0,\Sigma_t)$, then $\dot\Sigma_t=A_t\Sigma_t+\Sigma_tA_t^\top$.
\end{proof}

\subsection{Modal Reduction Under Simultaneous Diagonalization}
This subsection shows that, under a compatible data basis and balanced initialization, the covariance ODE decomposes into interpretable feature modes.
Assume that after orthogonal changes of coordinates one has
$$
C_\rho=\Id,
\qquad
B_\rho=\diag(\beta_1,\dots,\beta_d),
$$
with only finitely many nonzero coefficients, and use the balanced white-noise initialization
$$
\Sigma_{U,0}=\alpha_0\Id,
\qquad
\Sigma_{V,0}=\alpha_0\Id,
\qquad
\Sigma_{UV,0}=0,
\qquad
\alpha_0>0.
$$
Then the covariance remains diagonal mode by mode. We write
$$
r_i(t)=(\Sigma_{VU,t})_{ii},
\qquad
s_i(t)=(\Sigma_{U,t})_{ii}=(\Sigma_{V,t})_{ii},
$$
so that $r_i$ is the predictor coefficient and $s_i$ measures the corresponding layer magnitude.

\begin{proposition}[Modal ODE]
\label{app-prop:modal-ode-paper}
Under the assumptions above, set
$$
a_i=s_i+r_i,
\qquad
b_i=s_i-r_i,
\qquad
\mathcal N_t^q=
\sum_{j=1}^d \abs{r_j-\beta_j}^q\left(a_j^{q/2}+b_j^{q/2}\right).
$$
Then each mode solves
$$
\dot r_i
=
-\mathcal N_t^{2-q}\,(r_i-\beta_i)\abs{r_i-\beta_i}^{q-2}
\left(a_i^{q/2}+b_i^{q/2}\right),
$$
$$
\dot s_i
=
-\mathcal N_t^{2-q}\,(r_i-\beta_i)\abs{r_i-\beta_i}^{q-2}
\left(a_i^{q/2}-b_i^{q/2}\right).
$$
Equivalently,
$$
\dot a_i=-2\mathcal N_t^{2-q}\,(r_i-\beta_i)\abs{r_i-\beta_i}^{q-2}a_i^{q/2},
$$
$$
\dot b_i=\phantom{-}2\mathcal N_t^{2-q}\,(r_i-\beta_i)\abs{r_i-\beta_i}^{q-2}b_i^{q/2}.
$$
\end{proposition}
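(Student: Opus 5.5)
The plan is to specialize the covariance ODE of Proposition~\ref{app-prop:gaussian-linear-covariance} to a block-diagonal ansatz, diagonalize each $2\times2$ modal block explicitly, and read off the Schatten duality map entrywise. Reorder coordinates so that $x=(u,v)$ is grouped as pairs $(u_i,v_i)$. Under the assumptions $C_\rho=\Id$, $B_\rho=\diag(\beta_i)$ and the balanced white-noise initialization, consider the ansatz that $\Sigma_t$ is block diagonal with blocks
\[
\Sigma_i=\begin{pmatrix} s_i & r_i\\ r_i & s_i\end{pmatrix}.
\]
Under this ansatz $E_t=\Sigma_{VU,t}-B_\rho=\diag(r_i-\beta_i)$. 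Hence $L_t$ is block diagonal with blocks $e_i\left(\begin{smallmatrix}0&1\\1&0\end{smallmatrix}\right)$, where $e_i\coloneqq r_i-\beta_i$.

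The key observation is that every block is simultaneously diagonalized by the fixed orthonormal basis $f_\pm=(1,\pm1)/\sqrt2$. In this basis $\Sigma_i=\diag(a_i,b_i)$ with $a_i=s_i+r_i$ and $b_i=s_i-r_i$, and $L_i=\diag(e_i,-e_i)$. Consequently
\[
K=L\Sigma^{1/2}=\diag\bigl(e_i\sqrt{a_i},\,-e_i\sqrt{b_i}\bigr)_i
\]
is symmetric and diagonal in a global orthonormal basis. Its singular values are $|e_i|\sqrt{a_i}$ and $|e_i|\sqrt{b_i}$, and the polar factor $UW^\top$ is the diagonal sign matrix. The formula for $B_t$ in Proposition~\ref{app-prop:gaussian-linear-covariance} therefore acts entrywise. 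We get $\|K\|_{S_q}^q=\sum_j|e_j|^q\bigl(a_j^{q/2}+b_j^{q/2}\bigr)=\mathcal N_t^q$, and
\[
B=\diag\Bigl(-\mathcal N_t^{2-q}e_i|e_i|^{q-2}a_i^{(q-1)/2},\;+\mathcal N_t^{2-q}e_i|e_i|^{q-2}b_i^{(q-1)/2}\Bigr)_i .
\]
Since $B$ commutes with $\Sigma^{1/2}$, the equation $\dot\Sigma=B\Sigma^{1/2}+\Sigma^{1/2}B^\top=2B\Sigma^{1/2}$ gives exactly the stated equations for $\dot a_i$ and $\dot b_i$. Using $r_i=(a_i-b_i)/2$ and $s_i=(a_i+b_i)/2$ then yields the equations for $\dot r_i$ and $\dot s_i$.

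The main obstacle is justifying the ansatz, that is, showing that the structured class is invariant. The right-hand side computed above maps block-diagonal matrices with equal diagonal entries $s_i$ in the $(u_i,v_i)$ coordinates back into the same class, because it is diagonal in $f_\pm$. So the reduced ODE for $(a_i,b_i)$ defines a curve in the class that solves the full covariance ODE. By uniqueness for the covariance ODE, which is locally Lipschitz while $\Sigma_t\succ0$ and $\mathcal N_t>0$ for $1<q$, the solution starting from $\Sigma_0=\alpha_0\Id$ must coincide with it.

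Two degenerate situations need separate remarks. The endpoint $q=1$ ($p=\infty$) is non-smooth; there one takes the polar/sign convention, which is the subgradient selector of Proposition~\ref{prop:schatten}. Modes with $e_i=0$ are stationary, so they cause no difficulty.
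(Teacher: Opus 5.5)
Your computation is correct and follows the same route as the paper. You specialize Proposition~\ref{app-prop:gaussian-linear-covariance}, note that each $2\times2$ block of $\Sigma_t$ and of $L_t$ has eigenvalues $a_i,b_i$ and $\pm(r_i-\beta_i)$, read off the singular values $\abs{r_i-\beta_i}a_i^{1/2}$ and $\abs{r_i-\beta_i}b_i^{1/2}$ of $K_t$, and apply the Schatten selector blockwise. Your fixed eigenbasis $f_\pm$, together with the remark that $B$ commutes with $\Sigma^{1/2}$ so that $\dot\Sigma=2B\Sigma^{1/2}$, just makes explicit the paper's ``collecting the common normalization'' step. The signs and exponents you obtain for $\dot a_i,\dot b_i,\dot r_i,\dot s_i$ check out.

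The one step that fails as written is the uniqueness argument for invariance.
\begin{itemize}
\item For $1<q<2$ (i.e.\ $1<p<\infty$), the selector $K\mapsto-\norm{K}_{S_q}^{2-q}U\diag(\sigma_i^{q-1})W^\top$ is only $(q-1)$-H\"older at matrices with a zero singular value.
\item $K_t=L_t\Sigma_t^{1/2}$ is singular whenever some $r_i=\beta_i$. This happens identically for every inactive mode ($\beta_i=0$, $r_i(0)=0$, hence $r_i\equiv0$), which is exactly the rank-one and rank-two setting the paper targets.
\item Concretely, moving $r_i$ from $0$ to $\varepsilon$ changes $\dot r_i$ by order $\abs{\varepsilon}^{q-1}$. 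So the covariance ODE is not locally Lipschitz along the trajectory, even though $\Sigma_t\succ0$ and $\mathcal N_t>0$. This fails even within the balanced class, and only $q=2$ is genuinely smooth.
\end{itemize}
Cauchy--Lipschitz therefore gives you nothing here.

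The paper does not prove uniqueness either. It treats the persistence of the balanced diagonal structure as part of the setup, justified only by the fact that $L_t$ preserves each coordinate plane. What your argument actually establishes is twofold: the reduced modal system yields a Gaussian solution of the covariance ODE from $\Sigma_0=\alpha_0\Id$, and any solution in the balanced class obeys the stated equations. To claim it is \emph{the} solution, you would need one of two things instead of local Lipschitz continuity. Either restrict to states with all $r_i\neq\beta_i$, or use a monotonicity-type (one-sided) argument exploiting that the non-Lipschitz terms $-c\operatorname{sign}(e_i)\abs{e_i}^{q-1}$ are dissipative.
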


\begin{proof}
When $\Sigma_{VU}$ is diagonal, the error $E_t=\Sigma_{VU,t}-B_\rho$ is diagonal, hence the affine gradient matrix
$$
L_t=
\begin{pmatrix}
0 & E_t\\ E_t & 0
\end{pmatrix}
$$
preserves each coordinate plane. The balanced diagonal covariance structure is therefore invariant. In the $i$-th plane, the covariance block is
$$
\Sigma_t^{(i)}=\begin{pmatrix}s_i&r_i\\ r_i&s_i\end{pmatrix},
$$
with eigenvalues $a_i=s_i+r_i$ and $b_i=s_i-r_i$, while the corresponding block of $L_t$ has eigenvalues $\pm(r_i-\beta_i)$. The singular values of $K_t=L_t\Sigma_t^{1/2}$ are therefore $\abs{r_i-\beta_i}a_i^{1/2}$ and $\abs{r_i-\beta_i}b_i^{1/2}$, with no additional factor of two. Applying the Schatten LMO of Proposition~\ref{app-prop:gaussian-linear-covariance} blockwise and collecting the common normalization gives the stated equations.
\end{proof}

\subsection{Rank-One and Rank-Two Targets}
This subsection records the two simplest modal systems, which are useful for interpreting how the spectral normalization changes learning profiles.
If only one target coefficient is nonzero,
$$
B_\rho=\diag(\beta,0,\dots,0),
\qquad \beta\neq 0,
$$
then all inactive modes remain frozen and the dynamics reduces to the single pair $(r,s)$ governed by Proposition~\ref{app-prop:modal-ode-paper}. The variables $a=s+r$ and $b=s-r$ remain positive and encode the two layer variances in the active symmetric and antisymmetric directions. This one-mode system is the basic phase portrait behind the Gaussian experiments: the predictor coefficient $r_t$ moves toward $\beta$, while the terminal value of $s_t$ records the implicit covariance bias selected by $p$.
The admissible state set is the cone
$$
\{(r,s)\in\R^2:\ s>|r|\},
$$
equivalently $a>0$ and $b>0$. In panel (a) of Figure~\ref{fig:gaussian-leaves} we display trajectories in this cone from two initial conditions. The next proposition gives the conserved leaves for the two endpoint geometries; these level sets organize the trajectories observed in panel (a).

\begin{proposition}[Rank-one endpoint invariants]
\label{app-prop:rank-one-invariants}
In the rank-one system, the trace geometry $p=1$ preserves $s^2-r^2$, while the operator geometry $p=\infty$ preserves $\sqrt{s+r}+\sqrt{s-r}$.
\end{proposition}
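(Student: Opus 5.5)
The plan is to work in the eigen-coordinates $a=s+r$ and $b=s-r$ introduced in Proposition~\ref{app-prop:modal-ode-paper}, where the rank-one dynamics decouple into two scalar equations sharing a common time-dependent factor. In the rank-one system only the active mode moves, so I would write
\[
\dot a=-2c_t\,a^{q/2},\qquad \dot b=2c_t\,b^{q/2},\qquad c_t\coloneqq \mathcal N_t^{2-q}(r_t-\beta)\abs{r_t-\beta}^{q-2}.
\]
Both invariants should then follow by finding a change of variables $\phi$ such that $\frac{d}{dt}\phi(a)=-c_t$ and $\frac{d}{dt}\phi(b)=+c_t$. The sum $\phi(a)+\phi(b)$, or equivalently a product after exponentiating, is then conserved no matter what the scalar $c_t$ is. This is exactly the remark after Proposition~\ref{prop:modal-main} that the coupling factor only reparametrizes time.

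\emph{Trace endpoint, $p=1$.} Here $q=2p/(2p-1)=2$, so the equations read $\dot a=-2c_t a$ and $\dot b=2c_t b$, with $c_t=\mathcal N_t^{0}(r_t-\beta)$. Differentiating the product gives
\[
\frac{d}{dt}(ab)=\dot a\,b+a\,\dot b=-2c_t ab+2c_t ab=0 .
\]
Since $ab=(s+r)(s-r)=s^2-r^2$, this is the first claimed invariant. Equivalently, $\phi=\tfrac12\log$ works, because $\frac{d}{dt}\tfrac12\log a=-c_t$.

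\emph{Operator endpoint, $p=\infty$.} Here $q=1$, so $\dot a=-2c_t\sqrt a$ and $\dot b=2c_t\sqrt b$, with $c_t=\mathcal N_t\,\operatorname{sign}(r_t-\beta)$. Taking $\phi=\sqrt{\cdot}$ gives
\[
\frac{d}{dt}\sqrt a=\frac{\dot a}{2\sqrt a}=-c_t,\qquad \frac{d}{dt}\sqrt b=c_t .
\]
Hence $\sqrt a+\sqrt b=\sqrt{s+r}+\sqrt{s-r}$ is conserved.

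Two supporting facts are needed. The first is that $a,b>0$ along the flow, so that $\log$ and $\sqrt{\cdot}$ are differentiable there. For $p=1$ this follows from the linear equations, which give $a_t=a_0e^{-2\int_0^t c}>0$ and similarly for $b$. For $p=\infty$ it holds as long as $\sqrt a$ and $\sqrt b$ stay positive, i.e.\ on the maximal interval where the state remains in the cone $s>\abs{r}$. I would state the invariance on that interval, which is the admissible state set described above.

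The main obstacle is the endpoint $p=\infty$ itself. At $q=1$ the Schatten LMO of Proposition~\ref{prop:schatten} is only a subgradient selection, and $(r-\beta)\abs{r-\beta}^{q-2}$ becomes $\operatorname{sign}(r-\beta)$, which is undefined at $r=\beta$. I would handle this in two steps. First, justify the $q=1$ modal equations either by passing to the limit $q\downarrow1$ in Proposition~\ref{app-prop:modal-ode-paper}, or directly from \eqref{eq:muon-selector}: there $B=-\norm{K}_{S_1}UW^\top$, and the singular values of $K$ are $\abs{r-\beta}\sqrt a$ and $\abs{r-\beta}\sqrt b$. Second, observe that whenever $r_t=\beta$ the velocity is zero (equivalently $K=0$), so the trajectory is stationary there and the invariant holds trivially. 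With that settled, the computation above goes through on each time interval where $r_t\neq\beta$, and therefore for all $t$.
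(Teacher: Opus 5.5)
Your proposal is correct and is essentially the paper's proof. Pass to $a=s+r$ and $b=s-r$, where the rank-one modal equations read $\dot a=-2\Lambda a^{q/2}$ and $\dot b=2\Lambda b^{q/2}$ with a common scalar $\Lambda$; then check $\frac{d}{dt}(ab)=0$ for $q=2$ and $\frac{d}{dt}(\sqrt a+\sqrt b)=0$ for $q=1$. Your extra care about positivity of $a,b$ and the point $r=\beta$ goes beyond what the paper writes and is sound. The $r=\beta$ issue is in fact automatic: at $q=1$ the rank-one normalization is $\mathcal N_t=\abs{r_t-\beta}(\sqrt{a_t}+\sqrt{b_t})$, so the common factor equals $(r_t-\beta)(\sqrt{a_t}+\sqrt{b_t})$, which is continuous and vanishes at $r_t=\beta$.
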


\begin{proof}
Write $a=s+r$ and $b=s-r$. The modal equations of Proposition~\ref{app-prop:modal-ode-paper} give $\dot a=-2\Lambda a^{q/2}$ and $\dot b=2\Lambda b^{q/2}$ for a scalar factor $\Lambda$ depending on the current state. For $p=1$, one has $q=2$, hence $\frac{d}{dt}(ab)=\dot a b+a\dot b=0$, and $ab=s^2-r^2$. For $p=\infty$, one has $q=1$, hence $\frac{d}{dt}(\sqrt a+\sqrt b)=\dot a/(2\sqrt a)+\dot b/(2\sqrt b)=0$, which gives the stated invariant.
\end{proof}

Figure~\ref{fig:gaussian-leaves} in the main text illustrates these rank-one and rank-two reductions.

For a rank-two target
$$
B_\rho=\diag(\beta_1,\beta_2,0,\dots,0),
$$
the first two modes solve the same equations but are coupled through
$$
\mathcal N_t^q=
\sum_{j=1}^2 \abs{r_j-\beta_j}^q\left(a_j^{q/2}+b_j^{q/2}\right).
$$
Thus the trace geometry evolves the two learned directions independently, whereas every $p>1$ geometry shares a global normalization across them. This is the simplest Gaussian setting where the matrix nature of the spectral flow is visible at the level of the predictor rather than only through a time rescaling of a single scalar mode. In Figure~\ref{fig:gaussian-leaves}(b), this coupling appears as distinct planar paths toward different targets $(\beta_1,\beta_2)$ under the same initialization.

%%%%
\section{Additional Coupling and Interpolation Examples}
\label{app:extra-static-dynamic}
This section complements Figures~\ref{fig:static-pair} and~\ref{fig:static-interp-dynamic} with the same static and displacement-interpolation experiments on additional pairs of planar shapes. Each example uses an image-defined source shape and a translated image-defined target shape.

Figure~\ref{fig:extra-static-pairs} shows additional static couplings for three image-defined source and target pairs, complementing the annulus-to-disks example in the main text.

\begin{figure}[!t]
\centering
\begin{tabular}{@{}cc@{}}
\small Bunny to clover, $p=1$
&
\small Bunny to clover, $p=\infty$
\\[.5em]
\includegraphics[width=.47\linewidth]{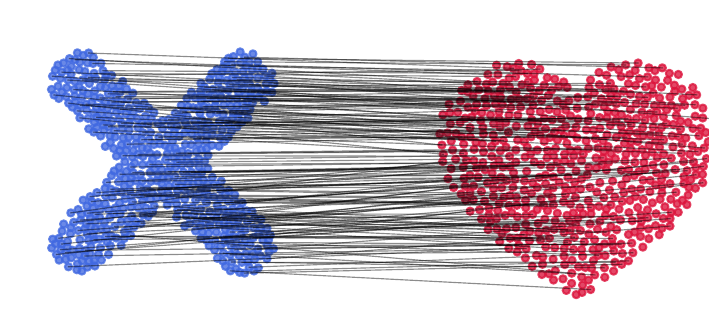}
&
\includegraphics[width=.47\linewidth]{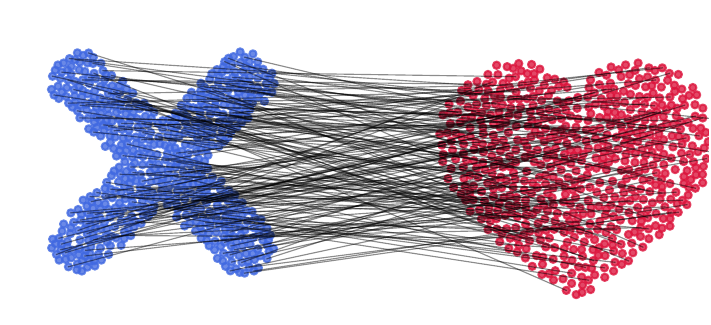}
\\[-.2em]
\small Cross to heart, $p=1$
&
\small Cross to heart, $p=\infty$
\\[.5em]
\includegraphics[width=.47\linewidth]{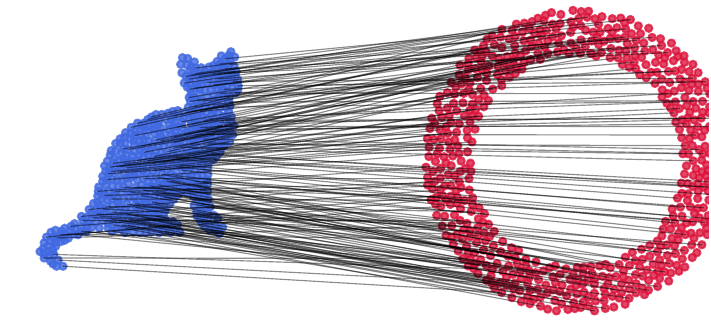}
&
\includegraphics[width=.47\linewidth]{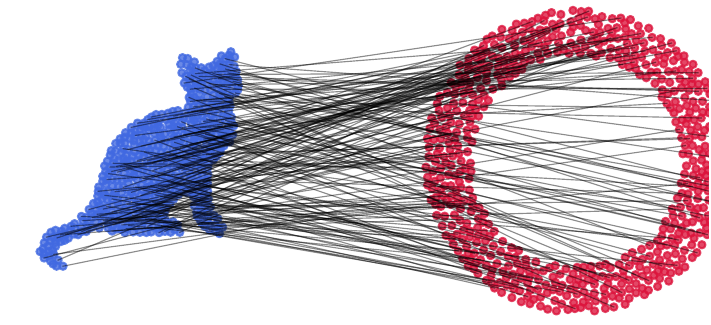}
\\[-.2em]
\small Cat to annulus, $p=1$
&
\small Cat to annulus, $p=\infty$
\end{tabular}
\caption{Additional static Spectral Wasserstein couplings. Only a subsample of matching segments is displayed.}
\label{fig:extra-static-pairs}
\end{figure}

Figure~\ref{fig:extra-static-interp} displays the corresponding displacement-interpolation density snapshots, using the same optimal couplings as in Figure~\ref{fig:extra-static-pairs}.
\begin{figure}[!t]
\centering
\resizebox{\linewidth}{!}{%
\begin{tabular}{@{}cccccccccc@{}}
\includegraphics[width=.098\linewidth]{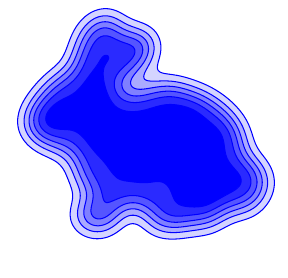}
&
\includegraphics[width=.098\linewidth]{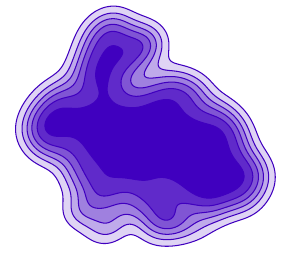}
&
\includegraphics[width=.098\linewidth]{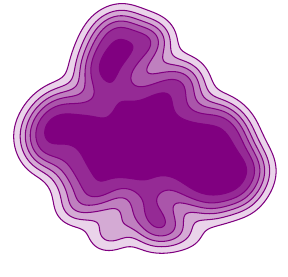}
&
\includegraphics[width=.098\linewidth]{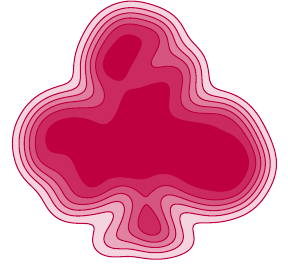}
&
\includegraphics[width=.098\linewidth]{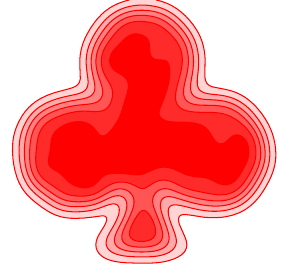}
&
\includegraphics[width=.098\linewidth]{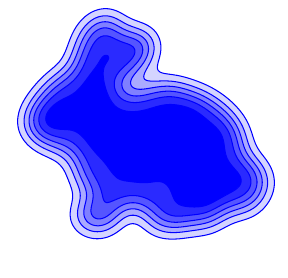}
&
\includegraphics[width=.098\linewidth]{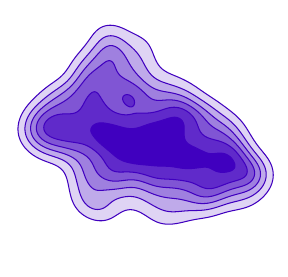}
&
\includegraphics[width=.098\linewidth]{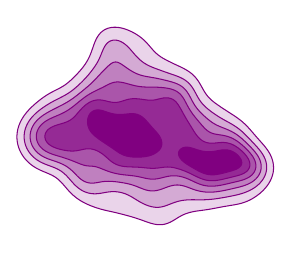}
&
\includegraphics[width=.098\linewidth]{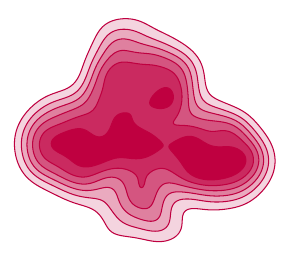}
&
\includegraphics[width=.098\linewidth]{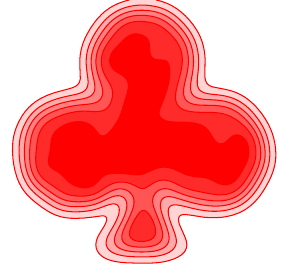}
\\[-.2em]
& & \small $p=1$ & & & & & \small $p=\infty$ & &
\\[.35em]
\includegraphics[width=.098\linewidth]{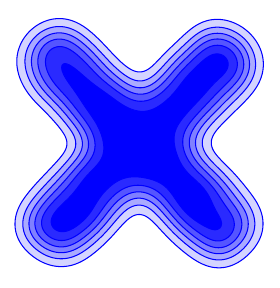}
&
\includegraphics[width=.098\linewidth]{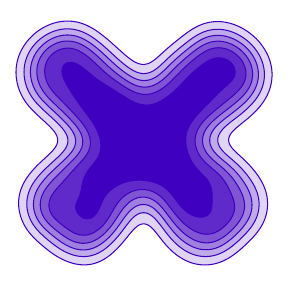}
&
\includegraphics[width=.098\linewidth]{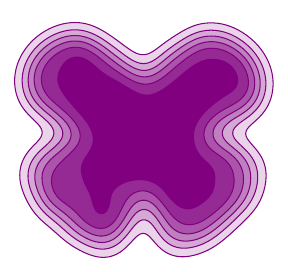}
&
\includegraphics[width=.098\linewidth]{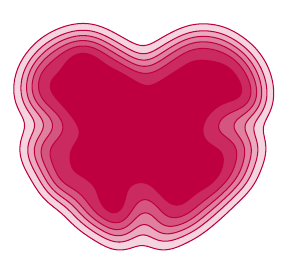}
&
\includegraphics[width=.098\linewidth]{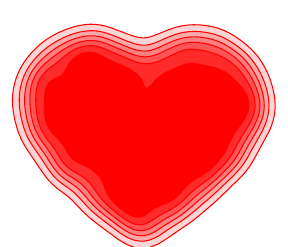}
&
\includegraphics[width=.098\linewidth]{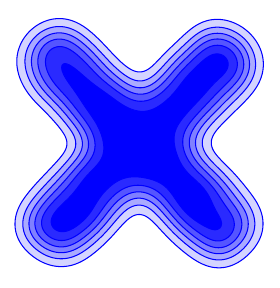}
&
\includegraphics[width=.098\linewidth]{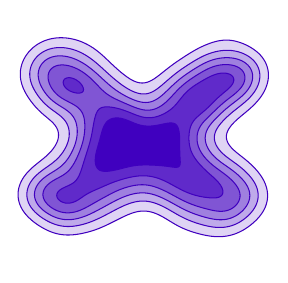}
&
\includegraphics[width=.098\linewidth]{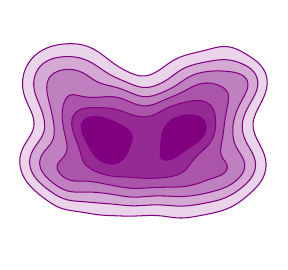}
&
\includegraphics[width=.098\linewidth]{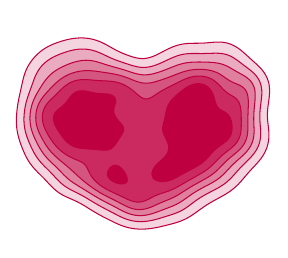}
&
\includegraphics[width=.098\linewidth]{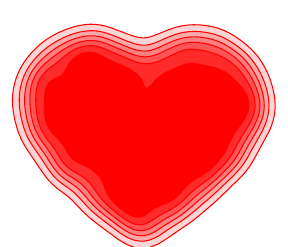}
\\[-.2em]
& & \small $p=1$ & & & & & \small $p=\infty$ & &
\\[.35em]
\includegraphics[width=.098\linewidth]{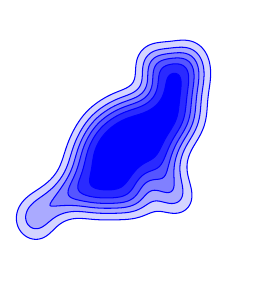}
&
\includegraphics[width=.098\linewidth]{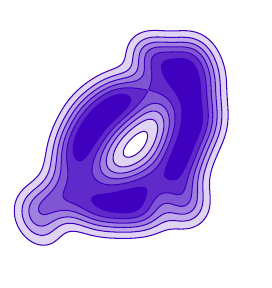}
&
\includegraphics[width=.098\linewidth]{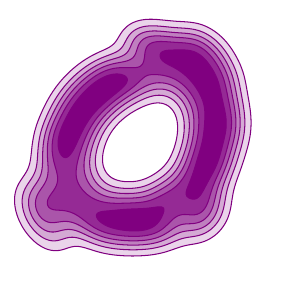}
&
\includegraphics[width=.098\linewidth]{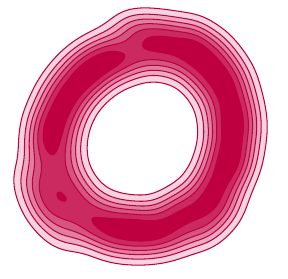}
&
\includegraphics[width=.098\linewidth]{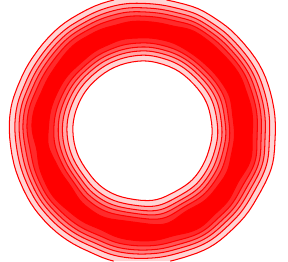}
&
\includegraphics[width=.098\linewidth]{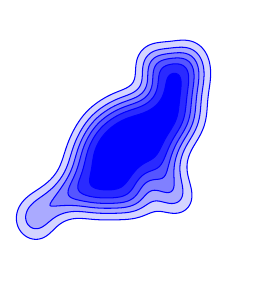}
&
\includegraphics[width=.098\linewidth]{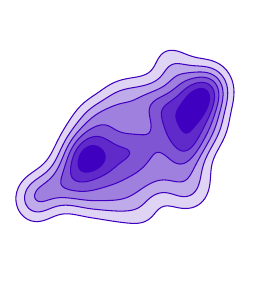}
&
\includegraphics[width=.098\linewidth]{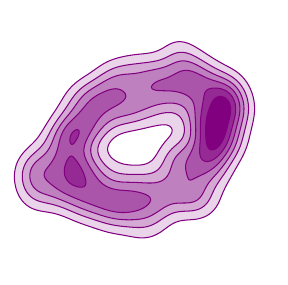}
&
\includegraphics[width=.098\linewidth]{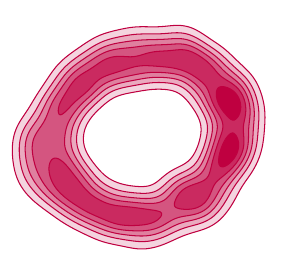}
&
\includegraphics[width=.098\linewidth]{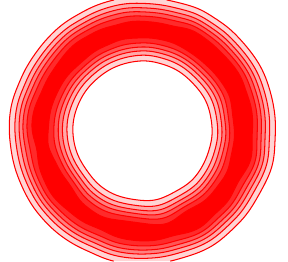}
\\[-.2em]
& & \small $p=1$ & & & & & \small $p=\infty$ & &
\end{tabular}
}
\caption{Additional displacement-interpolation density snapshots for the same shape pairs as Figure~\ref{fig:extra-static-pairs}.}
\label{fig:extra-static-interp}
\end{figure}

\FloatBarrier

\section{Geodesic Convexity}
\label{app:convexity}
This section studies one of the basic structural notions behind metric gradient flows, namely geodesic convexity, for the Spectral Wasserstein geometries introduced above. Geodesic convexity is desirable because it supports stability and global convergence statements \citep{AmbrosioGigliSavare2008}, but for many machine-learning objectives it is arguably too strong, so here it is used as a structural benchmark rather than as a blanket modeling assumption.
\subsection{Definition and General Setup}
This subsection recalls the relevant notion of convexity along Spectral Wasserstein geodesics and explains why it matters for the variational analysis of the flow.
When one studies gradient flows in metric spaces, geodesic convexity plays the same role as ordinary convexity in Euclidean optimization: it governs uniqueness, stability, and the variational structure of the dynamics. Since Corollary~\ref{cor:geodesic} shows that $\Wg$-geodesics are displacement interpolations associated with optimal couplings, the corresponding convexity inequalities can be tested directly along Euclidean segments.

The next definition fixes the notion used in the remainder of this section.
\begin{definition}[Geodesic convexity]
\label{def:geoconvex}
A functional $F:\cP_2(\R^d)\to(-\infty,+\infty]$ is called $\kappa$-geodesically convex for $\Wg$ if along every constant-speed $\Wg$-geodesic $(\mu_t)_{t\in[0,1]}$ one has
$$
F(\mu_t)
\le
(1-t)F(\mu_0)+tF(\mu_1)
-\frac{\kappa}{2}t(1-t)\Wg(\mu_0,\mu_1)^2.
$$
\end{definition}

\subsection{Linear Functionals}
This subsection shows that for linear functionals the qualitative and quantitative convexity criteria reduce to explicit pointwise conditions on the integrand.
For a Borel function $h:\R^d\to\R$, define the linear functional
$$
F_h(\mu)\coloneqq \int_{\R^d} h(x)\,d\mu(x).
$$
Because $\Wg$-geodesics are still given by affine interpolation in space, the convexity question for $F_h$ is especially transparent.

The next theorem gives the exact criterion.
\begin{theorem}[Linear functionals]
\label{thm:linearconvex}
Assume $\gamma$ is monotone and let $\cK$ be a convex compact PSD representing set for $\gamma$ as in Proposition~\ref{prop:psdpolar}.
\begin{enumerate}
\item The functional $F_h$ is geodesically convex for $\Wg$ if and only if $h$ is convex on $\R^d$.
\item If in addition $h\in C^2(\R^d)$ and $\kappa\ge0$, then $F_h$ is $\kappa$-geodesically convex for $\Wg$ if and only if
$$
\nabla^2 h(z)\succeq \kappa Q
\qquad
\text{for every }z\in\R^d,\ Q\in\cK.
$$
Equivalently,
$$
\xi^\top \nabla^2 h(z)\,\xi
\ge
\kappa\max_{Q\in\cK}\xi^\top Q\,\xi
\qquad
\text{for every }z,\xi\in\R^d.
$$
For $\kappa<0$, this displayed scalar reformulation with a maximum is not equivalent to the matrix inequality; multiplication by the negative scalar reverses the extremum, so the corresponding pointwise scalar condition would involve $\min_{Q\in\cK}\xi^\top Q\xi$. No equivalence for negative $\kappa$ is asserted here.
\end{enumerate}
\end{theorem}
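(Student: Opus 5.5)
The plan is to reduce everything to Euclidean segments using Corollary~\ref{cor:geodesic}: every constant-speed $\Wg$-geodesic of the form $\mu_t=((1-t)x+ty)_\#\pi_*$ with $\pi_*$ optimal gives
\[
F_h(\mu_t)=\int h((1-t)x+ty)\,d\pi_*(x,y).
\]
A caveat comes first. Definition~\ref{def:geoconvex} quantifies over \emph{every} constant-speed geodesic, and for $p=\infty$ geodesics need not be displacement interpolations. I will therefore first try to show that any constant-speed $\Wg$-geodesic is of this form. I would do this via the superposition argument in the proof of Theorem~\ref{thm:staticdynamic}: along a geodesic the action equals the static cost, so equality holds in the Jensen step. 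If this characterization turns out to be unavailable, I will instead prove the sufficiency directions along displacement interpolations only. That is the setting where the paper tests convexity; see the remark after Corollary~\ref{cor:geodesic}.

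\textbf{Part 1.} \emph{Sufficiency.} If $h$ is convex, then $h((1-t)x+ty)\le(1-t)h(x)+th(y)$ pointwise, and integrating against $\pi_*$ gives convexity of $F_h$. \emph{Necessity.} Take Dirac masses $\mu_0=\delta_x$ and $\mu_1=\delta_y$. The unique coupling is optimal, so $\mu_t=\delta_{(1-t)x+ty}$ is a $\Wg$-geodesic by Corollary~\ref{cor:geodesic}, and convexity of $F_h$ along it is exactly convexity of $h$ on the segment $[x,y]$.

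\textbf{Part 2.} \emph{Sufficiency.} Suppose $\nabla^2h\succeq\kappa Q$ for all $Q\in\cK$. For each pair $(x,y)$, set $\phi(t)=h((1-t)x+ty)$, so that $\phi''(t)=(y-x)^\top\nabla^2h\,(y-x)\ge\kappa (y-x)^\top Q(y-x)$. The standard one-dimensional estimate then gives
\[
\phi(t)\le(1-t)\phi(0)+t\phi(1)-\tfrac{\kappa}{2}t(1-t)(y-x)^\top Q(y-x).
\]
Integrating against $\pi_*$ yields the bound with $-\frac\kappa2 t(1-t)\tr(Q\Sigma_{\pi_*})$. The remaining task is to choose $Q$ well. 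By Theorem~\ref{thm:minmax} there is $Q^\star\in\partial\gamma(\Sigma_{\pi_*})\cap\cK$ with $\tr(Q^\star\Sigma_{\pi_*})=\gamma(\Sigma_{\pi_*})=\Wg(\mu_0,\mu_1)^2$. This is where $\kappa\ge0$ matters, since we need the full $\Wg^2$ and not a smaller quantity.

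\emph{Necessity.} Use Diracs again: $\mu_0=\delta_z$ and $\mu_1=\delta_{z+\varepsilon\xi}$. Then $\Wg^2=\varepsilon^2\gamma(\xi\xi^\top)=\varepsilon^2\max_{Q\in\cK}\xi^\top Q\xi$. A second-order Taylor expansion at $t=1/2$ as $\varepsilon\to0$ gives $\xi^\top\nabla^2h(z)\xi\ge\kappa\max_Q\xi^\top Q\xi$. Since $\kappa\ge0$, this scalar inequality is equivalent to $\nabla^2h(z)\succeq\kappa Q$ for every $Q\in\cK$, by simply testing each $Q$. That also proves the displayed equivalence, and the failure of the reformulation for $\kappa<0$ is immediate from the sign flip.

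I expect the main obstacle to be the issue raised at the start: ensuring that all constant-speed geodesics, rather than only displacement interpolations from optimal couplings, are covered in the sufficiency directions. The necessity directions only use Dirac geodesics and are unaffected.
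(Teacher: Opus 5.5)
Along displacement interpolations of optimal couplings your argument is the paper's. It integrates pointwise convexity against $\pi_*$, uses Dirac endpoints for both necessity directions, and uses a second-order bound for Part~2. The differences are cosmetic. You apply the hypothesis to one $Q^\star\in\cK$ with $\tr(Q^\star\Sigma_{\pi_*})=\gamma(\Sigma_{\pi_*})$; attainment of the max in the representing formula suffices, so Theorem~\ref{thm:minmax} is not needed. The paper instead bounds with every $Q$ and maximizes after integrating. Your Taylor expansion at $t=1/2$ with $\mu_1=\delta_{z+\varepsilon\xi}$ is a cleaner version of the paper's ``differentiating at second order in $t$''. In your route the sign of $\kappa$ plays no role in sufficiency; $\kappa\ge0$ is needed only for the matrix/scalar equivalence.

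The step that fails is your first plan for the obstacle you flag. Equality between the action and $\Wg^2$ forces Jensen equality only for the active matrix: it makes $(Q^\star)^{1/2}\dot\omega_t$ constant in time along a.e.\ path $\omega$. When $Q^\star$ is singular, the velocity components in $\ker Q^\star$ are unconstrained. Concretely, let $\gamma=\lambda_{\max}$ on $\mathbb S_+^2$, fix $0<c\le 1/\pi$, and set
\[
\mu_t=\tfrac12\bigl(\delta_{(t,\,c\sin\pi t)}+\delta_{(t,\,-c\sin\pi t)}\bigr),\qquad t\in[0,1],
\]
which joins $\delta_0$ to $\delta_{e_1}$.

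\emph{Lower bound.} Every coupling $\pi$ of $\mu_s,\mu_t$ has $y_1-x_1=t-s$. Hence $\lambda_{\max}(\Sigma_\pi)\ge e_1^\top\Sigma_\pi e_1=(t-s)^2$.

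\emph{Upper bound.} Pairing atoms of equal sign gives displacement covariance $\diag\bigl((t-s)^2,\,c^2(\sin\pi t-\sin\pi s)^2\bigr)$. Since $c\pi\le1$, its top eigenvalue is $(t-s)^2$.

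Hence $\Wg(\mu_s,\mu_t)=|t-s|\,\Wg(\mu_0,\mu_1)$, so this is a constant-speed geodesic. Yet $\mu_{1/2}$ is not a Dirac, so it is not a displacement interpolation. Worse, for the convex function $h(x)=x_2^2$ one gets $F_h(\mu_0)=F_h(\mu_1)=0<c^2=F_h(\mu_{1/2})$. So with Definition~\ref{def:geoconvex} read literally, the ``if'' directions of both parts are false for the operator norm. No characterization argument can repair this.

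Your fallback is therefore the correct scope, not a concession, and you should state it explicitly. The theorem holds when convexity is tested along displacement interpolations $((1-t)x+ty)_\#\pi_*$ of $\Wg$-optimal couplings.

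The paper's proof makes exactly this restriction silently, writing ``let $\mu_t=((1-t)x+ty)_\#\pi_*$ be any constant-speed geodesic''. It relies on the claim that Corollary~\ref{cor:geodesic} shows geodesics are displacement interpolations. That corollary only shows that displacement interpolations are geodesics, not the converse.

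The same example also contradicts the paper's assertion that the Dirac geodesic is unique. Your necessity arguments use only that the straight Dirac path is one geodesic (unique coupling plus Corollary~\ref{cor:geodesic}), so they stand as written.
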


\begin{proof}
Assume first that $h$ is convex and let
$$
\mu_t=((1-t)x+ty)_\#\pi_*
$$
be any constant-speed $\Wg$-geodesic. Then
$$
F_h(\mu_t)
=
\int h((1-t)x+ty)\,d\pi_*(x,y).
$$
By the convexity of $h$,
$$
h((1-t)x+ty)\le (1-t)h(x)+th(y),
$$
and integrating yields geodesic convexity of $F_h$.

Conversely, if $F_h$ is geodesically convex, apply the definition to Dirac masses $\mu_0=\delta_x$ and $\mu_1=\delta_y$. The unique geodesic is
$$
\mu_t=\delta_{(1-t)x+ty},
$$
so
$$
h((1-t)x+ty)=F_h(\mu_t)\le (1-t)h(x)+th(y),
$$
which proves convexity of $h$.

Assume now that $h\in C^2$ and that
$$
\nabla^2 h(z)\succeq \kappa Q
\qquad
\text{for every }z\in\R^d,\ Q\in\cK.
$$
Let
$$
\mu_t=((1-t)x+ty)_\#\pi_*
$$
be any constant-speed $\Wg$-geodesic, write $z_t=(1-t)x+ty$ and $\Delta=y-x$. Differentiating under the integral sign gives
$$
\frac{d^2}{dt^2}F_h(\mu_t)
=
\int \Delta^\top \nabla^2 h(z_t)\Delta\,d\pi_*(x,y).
$$
For every $Q\in\cK$ this implies
$$
\frac{d^2}{dt^2}F_h(\mu_t)
\ge
\kappa\int \Delta^\top Q\Delta\,d\pi_*(x,y).
$$
Since $\kappa\ge0$, taking the maximum over $Q\in\cK$ yields
$$
\frac{d^2}{dt^2}F_h(\mu_t)
\ge
\kappa\gamma\!\left(\int \Delta\Delta^\top\,d\pi_*(x,y)\right)
=
\kappa\Wg(\mu_0,\mu_1)^2.
$$
Integrating twice in $t$ gives the $\kappa$-geodesic convexity estimate.

Conversely, assume $F_h$ is $\kappa$-geodesically convex. Testing the definition on the Dirac geodesic between $\delta_z$ and $\delta_{z+\xi}$ gives
$$
h(z+t\xi)
\le
(1-t)h(z)+th(z+\xi)
-\frac{\kappa}{2}t(1-t)\gamma(\xi\xi^\top).
$$
Differentiating at second order in $t$ yields
$$
\xi^\top \nabla^2 h(z)\,\xi
\ge
\kappa\gamma(\xi\xi^\top)
=
\kappa\max_{Q\in\cK}\xi^\top Q\,\xi.
$$
Because $\kappa\ge0$, this scalar inequality is equivalent to the matrix inequality
$$
\nabla^2 h(z)\succeq \kappa Q
\qquad
\text{for every }Q\in\cK.
$$
\end{proof}

The next remark interprets the condition of Theorem~\ref{thm:linearconvex} for the Schatten family.
\begin{remark}[Schatten norms]
For the Schatten geometries used throughout the paper and for $\kappa\ge0$, the condition
$$
\nabla^2 h(z)\succeq \kappa Q
\qquad
\text{for every }Q\in\cK
$$
is equivalent to
$$
\nabla^2 h(z)\succeq \kappa \Id.
$$
Indeed, for $p=1$ we may choose $\cK=\{\Id\}$. For $1<p\le \infty$, a convenient choice is
$$
\cK=\{Q\succeq 0:\ \norm{Q}_{S_q}\le 1\},
\qquad
q=\frac{p}{p-1},
$$
with the usual interpretation $q=1$ when $p=\infty$. Under the standing condition $\kappa\ge0$ in Theorem~\ref{thm:linearconvex}, every such $Q$ satisfies $Q\preceq\Id$, and every rank-one projector belongs to $\cK$; hence the condition is equivalent to $\nabla^2h(z)\succeq\kappa\Id$, exactly as in the classical $\Wtwo$ geometry. For negative $\kappa$, this simplification is not asserted.
\end{remark}

\subsection{Relative Entropy}

This subsection examines the geodesic convexity of relative entropy, which is a fundamental nonlinear example underlying diffusion-type gradient flows.
Section~\ref{sec:relative-entropy-gaussians} treats in detail the case of a Gaussian target $\nu$, for which Gaussians are stable, and boils down to the study of an ODE. 
Let
$$
d\nu(x)=Z^{-1}e^{-V(x)}\,dx,
\qquad
V\in C^2(\R^d),
$$
and define the relative entropy
$$
\operatorname{Ent}_\nu(\mu)\coloneqq 
\begin{cases}
\displaystyle \int \log\!\left(\frac{d\mu}{d\nu}\right)\,d\mu, & \mu\ll \nu,\\[0.6em]
+\infty, & \text{otherwise}.
\end{cases}
$$
The trace case is the classical displacement-convexity theory of entropy. For general spectral geometries the same argument works only when the active quadratic costs remain uniformly elliptic, and otherwise one only gets a weaker statement by approximation.

The next theorem records the full all-geodesics result in the uniformly elliptic regime.
\begin{theorem}[Full entropy convexity under uniform ellipticity]
\label{thm:entropyfull}
Assume $\gamma$ is monotone and that $\cK$ can be chosen so that
$$
\cK\subset \Sppd.
$$
Assume moreover that
$$
\nabla^2 V(x)\succeq \kappa Q
\qquad
\text{for every }x\in\R^d,\ Q\in\cK.
$$
Then $\operatorname{Ent}_\nu$ is $\kappa$-geodesically convex on $(\cP_2(\R^d),\Wg)$.
\end{theorem}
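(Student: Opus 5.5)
The plan is to reduce convexity along a $\Wg$-geodesic to McCann's displacement convexity for a $\Wtwo$-geodesic after a linear change of variables, using the active matrix from the max-min formula of Theorem~\ref{thm:minmax}.

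\textbf{Identifying the active matrix.} Let $(\mu_t)$ be a constant-speed $\Wg$-geodesic. By Corollary~\ref{cor:geodesic} and the characterization of geodesics in this setting, I take $\mu_t=((1-t)x+ty)_\#\pi_*$ with $\pi_*$ optimal for $\Wg(\mu_0,\mu_1)$. Theorem~\ref{thm:minmax} supplies a saddle point $(Q_*,\pi_*)$ with $Q_*\in\cK$, so $Q_*\succ0$ by hypothesis. The saddle property shows that $\pi_*$ is optimal for the quadratic cost $(y-x)^\top Q_*(y-x)$. It also gives
\[
\WQ{Q_*}(\mu_0,\mu_1)^2=\int (y-x)^\top Q_*(y-x)\,d\pi_*=\gamma(\Sigma_{\pi_*})=\Wg(\mu_0,\mu_1)^2 .
\]
The subtle point is that $Q_*$ must be a maximizer that pairs with the given optimal $\pi_*$. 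Since any optimal $\pi_*$ together with any maximizer $Q_*$ forms a saddle point (Sion), the same $Q_*$ works for every optimal coupling.

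\textbf{Change of variables.} Set $L=Q_*^{1/2}$, which is invertible, and $\tilde\mu_t=L_\#\mu_t$. Then $\tilde\mu_t=((1-t)\tilde x+t\tilde y)_\#\tilde\pi$ with $\tilde\pi=(L,L)_\#\pi_*$ optimal for the Euclidean quadratic cost, so $(\tilde\mu_t)$ is a $\Wtwo$-geodesic. Moreover
\[
\Wtwo(\tilde\mu_0,\tilde\mu_1)^2=\WQ{Q_*}(\mu_0,\mu_1)^2=\Wg(\mu_0,\mu_1)^2 .
\]
Relative entropy is invariant under the invertible linear map, since the Jacobian factors cancel: $\operatorname{Ent}_\nu(\mu)=\operatorname{Ent}_{\tilde\nu}(L_\#\mu)$ with $\tilde\nu=L_\#\nu\propto e^{-\tilde V}$ and $\tilde V=V\circ L^{-1}$. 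Its Hessian is $\nabla^2\tilde V=L^{-1}\nabla^2 V\,L^{-1}\succeq\kappa L^{-1}Q_*L^{-1}=\kappa\Id$, using the assumption with $Q=Q_*\in\cK$.

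\textbf{Conclusion and main obstacle.} McCann's theorem (in the $\kappa$-convex form of \citet{AmbrosioGigliSavare2008}) says $\operatorname{Ent}_{\tilde\nu}$ is $\kappa$-convex along every $\Wtwo$-geodesic. Applied to $(\tilde\mu_t)$, this gives
\[
\operatorname{Ent}_\nu(\mu_t)\le(1-t)\operatorname{Ent}_\nu(\mu_0)+t\operatorname{Ent}_\nu(\mu_1)-\tfrac\kappa2t(1-t)\Wtwo(\tilde\mu_0,\tilde\mu_1)^2 .
\]
Substituting $\Wtwo(\tilde\mu_0,\tilde\mu_1)=\Wg(\mu_0,\mu_1)$ yields the claim.

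I expect the main obstacle to be justifying that \emph{every} constant-speed $\Wg$-geodesic is a displacement interpolation of some optimal coupling. Corollary~\ref{cor:geodesic} only states the converse. If this cannot be cited, I would restrict to geodesics of that form, which is how the paper uses geodesics in Section~\ref{app:convexity}. Alternatively, I would argue via the superposition principle and the equality case of Jensen in the proof of Theorem~\ref{thm:staticdynamic} with $Q_*\succ0$: equality forces straight-line paths at constant speed. A secondary point is the case $\kappa<0$, where the sign of the curvature term is handled automatically because the identity $\Wtwo(\tilde\mu_0,\tilde\mu_1)=\Wg(\mu_0,\mu_1)$ is exact.
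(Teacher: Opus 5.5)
Your proof is correct and follows the paper's argument: take an active $Q_*\in\cK$, push forward by $L=Q_*^{1/2}$ to get a $\Wtwo$-geodesic with $\Wtwo(\tilde\mu_0,\tilde\mu_1)=\Wg(\mu_0,\mu_1)$ and $\nabla^2(V\circ L^{-1})\succeq\kappa\Id$, apply McCann's theorem, and pull back using invariance of relative entropy, with geodesics taken to be displacement interpolations $((1-t)x+ty)_\#\pi_*$, which the paper also assumes tacitly. Your requirement that $Q_*$ maximize the dual problem $\max_{Q\in\cK}\WQ{Q}(\mu_0,\mu_1)^2$ is more careful than the paper's condition $\tr(Q_*S_*)=\gamma(S_*)$, which alone does not make $\pi_*$ optimal for $c_{Q_*}$; and for your fallback characterization of all geodesics, glue $\Wg$-optimal couplings along refining partitions of $[0,1]$ rather than invoking superposition, since the Jensen equality case with $Q_*\succ0$ then forces straight lines without needing a velocity field whose $\gamma$-action equals $\Wg(\mu_0,\mu_1)^2$.
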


\begin{proof}
Let
$$
\mu_t=((1-t)x+ty)_\#\pi_*
$$
be any constant-speed $\Wg$-geodesic, and set
$$
S_*=\int (y-x)(y-x)^\top\,d\pi_*(x,y).
$$
Choose $Q_*\in\cK$ such that
$$
\tr(Q_*S_*)=\gamma(S_*)=\Wg(\mu_0,\mu_1)^2.
$$
By Theorem~\ref{thm:minmax}, the same coupling $\pi_*$ is optimal for the quadratic cost
$$
c_{Q_*}(x,y)=(y-x)^\top Q_*(y-x),
$$
and
$$
\WQ{Q_*}(\mu_0,\mu_1)^2=\Wg(\mu_0,\mu_1)^2.
$$

Since $Q_*$ is positive definite, let $L=Q_*^{1/2}$ and define
$$
\widetilde\mu_i=L_\#\mu_i,
\qquad
\widetilde\nu=L_\#\nu,
\qquad
\widetilde\pi_*=(L,L)_\#\pi_*.
$$
Then $\widetilde\pi_*$ is optimal for the Euclidean quadratic cost between $\widetilde\mu_0$ and $\widetilde\mu_1$, hence
$$
\widetilde\mu_t=((1-t)z_0+t z_1)_\#\widetilde\pi_*
$$
is a $\Wtwo$-geodesic and
$$
\Wtwo(\widetilde\mu_0,\widetilde\mu_1)^2=\Wg(\mu_0,\mu_1)^2.
$$

The reference measure $\widetilde\nu$ has density
$$
d\widetilde\nu(z)=\widetilde Z^{-1}e^{-\widetilde V(z)}\,dz,
\qquad
\widetilde V(z)=V(L^{-1}z)+\text{constant},
$$
so
$$
\nabla^2\widetilde V(z)=L^{-1}\nabla^2V(L^{-1}z)L^{-1}.
$$
Because $Q_*\in\cK$, the curvature assumption gives
$$
\nabla^2V(x)\succeq \kappa Q_*=\kappa L^2,
$$
hence
$$
\nabla^2\widetilde V(z)\succeq \kappa \Id.
$$
The classical entropy convexity theorem for $\Wtwo$ therefore yields
$$
\operatorname{Ent}_{\widetilde\nu}(\widetilde\mu_t)
\le
(1-t)\operatorname{Ent}_{\widetilde\nu}(\widetilde\mu_0)
+t\operatorname{Ent}_{\widetilde\nu}(\widetilde\mu_1)
-\frac{\kappa}{2}t(1-t)\Wtwo(\widetilde\mu_0,\widetilde\mu_1)^2.
$$
Finally, relative entropy is invariant under the invertible change of variables $L$, so
$$
\operatorname{Ent}_{\widetilde\nu}(\widetilde\mu_t)=\operatorname{Ent}_\nu(\mu_t),
$$
and similarly at the endpoints. This gives the result.
\end{proof}

The next theorem explains what remains true in the more relevant but singular regime where the condition $\cK\subset \Sppd$ fails. This lack of ellipticity is precisely what happens for Schatten-$p$ geometries with $p>1$, because the corresponding representing sets contain singular matrices, but one still retains a weaker geodesic-convexity statement by approximation.
\begin{theorem}[Weak entropy convexity]
\label{thm:entropyweak}
Assume $\gamma$ is monotone, that $\cK\subset \mathbb S_+^d$ is convex compact, and that
$$
\cK\cap \Sppd\neq \varnothing.
$$
Assume moreover that
$$
\nabla^2V(x)\succeq \kappa Q
\qquad
\text{for every }x\in\R^d,\ Q\in\cK.
$$
Then for every $\mu_0,\mu_1\in\cP_2(\R^d)$ there exists at least one constant-speed $\Wg$-geodesic $(\mu_t)_{t\in[0,1]}$ from $\mu_0$ to $\mu_1$ such that
$$
\operatorname{Ent}_\nu(\mu_t)
\le
(1-t)\operatorname{Ent}_\nu(\mu_0)+t\operatorname{Ent}_\nu(\mu_1)
-\frac{\kappa}{2}t(1-t)\Wg(\mu_0,\mu_1)^2.
$$
\end{theorem}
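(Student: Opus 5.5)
The plan is to regularize the representing set and reduce to Theorem~\ref{thm:entropyfull}, then pass to the limit. Fix $Q_0\in\cK\cap\Sppd$ and, for $\varepsilon\in(0,1]$, set $\mathcal K_\varepsilon\coloneqq(1-\varepsilon)\cK+\varepsilon Q_0$. This set is convex, compact, contained in $\Sppd$, and represents the gauge $\gamma_\varepsilon(S)\coloneqq\max_{Q\in\mathcal K_\varepsilon}\tr(QS)=(1-\varepsilon)\gamma(S)+\varepsilon\tr(Q_0S)$. The gauge $\gamma_\varepsilon$ is monotone because it is a positive combination of monotone gauges, and it satisfies $|\gamma_\varepsilon(S)-\gamma(S)|\le C\varepsilon\tr(S)$ on $\mathbb S_+^d$. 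The curvature hypothesis transfers directly to $\mathcal K_\varepsilon$ when $\kappa\ge0$. Indeed, $\nabla^2V\succeq\kappa Q$ for all $Q\in\cK$ gives $\nabla^2V\succeq\kappa Q_0$ since $Q_0\in\cK$, and the inequality is preserved under convex combinations. The case $\kappa<0$ is handled the same way, because the Hessian lower bound is affine in $Q$. Theorem~\ref{thm:entropyfull} therefore applies to $\mathsf W_{\gamma_\varepsilon}$. Choosing an optimal coupling $\pi_\varepsilon$ for $\mathsf W_{\gamma_\varepsilon}(\mu_0,\mu_1)$, the geodesic $\mu_t^\varepsilon=((1-t)x+ty)_\#\pi_\varepsilon$ satisfies the $\kappa$-convexity inequality with $\mathsf W_{\gamma_\varepsilon}(\mu_0,\mu_1)^2$.

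Next, I will pass to the limit $\varepsilon\to0$. The couplings $\pi_\varepsilon$ lie in $\Pi(\mu_0,\mu_1)$, which is tight, so a subsequence converges narrowly to some $\pi\in\Pi(\mu_0,\mu_1)$. I then show $\pi$ is $\Wg$-optimal. For any competitor $\pi'$, lower semicontinuity of $\pi\mapsto\tr(Q\Sigma_\pi)$ for $Q\succeq0$ (Portmanteau, as in the proof of Theorem~\ref{thm:minmax}), taken as a supremum over $Q\in\cK$, gives
\[
\gamma(\Sigma_\pi)\le\liminf_\varepsilon\gamma(\Sigma_{\pi_\varepsilon})\le\liminf_\varepsilon\frac{\gamma_\varepsilon(\Sigma_{\pi_\varepsilon})}{1-\varepsilon}\le\liminf_\varepsilon\frac{\gamma_\varepsilon(\Sigma_{\pi'})}{1-\varepsilon}=\gamma(\Sigma_{\pi'}).
\]
This uses $\tr(Q_0 S)\ge0$. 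The same chain shows $\mathsf W_{\gamma_\varepsilon}(\mu_0,\mu_1)\to\Wg(\mu_0,\mu_1)$; the upper bound is obtained by testing with a fixed $\Wg$-optimal coupling. By Corollary~\ref{cor:geodesic}, $\mu_t=((1-t)x+ty)_\#\pi$ is then a constant-speed $\Wg$-geodesic, and $\mu_t^\varepsilon\to\mu_t$ narrowly, since pushforward by the continuous map $(x,y)\mapsto(1-t)x+ty$ preserves narrow convergence.

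The main obstacle is the left-hand side: I need $\operatorname{Ent}_\nu(\mu_t)\le\liminf_\varepsilon\operatorname{Ent}_\nu(\mu_t^\varepsilon)$. Relative entropy with respect to a probability measure $\nu$ is narrowly lower semicontinuous by the Donsker--Varadhan variational formula, so this should follow directly. The right-hand side is independent of $\varepsilon$ except through $\mathsf W_{\gamma_\varepsilon}^2$, which converges. I will treat the cases $\operatorname{Ent}_\nu(\mu_i)=+\infty$ separately, since the inequality is then trivial. One subtlety to check is that $\nu$ is a probability measure with $e^{-V}$ integrable, which is implicit in the normalization $Z$, so that narrow lower semicontinuity applies without second-moment issues. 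Combining the limits yields the stated inequality along the limiting geodesic.
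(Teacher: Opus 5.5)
Your proposal is correct and follows the paper's proof essentially step for step. You use the same regularization $\mathcal K_\varepsilon=(1-\varepsilon)\cK+\varepsilon Q_0$, the same reduction to Theorem~\ref{thm:entropyfull}, extraction of a narrowly convergent subsequence of $\mathsf W_{\gamma_\varepsilon}$-optimal couplings, and passage to the limit via lower semicontinuity of $\operatorname{Ent}_\nu$ and $\mathsf W_{\gamma_\varepsilon}(\mu_0,\mu_1)\to\Wg(\mu_0,\mu_1)$. You also fill in points the paper leaves implicit: the transfer of the curvature bound to $\mathcal K_\varepsilon$ by affinity in $Q$, and an explicit proof that the limit coupling is $\Wg$-optimal. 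The convergence of the costs is even quicker from the two-sided bound $(1-\varepsilon)\gamma\le\gamma_\varepsilon\le\gamma$ on $\mathbb S_+^d$, where the upper bound holds because $Q_0\in\cK$.
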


\begin{proof}
Choose $Q_0\in\cK\cap \Sppd$ and define
$$
\mathcal K_{\gamma,\varepsilon}\coloneqq (1-\varepsilon)\cK+\varepsilon Q_0,
\qquad
\varepsilon\in(0,1).
$$
Then every element of $\mathcal K_{\gamma,\varepsilon}$ is positive definite. Let $\gamma_\varepsilon$ be the support function of $\mathcal K_{\gamma,\varepsilon}$, namely
$$
\gamma_\varepsilon(S)=\max_{Q\in\mathcal K_{\gamma,\varepsilon}}\tr(QS)
=(1-\varepsilon)\gamma(S)+\varepsilon\tr(Q_0S).
$$
Hence
$$
(1-\varepsilon)\gamma(S)\le \gamma_\varepsilon(S)\le \gamma(S)
$$
for all $S\succeq 0$, and therefore
$$
(1-\varepsilon)\Wg(\mu_0,\mu_1)^2
\le
\mathsf W_{\gamma_\varepsilon}(\mu_0,\mu_1)^2
\le
\Wg(\mu_0,\mu_1)^2.
$$

By Theorem~\ref{thm:entropyfull}, each regularized geometry has full entropy convexity. Choose an optimal coupling $\pi_\varepsilon$ for $\mathsf W_{\gamma_\varepsilon}$ and set
$$
\mu_t^\varepsilon=((1-t)x+ty)_\#\pi_\varepsilon.
$$
Then
$$
\operatorname{Ent}_\nu(\mu_t^\varepsilon)
\le
(1-t)\operatorname{Ent}_\nu(\mu_0)+t\operatorname{Ent}_\nu(\mu_1)
-\frac{\kappa}{2}t(1-t)\mathsf W_{\gamma_\varepsilon}(\mu_0,\mu_1)^2.
$$
By compactness of couplings, one may extract a subsequence $\pi_{\varepsilon_n}\rightharpoonup \pi_*$. Setting
$$
\mu_t=((1-t)x+ty)_\#\pi_*,
$$
the lower semicontinuity of relative entropy and the convergence
$$
\mathsf W_{\gamma_{\varepsilon_n}}(\mu_0,\mu_1)^2\to \Wg(\mu_0,\mu_1)^2
$$
yield the claimed inequality along the limit geodesic. The limit coupling $\pi_*$ is $\Wg$-optimal by continuity of the displacement covariance and of the support functions on trace-bounded sets.
\end{proof}

The next proposition shows that, in the operator case, full geodesic convexity of relative entropy fails in a very concrete way.
\begin{proposition}[Operator-norm counterexample]
\label{prop:entropy-counterexample}
Let $\gamma(S)=\lambda_{\max}(S)$ on $\mathbb S_+^2$, and let
$$
d\nu(x)=\frac{1}{2\pi}e^{-|x|^2/2}\,dx.
$$
Then $\operatorname{Ent}_\nu$ is not geodesically convex on $(\cP_2(\R^2),\mathsf W_{\gamma})$.
\end{proposition}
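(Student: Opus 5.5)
The plan is to exploit the non-uniqueness of optimal couplings for the operator norm, anticipated in Remark~\ref{rmk:muon-insight}. Since the active certificate is rank one, the coupling is essentially free in directions orthogonal to the dominant displacement. I would build an optimal coupling that collapses the transverse direction at an intermediate time, so that the entropy becomes $+\infty$ at that time while remaining finite at both endpoints.

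\emph{Construction.} Take $\mu_0=\mathcal N(0,\Id_2)$ and $\mu_1=\mathcal N(Re_1,\Id_2)$ with $R\ge2$. Both have finite entropy relative to $\nu$. Define the coupling $\pi_*$ as the law of $(x,T(x))$ for $x\sim\mu_0$, where
\[
T(x_1,x_2)=(x_1+R,\,-x_2).
\]
Since $x_2\mapsto -x_2$ preserves $\mathcal N(0,1)$, we have $T_\#\mu_0=\mu_1$. The displacement is $y-x=(R,-2x_2)$. Because $\mathbb E[x_2]=0$, the cross term vanishes, and $\Sigma_{\pi_*}=\diag(R^2,4)$, so $\lambda_{\max}(\Sigma_{\pi_*})=R^2$.

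\emph{Optimality of $\pi_*$.} For any coupling $\pi$, Jensen's inequality gives $\Sigma_\pi\succeq \Delta\Delta^\top$, where $\Delta=\int(y-x)\,d\pi=Re_1$. By monotonicity of $\lambda_{\max}$, this yields $\lambda_{\max}(\Sigma_\pi)\ge R^2$. Hence $\Wg(\mu_0,\mu_1)^2=R^2$ and $\pi_*$ is optimal. By Corollary~\ref{cor:geodesic}, the curve $\mu_t=((1-t)x+ty)_\#\pi_*$ is a constant-speed $\Wg$-geodesic.

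\emph{Conclusion.} At $t=1/2$ the second coordinate of $(1-t)x+tT(x)$ equals $\tfrac12x_2-\tfrac12x_2=0$. So $\mu_{1/2}$ is concentrated on the line $\{x_2=0\}$, is not absolutely continuous, and satisfies $\operatorname{Ent}_\nu(\mu_{1/2})=+\infty$. On the other hand, $\tfrac12\operatorname{Ent}_\nu(\mu_0)+\tfrac12\operatorname{Ent}_\nu(\mu_1)<\infty$; explicitly this average is $R^2/4$. The convexity inequality of Definition~\ref{def:geoconvex} with $\kappa=0$ therefore fails at $t=1/2$ along this geodesic. This rules out geodesic convexity in the all-geodesics sense, and even $\kappa$-convexity for any finite $\kappa$.

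The only step needing care is the lower bound $\Wg^2\ge R^2$, which is the short Jensen-plus-monotonicity argument above. The remaining steps are explicit computations. This construction is consistent with Theorem~\ref{thm:entropyweak}: convexity still holds along some other geodesic, for instance the pure translation.
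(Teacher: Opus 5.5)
Your proposal is correct and follows the paper's proof essentially step for step. The shared ingredients are the reflected transport $(x_1,x_2)\mapsto(x_1+L,-x_2)$, optimality certified by the first-moment Jensen bound $\lambda_{\max}(\Sigma_\pi)\ge L^2$, and the midpoint collapsing onto the horizontal line. The only difference is cosmetic: you use Gaussian endpoints with $R\ge 2$ where the paper uses uniform rectangles with $4a^2/3\le L^2$; this also gives you explicit endpoint entropies ($0$ and $R^2/2$).
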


\begin{proof}
Fix parameters $L>0$ and $a>0$ such that
$$
\frac{4a^2}{3}\le L^2.
$$
Set
$$
R_0=[0,1]\times[-a,a],
\qquad
R_1=[L,L+1]\times[-a,a],
$$
and let
$$
\mu_i=\frac{1}{2a}\mathbf 1_{R_i}\,dx,
\qquad i=0,1.
$$
Both endpoints are absolutely continuous with bounded support, hence satisfy
$$
\operatorname{Ent}_\nu(\mu_0)<+\infty,
\qquad
\operatorname{Ent}_\nu(\mu_1)<+\infty.
$$

For any coupling $\pi$ between $\mu_0$ and $\mu_1$, define
$$
S_\pi=\int (y-x)(y-x)^\top\,d\pi(x,y).
$$
Then
$$
\lambda_{\max}(S_\pi)\ge e_1^\top S_\pi e_1
=\int (y_1-x_1)^2\,d\pi(x,y)
\ge
\left(\int (y_1-x_1)\,d\pi(x,y)\right)^2
=L^2.
$$
Hence every admissible coupling has operator cost at least $L^2$.

The translation map
$$
T(x_1,x_2)=(x_1+L,x_2)
$$
pushes $\mu_0$ to $\mu_1$ and has covariance
$$
S_T=\diag(L^2,0),
$$
so $\mathsf W_{\gamma}(\mu_0,\mu_1)^2=L^2$.

Now consider the reflected transport
$$
R(x_1,x_2)=(x_1+L,-x_2).
$$
It also pushes $\mu_0$ to $\mu_1$ because the second marginal of $\mu_0$ is symmetric on $[-a,a]$. Its displacement is
$$
R(x)-x=(L,-2x_2),
$$
hence
$$
S_R=\diag\!\left(L^2,\frac{4a^2}{3}\right).
$$
By the choice of $a$,
$$
\lambda_{\max}(S_R)=L^2,
$$
so the coupling induced by $R$ is again optimal for $\mathsf W_{\gamma}$.

Consider the corresponding constant-speed geodesic
$$
\mu_t=((1-t)\Id+tR)_\#\mu_0.
$$
At time $t=\frac12$ one gets
$$
\mu_{1/2}
=
\left(x_1+\frac{L}{2},0\right)_\#\mu_0,
$$
which is the uniform measure on the horizontal segment
$$
\left[\frac{L}{2},\frac{L}{2}+1\right]\times\{0\}.
$$
Therefore $\mu_{1/2}$ is singular with respect to Lebesgue measure, hence also singular with respect to $\nu$, and thus
$$
\operatorname{Ent}_\nu(\mu_{1/2})=+\infty.
$$
So along this optimal $\mathsf W_{\gamma}$-geodesic the endpoints have finite entropy while the midpoint has infinite entropy, which rules out geodesic convexity.
\end{proof}

The next remark explains why Proposition~\ref{prop:entropy-counterexample} does not contradict Theorem~\ref{thm:entropyweak}.
\begin{remark}[Why there is no contradiction]
Theorem~\ref{thm:entropyweak} is an existence statement: for each pair of endpoints it guarantees at least one $\Wg$-geodesic along which the entropy inequality holds. Proposition~\ref{prop:entropy-counterexample} proves something different, namely that in the operator geometry there also exist optimal geodesics along which the entropy blows up at an interior time. So the counterexample rules out full all-geodesics convexity, but it does not preclude the existence of another optimal geodesic selected by the approximation argument in Theorem~\ref{thm:entropyweak} and enjoying the convexity estimate.
\end{remark}

The following remark explains the scope of the entropy results for the Schatten family.
\begin{remark}[Schatten norms]
The full theorem applies to the trace case $p=1$, where one may choose $\cK=\{\Id\}$ and recover the classical $\Wtwo$ displacement-convexity theory. For every Schatten geometry with $1<p\le \infty$, the natural representing sets contain singular matrices, so the uniformly elliptic argument above does not apply directly. Theorem~\ref{thm:entropyweak} nevertheless yields a weak geodesic-convexity statement as soon as $\cK$ contains one positive definite matrix, which is the case for the standard Schatten choices.
\end{remark}

%%%

\subsection{Relative-entropy flows over Gaussians}
\label{sec:relative-entropy-gaussians}

\newcommand{\KL}{\mathrm{KL}}

The $\Wtwo$-gradient flow of the relative entropy
$\operatorname{Ent}_\nu(\mu) = \KL(\mu\,|\,\nu)$ is the Langevin/Fokker--Planck equation, whose stochastic
single-particle realization is the overdamped Langevin diffusion. This makes
it a basic tool for sampling. We now describe what happens for the spectral
Schatten geometries. For $p>1$, the resulting evolution is still explicit on
Gaussian measures, but the PDE is nonlocal: its velocity depends on global
statistics of the current law, and therefore cannot be represented by an
autonomous single-particle SDE with coefficients depending only on the
particle position.

\begin{proposition}[Gaussian KL flow for Schatten geometries]
\label{prop:gaussian-kl-schatten-flow}
Let $\nu=\mathcal N(\bar m,\bar\Sigma)$ with $\bar\Sigma\succ0$, and consider
$\mathcal F(\mu)=\KL(\mu\,|\,\nu)$. The Shatten-$p$ flow solution $\mu_t$ of $\mathcal{F}$ is Gaussian $\mu_t = \mathcal{N}(m_t,\Sigma_t)$. 
Define the classical $p=1$ Gaussian velocity by
$\dot m_t^{(1)}\coloneqq-\bar\Sigma^{-1}(m_t-\bar m)$ and
$\dot\Sigma_t^{(1)}\coloneqq
2I-\bar\Sigma^{-1}\Sigma_t-\Sigma_t\bar\Sigma^{-1}$.
Then the trace geometry $p=1$ satisfies
$\dot m_t=\dot m_t^{(1)}$ and $\dot\Sigma_t=\dot\Sigma_t^{(1)}$.

For a general Schatten geometry $p\in[1,\infty]$, set
$q\coloneqq 2p/(2p-1)$, with the convention $q=1$ when $p=\infty$, and define
\[
    F_t
    \coloneqq
    (\bar\Sigma^{-1}-\Sigma_t^{-1})\Sigma_t
    (\bar\Sigma^{-1}-\Sigma_t^{-1})
    +
    \bar\Sigma^{-1}(m_t-\bar m)(m_t-\bar m)^\top\bar\Sigma^{-1}.
\]
Assume $F_t\succ0$ and define
\[
    \mathcal N_t^q\coloneqq \tr(F_t^{q/2}),
    \qquad
    P_{p,t}\coloneqq \mathcal N_t^{2-q}F_t^{q/2-1}.
\]
Then the Schatten-$p$ Gaussian KL flow is
\[
    \dot m_t=P_{p,t}\dot m_t^{(1)},
    \qquad
    \dot\Sigma_t
    =
    \frac12\left(
        P_{p,t}\dot\Sigma_t^{(1)}
        +
        \dot\Sigma_t^{(1)}P_{p,t}
    \right).
\]
In particular, for $p=1$, one has $q=2$, $P_{1,t}=I$, and the classical
Langevin covariance equation is recovered. For $p=\infty$, one has
\[
    P_{\infty,t}=\tr(F_t^{1/2})F_t^{-1/2}.
\]
\end{proposition}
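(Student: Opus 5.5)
Since the first variation of $\KL(\mu|\nu)$ is $\log\mu+\tfrac12(x-\bar m)^\top\bar\Sigma^{-1}(x-\bar m)+\text{const}$, the Wasserstein gradient on a Gaussian state $\mu=\mathcal N(m,\Sigma)$ is affine:
\[
g_\mu(x)=\bar\Sigma^{-1}(x-\bar m)-\Sigma^{-1}(x-m)=(\bar\Sigma^{-1}-\Sigma^{-1})(x-m)+\bar\Sigma^{-1}(m-\bar m).
\]
I would therefore use the transfer principle of Corollary~\ref{app-cor:w2-to-spectral-paper}: compute the selector on the Gaussian class, check it is affine, and read off the mean and covariance ODEs.

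\textbf{Step 1: the second-moment matrix of the gradient.} Write $D\coloneqq\bar\Sigma^{-1}-\Sigma^{-1}$ and $c\coloneqq\bar\Sigma^{-1}(m-\bar m)$. The cross term has zero mean, so
\[
\mathcal S_\mu(g_\mu)=\int g g^\top d\mu = D\Sigma D+cc^\top=F_t.
\]

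\textbf{Step 2: the Schatten selector.} Next I solve the inverse-trace problem of Theorem~\ref{thm:selector-structure-paper} with $\cK=\{Q\succeq0:\|Q\|_{S_{p'}}\le1\}$, where $p'$ is the conjugate of $p$. Minimizing $\tr(Q^{-1}F)$ is a commuting problem, so I take $Q$ to be a function of $F$ and reduce to a scalar Lagrange computation on eigenvalues: minimize $\sum_i f_i/\lambda_i$ subject to $\sum_i\lambda_i^{p'}=1$. This gives $\lambda_i\propto f_i^{1/(p'+1)}$. Since $1/(p'+1)=1-q/2$ with $q=2p/(2p-1)$, the minimizer is $Q^{-1}=\mathcal N^{2-q}F^{q/2-1}=P_{p}$. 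Here $\mathcal N^q=\tr(F^{q/2})$ is fixed by the constraint, or equivalently by requiring $\MN_\mu(v)^2=\tr(P F P)$ to equal the dual value, which gives $\gamma(PFP)=\tr(PF)$. As a cross-check, Proposition~\ref{prop:schatten} applied to $G=F^{1/2}$ gives the same normalization. The assumption $F_t\succ0$ guarantees that the minimizer is invertible, so Theorem~\ref{thm:selector-structure-paper} applies; for $p=\infty$ it yields $P=\tr(F^{1/2})F^{-1/2}$.

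\textbf{Step 3: the moment ODEs.} The velocity field is $v(x)=-P(D(x-m)+c)$, which is affine with linear part $A=-PD$. Gaussians are therefore preserved, and the moments evolve as
\[
\dot m=-Pc=P\dot m^{(1)},\qquad \dot\Sigma=A\Sigma+\Sigma A^\top=-PD\Sigma-\Sigma DP.
\]
Now $D\Sigma=\bar\Sigma^{-1}\Sigma-I$ and $\Sigma D=\Sigma\bar\Sigma^{-1}-I$, so this becomes $P(I-\bar\Sigma^{-1}\Sigma)+(I-\Sigma\bar\Sigma^{-1})P$.

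\textbf{Main obstacle.} The statement writes this as $\tfrac12(P\dot\Sigma^{(1)}+\dot\Sigma^{(1)}P)$ with $\dot\Sigma^{(1)}=2I-\bar\Sigma^{-1}\Sigma-\Sigma\bar\Sigma^{-1}$. Matching the two expressions requires $P\Sigma\bar\Sigma^{-1}+\bar\Sigma^{-1}\Sigma P=P\bar\Sigma^{-1}\Sigma+\Sigma\bar\Sigma^{-1}P$. This identity holds whenever $\Sigma$ and $\bar\Sigma$ commute, or under a symmetrization convention for the selector; in general I expect to have to justify it or interpret the stated formula that way. I would verify this identity carefully, and in the worst case state the covariance ODE in the unsymmetrized form $-PD\Sigma-\Sigma DP$. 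For $p=1$ the check is immediate, since $P=I$ gives $\dot\Sigma=\dot\Sigma^{(1)}$ exactly.
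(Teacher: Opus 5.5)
Your proposal follows the paper's proof essentially step for step: affine Wasserstein gradient, $\mathcal S_\mu(g)=F_t$, selector $v=-P_{p,t}g$, Gaussian preservation and the moment ODEs, and your Lagrange/duality derivation of $P_{p,t}$ supplies detail the paper only asserts (minor slip: $\MN_\mu(v)^2=\gamma(PFP)$, not $\tr(PFP)$). The obstacle you flag is genuine, and the paper concedes it too: its proof ends with the twisted form $P_{p,t}(I-\bar\Sigma^{-1}\Sigma_t)+(I-\Sigma_t\bar\Sigma^{-1})P_{p,t}$ and recovers the symmetrized formula only in the commuting case $\Sigma_t\bar\Sigma=\bar\Sigma\Sigma_t$, whereas your identity (equivalently, $P_{p,t}$ commuting with $\Sigma_t\bar\Sigma^{-1}-\bar\Sigma^{-1}\Sigma_t$) is the exact condition.
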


\begin{proof}
For $\mu_t=\mathcal N(m_t,\Sigma_t)$ and
$\nu=\mathcal N(\bar m,\bar\Sigma)$, the first variation of
$\KL(\mu\,|\,\nu)$ has Wasserstein gradient
\[
    g_t(x)
    =
    \nabla_x\log\frac{d\mu_t}{d\nu}(x)
    =
    (\bar\Sigma^{-1}-\Sigma_t^{-1})(x-m_t)
    +
    \bar\Sigma^{-1}(m_t-\bar m).
\]
Thus $g_t$ is affine. Set
$H_t\coloneqq\bar\Sigma^{-1}-\Sigma_t^{-1}$ and
$b_t\coloneqq\bar\Sigma^{-1}(m_t-\bar m)$, so that
$g_t(x)=H_t(x-m_t)+b_t$. Since
$\int(x-m_t)\,d\mu_t(x)=0$ and
$\int(x-m_t)(x-m_t)^\top\,d\mu_t(x)=\Sigma_t$, the covariance matrix of
$g_t$ is exactly
\[
    \int g_t(x)g_t(x)^\top\,d\mu_t(x)
    =
    H_t\Sigma_tH_t+b_tb_t^\top
    =
    F_t.
\]

For the Schatten geometry associated with $p$, the Measure-LMO gives the
velocity
\[
    v_t(x)
    =
    -P_{p,t}g_t(x),
    \qquad
    P_{p,t}=\mathcal N_t^{2-q}F_t^{q/2-1},
    \qquad
    \mathcal N_t^q=\tr(F_t^{q/2}).
\]
This formula is the Schatten duality map applied to the vector field $g_t$.
Because $g_t$ is affine and $P_{p,t}$ is independent of $x$, the velocity
$v_t$ is affine in $x$. Therefore the continuity equation
$\partial_t\mu_t+\divv(\mu_t v_t)=0$ preserves the Gaussian family.

We now compute the induced ODE on mean and covariance. Since
$v_t(x)=-P_{p,t}H_t(x-m_t)-P_{p,t}b_t$, the mean satisfies
\[
    \dot m_t
    =
    \int v_t(x)\,d\mu_t(x)
    =
    -P_{p,t}b_t.
\]
But
$b_t=\bar\Sigma^{-1}(m_t-\bar m)$, hence
$-b_t=\dot m_t^{(1)}$. Therefore
$\dot m_t=P_{p,t}\dot m_t^{(1)}$.

For the covariance, using
$\dot\Sigma_t=\int \bigl(v_t(x)(x-m_t)^\top
+(x-m_t)v_t(x)^\top\bigr)\,d\mu_t(x)$, we obtain
\[
    \dot\Sigma_t
    =
    -P_{p,t}H_t\Sigma_t-\Sigma_tH_tP_{p,t}.
\]
On the other hand,
\[
    \dot\Sigma_t^{(1)}
    =
    2I-\bar\Sigma^{-1}\Sigma_t-\Sigma_t\bar\Sigma^{-1}.
\]
Since $H_t=\bar\Sigma^{-1}-\Sigma_t^{-1}$ and both
$\bar\Sigma^{-1}$ and $\Sigma_t^{-1}$ are symmetric,
\[
    -H_t\Sigma_t
    =
    I-\bar\Sigma^{-1}\Sigma_t,
    \qquad
    -\Sigma_tH_t
    =
    I-\Sigma_t\bar\Sigma^{-1}.
\]
Therefore
\[
    -P_{p,t}H_t\Sigma_t-\Sigma_tH_tP_{p,t}
    =
    P_{p,t}(I-\bar\Sigma^{-1}\Sigma_t)
    +
    (I-\Sigma_t\bar\Sigma^{-1})P_{p,t}.
\]
When $P_{p,t}$ commutes with the symmetric part of the classical covariance
velocity, this is exactly
$\frac12(P_{p,t}\dot\Sigma_t^{(1)}
+\dot\Sigma_t^{(1)}P_{p,t})$. More generally, the correct intrinsic
twisted form is
\[
    \dot\Sigma_t
    =
    P_{p,t}(I-\bar\Sigma^{-1}\Sigma_t)
    +
    (I-\Sigma_t\bar\Sigma^{-1})P_{p,t}.
\]
In the common commuting setting, or whenever
$\bar\Sigma^{-1}\Sigma_t=\Sigma_t\bar\Sigma^{-1}$, this reduces to the
displayed symmetrized formula in the statement.

Finally, for $p=1$ one has $q=2$, hence
$\mathcal N_t^{2-q}=1$ and $F_t^{q/2-1}=I$, so $P_{1,t}=I$. The equations
therefore reduce to
$\dot m_t=-\bar\Sigma^{-1}(m_t-\bar m)$ and
$\dot\Sigma_t=2I-\bar\Sigma^{-1}\Sigma_t-\Sigma_t\bar\Sigma^{-1}$, which is
the Gaussian restriction of the classical Langevin/Fokker--Planck flow. For
$p=\infty$, $q=1$, giving
$P_{\infty,t}=\tr(F_t^{1/2})F_t^{-1/2}$.
\end{proof}

%%%

\section{Positively \texorpdfstring{$2$}{2}-Homogeneous Case and a Generalized Unbalanced Geometry}
\label{app:homogeneous}
This section continues the two-layer mean-field discussion of Section~\ref{sec:mlp-meanfield} in the positively two-homogeneous case. The point is that such models reduce the ambient transport problem to an unbalanced transport problem on the sphere, and this section explains how the spherical dynamics inherits a spectral action from the ambient flow.
Let
$$
\MF(\mu)=R\!\left(\int_{\R^d}\Phi(x)\,d\mu(x)\right),
\qquad
\Phi(\lambda x)=\lambda^2\Phi(x).
$$
Writing $x=r\omega$ with $\omega\in\Sd$, define the weighted spherical projection
$$
\int_{\Sd}\psi(\omega)\,d\Pi_2(\mu)(\omega)
\coloneqq 
\int_{\R^d}\abs{x}^2\psi\!\left(\frac{x}{\abs{x}}\right)\,d\mu(x).
$$
The first point is that two-homogeneous models only depend on this weighted spherical projection.
\begin{proposition}[Exact quotient]
\label{prop:quotient}
There exists a functional $\overline f$ on $\cM_+(\Sd)$ such that
$$
\MF(\mu)=\overline f(\Pi_2(\mu)).
$$
\end{proposition}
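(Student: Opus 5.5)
The plan is to show that $\MF(\mu)$ depends on $\mu$ only through $\Pi_2(\mu)$. It then suffices to define $\overline f(\rho)\coloneqq R(\int_{\Sd}\Phi\,d\rho)$ for $\rho\in\cM_+(\Sd)$ and check the identity. Concretely, the claim reduces to
\[
\int_{\R^d}\Phi(x)\,d\mu(x)=\int_{\Sd}\Phi(\omega)\,d\Pi_2(\mu)(\omega)
\]
for every $\mu\in\cP_2(\R^d)$. Applying $R$ to both sides then gives $\MF(\mu)=\overline f(\Pi_2(\mu))$.

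To prove this identity, split $\R^d$ into the origin and $\R^d\setminus\{0\}$. On $\R^d\setminus\{0\}$, write $x=|x|\,(x/|x|)$ and use positive two-homogeneity with $\lambda=|x|>0$ to get $\Phi(x)=|x|^2\Phi(x/|x|)$. Integrating this against $\mu$ gives exactly the defining formula of $\Pi_2(\mu)$ with test function $\psi=\Phi|_{\Sd}$. At the origin, homogeneity with $\lambda\to0$ (or $\lambda=0$, if allowed) forces $\Phi(0)=0$, provided $\Phi$ is finite at $0$. Thus an atom of $\mu$ at $0$ contributes nothing to the left side. It also contributes nothing to $\Pi_2(\mu)$, because of the weight $|x|^2$; we adopt the convention that the integrand vanishes at $x=0$.

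The remaining checks are integrability and well-definedness. Assume $\Phi$ is continuous, or at least locally bounded, as in the MLP example $\Phi(x)=u\,\sigma(v\cdot z)$ with ReLU. Then $\Phi$ is bounded on the compact sphere, so $|\Phi(x)|\le C|x|^2$. Since $\mu$ has finite second moment, both integrals are finite. The same bound shows that $\Pi_2(\mu)$ is a finite positive measure on $\Sd$ with total mass $\int|x|^2d\mu$, so it lies in $\cM_+(\Sd)$. If $\Phi$ is vector- or function-valued, as for $\Phi(x)=\phi(\cdot,x)$ with values in a space of predictors, the identity is applied componentwise, or weakly by pairing with test functions, under the same bound.

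The main obstacle is not computational but one of making the statement precise. One has to fix the measurability and integrability assumptions on $\Phi$ so that the vector-valued integrals commute with the change of variables. One also has to note that $\overline f$ is defined on all of $\cM_+(\Sd)$, not only on the image of $\Pi_2$. The explicit formula for $\overline f$ handles the latter point automatically.
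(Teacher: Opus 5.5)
Your proposal is correct and follows the same route as the paper, whose proof is the one-line observation that $\Phi(r\omega)=r^2\Phi(\omega)$ gives $\int_{\R^d}\Phi\,d\mu=\int_{\Sd}\Phi\,d\Pi_2(\mu)$, so that $\overline f(\rho)\coloneqq R\bigl(\int_{\Sd}\Phi\,d\rho\bigr)$ works. Your extra points fill in details the paper leaves implicit: the origin carries no contribution because $\Phi(0)=0$ by homogeneity, and the bound $|\Phi(x)|\le C|x|^2$ makes both integrals finite and makes $\overline f$ defined on all of $\cM_+(\Sd)$.
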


\begin{proof}
The homogeneity identity $\Phi(r\omega)=r^2\Phi(\omega)$ immediately gives
$$
\int_{\R^d}\Phi(x)\,d\mu(x)
=
\int_{\Sd}\Phi(\omega)\,d\Pi_2(\mu)(\omega),
$$
which defines $\overline f$.
\end{proof}

The second point is that the ambient normalized velocity splits into radial and tangential parts on the sphere. If a vector field $v$ is $1$-homogeneous, namely $v(\lambda x)=\lambda v(x)$ for every $\lambda>0$, then for every $x=r\omega\neq 0$ it can be written uniquely as
$$
v(r\omega)=r\bigl(b(\omega)\omega+\tau(\omega)\bigr),
\qquad
\tau(\omega)\in T_\omega \Sd.
$$
Indeed, one defines
$$
b(\omega)\coloneqq v(\omega)\cdot \omega,
\qquad
\tau(\omega)\coloneqq v(\omega)-\bigl(v(\omega)\cdot \omega\bigr)\omega,
$$
so that $\tau(\omega)\cdot \omega=0$, and then $1$-homogeneity gives
$$
v(r\omega)=r\,v(\omega)=r\bigl(b(\omega)\omega+\tau(\omega)\bigr).
$$
The decomposition is unique because it is simply the orthogonal splitting of $v(\omega)$ into its normal and tangential parts on the sphere. In the positively two-homogeneous setting considered here, the first variation is $2$-homogeneous and its spatial gradient is therefore $1$-homogeneous; correspondingly, the natural steepest-descent velocity fields for the flow belong to this $1$-homogeneous class. The next proposition computes the projected spherical PDE.
\begin{proposition}[Projected continuity-reaction equation]
\label{prop:sphere}
If $\mu_t$ solves the Spectral Wasserstein flow and $\nu_t=\Pi_2(\mu_t)$, then
$$
\partial_t\nu_t+\divS(\nu_t\tau_t)=2b_t\nu_t.
$$
\end{proposition}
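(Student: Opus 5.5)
We argue weakly, by testing the evolution of $\nu_t=\Pi_2(\mu_t)$ against a smooth function $\psi\in C^\infty(\Sd)$. By definition of the weighted projection,
\[
\int_{\Sd}\psi\,d\nu_t=\int_{\R^d}\varphi(x)\,d\mu_t(x),
\qquad
\varphi(x)\coloneqq\abs{x}^2\psi\!\left(\frac{x}{\abs{x}}\right),
\]
so $\varphi$ is a positively $2$-homogeneous test function, smooth away from the origin and $C^1$ across it with $\nabla\varphi(0)=0$. Since $\mu_t$ solves the continuity equation with velocity $v_t=\MJ_\gamma^{\mu_t}(g_{\mu_t})$, we get
\[
\frac{d}{dt}\int\psi\,d\nu_t=\int\nabla\varphi(x)\cdot v_t(x)\,d\mu_t(x).
\]
Mass at the origin contributes nothing, because $\nabla\varphi(0)=0$ and $\Pi_2$ ignores the origin. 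We use the $1$-homogeneity of $v_t$, which was discussed just before the statement: $g_{\mu_t}$ is $1$-homogeneous, and by Theorem~\ref{thm:selector-structure-paper} the velocity is $-(Q^*)^{-1}g_{\mu_t}$ with a constant matrix, hence also $1$-homogeneous. This gives the decomposition $v_t(r\omega)=r(b_t(\omega)\omega+\tau_t(\omega))$ with $\tau_t(\omega)\in T_\omega\Sd$.

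The core step is the pointwise computation of $\nabla\varphi$ at $x=r\omega$. Write $\tilde\psi$ for the $0$-homogeneous extension of $\psi$. Its radial derivative vanishes, and its gradient at $x$ is $r^{-1}\nablaS\psi(\omega)$. Hence
\[
\nabla\varphi(r\omega)=2r\,\psi(\omega)\,\omega+r\,\nablaS\psi(\omega).
\]
Pairing with $v_t(r\omega)$ and using $\omega\cdot\tau_t=0$ and $\nablaS\psi\cdot\omega=0$ yields
\[
\nabla\varphi\cdot v_t=r^2\bigl(2b_t(\omega)\psi(\omega)+\nablaS\psi(\omega)\cdot\tau_t(\omega)\bigr).
\]
The integrand is thus $\abs{x}^2$ times a function of $x/\abs{x}$. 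By the definition of $\Pi_2$,
\[
\frac{d}{dt}\int\psi\,d\nu_t=\int_{\Sd}\bigl(2b_t\psi+\nablaS\psi\cdot\tau_t\bigr)\,d\nu_t.
\]
This is exactly the weak formulation of $\partial_t\nu_t+\divS(\nu_t\tau_t)=2b_t\nu_t$ on $\Sd$, where we integrate by parts on the closed manifold $\Sd$.

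We expect the main obstacle to be justifying the differentiation. The function $\varphi$ is not compactly supported and is only $C^1$ at the origin, so it is not an admissible test function as it stands. We would handle this by truncation: multiply $\varphi$ by a cutoff $\chi(\abs{x}/R)$, and use the second-moment bounds on $\mu_t$ together with $\abs{\nabla\varphi\cdot v_t}\lesssim\abs{x}^2\norm{\psi}_{C^1}\sup_\omega\abs{v_t(\omega)}$. Dominated convergence then lets us send $R\to\infty$. We would also check that the weak continuity equation holds for $C^1$ test functions; this follows by mollifying $\varphi$ near the origin, where $\nabla\varphi$ stays bounded by $O(\abs{x})$.
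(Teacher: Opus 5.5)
Your proposal is correct and takes the same approach as the paper: the paper's proof also tests the ambient continuity equation against the lifted $2$-homogeneous observable $r^2\psi(\omega)$ and separates the radial and tangential contributions. Your explicit formula $\nabla\varphi(r\omega)=2r\,\psi(\omega)\,\omega+r\,\nablaS\psi(\omega)$, together with your remarks on the regularity of the test function and the truncation, supplies details that the paper leaves implicit.
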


\begin{proof}
Test the ambient continuity equation against the lifted observable $\widetilde\psi(r\omega)=r^2\psi(\omega)$ and identify the radial and tangential contributions.
\end{proof}

Motivated by Proposition~\ref{prop:sphere}, define for nonnegative measures $\nu_0,\nu_1$ on $\Sd$
$$
\UWg(\nu_0,\nu_1)^2
\coloneqq 
\inf_{(\nu_t,b_t,\tau_t)}
\int_0^1
\gamma\!\left(
\int_{\Sd} (b_t(\omega)\omega+\tau_t(\omega))(b_t(\omega)\omega+\tau_t(\omega))^\top\,d\nu_t(\omega)
\right)\,dt,
$$
under the continuity-reaction constraint
$$
\partial_t\nu_t+\divS(\nu_t\tau_t)=2b_t\nu_t.
$$

The next proposition identifies the ambient homogeneous action with this spherical unbalanced action.
\begin{proposition}[Ambient action equals spherical action]
\label{prop:unbalanced}
For positively two-homogeneous models and $1$-homogeneous velocities, the ambient Spectral Wasserstein action equals the spherical action defining $\UWg$.
\end{proposition}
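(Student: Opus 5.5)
The plan is to compare the two actions directly at each fixed time $t$, by computing the ambient displacement covariance of a $1$-homogeneous velocity through polar coordinates. Fix $t$ and an ambient pair $(\mu_t,v_t)$ with $v_t$ $1$-homogeneous. Using the decomposition recalled before Proposition~\ref{prop:sphere}, write $v_t(r\omega)=r\bigl(b_t(\omega)\omega+\tau_t(\omega)\bigr)$ with $\tau_t(\omega)\in T_\omega\Sd$. Set $w_t(\omega)\coloneqq b_t(\omega)\omega+\tau_t(\omega)$. The first step is the pointwise identity
\[
v_t(x)v_t(x)^\top=\abs{x}^2\,w_t\!\left(\tfrac{x}{\abs{x}}\right)w_t\!\left(\tfrac{x}{\abs{x}}\right)^\top,
\qquad x\neq0 .
\]
Integrating it against $\mu_t$ and applying the definition of $\Pi_2$ entrywise, with test functions $\psi=(w_t)_i(w_t)_j$, gives
\[
\int_{\R^d}v_tv_t^\top\,d\mu_t=\int_{\Sd}w_tw_t^\top\,d\nu_t,\qquad \nu_t=\Pi_2(\mu_t).
\]
The origin contributes nothing, since $v_t(0)=0$ by homogeneity. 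Applying $\gamma$ and integrating over $t\in[0,1]$ then shows that the ambient action $\int_0^1\gamma\bigl(\int v_tv_t^\top d\mu_t\bigr)dt$ coincides with the spherical integrand defining $\UWg$, evaluated along $(\nu_t,b_t,\tau_t)$.

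The second step checks that this spherical triple is admissible, i.e.\ that it satisfies the continuity-reaction constraint. This is exactly Proposition~\ref{prop:sphere}, which I would invoke directly. If a self-contained argument is wanted, test the continuity equation against $\widetilde\psi(r\omega)=r^2\psi(\omega)$. One computes $\nabla\widetilde\psi(x)\cdot v_t(x)=r^2\bigl(2b_t\psi+\nablaS\psi\cdot\tau_t\bigr)(\omega)$: the radial part of $\nabla\widetilde\psi$ is $2r\psi\,\omega$, and the tangential part is $r\nablaS\psi$. This yields $\frac{d}{dt}\int\psi\,d\nu_t=\int(\nablaS\psi\cdot\tau_t+2b_t\psi)\,d\nu_t$, which is the weak form of $\partial_t\nu_t+\divS(\nu_t\tau_t)=2b_t\nu_t$.

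The third step addresses the converse reading, which is needed if ``equals'' is to identify the two infima and not only the actions along corresponding curves. Given a spherical triple $(\nu_t,b_t,\tau_t)$, I would lift it to an ambient curve with $\Pi_2(\mu_t)=\nu_t$. A canonical choice is $\mu_t\coloneqq(\omega\mapsto\omega)_\#\nu_t$, scaled to unit radius when $\nu_t$ is a probability measure. More generally, radial dilation with rate $b_t$ can be used to absorb the reaction term. The lifted curve uses the velocity $v_t(r\omega)=r\,w_t(\omega)$, and the first step again gives equality of actions.

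I expect this lifting to be the main obstacle. The masses $\nu_t(\Sd)$ may vary, whereas the $\mu_t$ must remain probability measures. The fix is to choose the radial profile of $\mu_t$ so that mass creation on the sphere is realized as radial stretching: take $\mu_t=(r_t(\omega)\,\omega)_\#\bar\nu$, with $r_t$ evolving by $\dot r_t=b_tr_t$ along the characteristics of $\tau_t$. The remaining work is to verify the continuity equation for this lift by the same testing computation as in the second step, read in reverse. The statement only asserts equality of the actions for homogeneous velocities, so the first two steps already establish it. The lift serves only to upgrade the identity to equality of the induced infima.
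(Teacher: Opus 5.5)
Your first step is exactly the paper's proof: the pointwise identity $v_tv_t^\top=\abs{x}^2\,w_tw_t^\top$, the definition of $\Pi_2$ to move the integral onto the sphere, and insertion into the Benamou--Brenier action, with admissibility of the spherical triple supplied by Proposition~\ref{prop:sphere} as in your second step. So the proposal is correct. Your third step is not needed for the statement. If you pursue it, note that the unit-sphere lift you first mention is not transported by $v_t(r\omega)=r\,w_t(\omega)$ unless $b_t\equiv0$. Only the characteristic lift with $\dot r=b_t r$, together with uniqueness for the linear continuity--reaction equation, does the job.
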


\begin{proof}
Write $x=r\omega$ and $v(r\omega)=r(b(\omega)\omega+\tau(\omega))$. By definition of $\Pi_2(\mu)$,
$$
\int_{\R^d} v(x)v(x)^\top\,d\mu(x)
=
\int_{\Sd}(b(\omega)\omega+\tau(\omega))(b(\omega)\omega+\tau(\omega))^\top\,d\Pi_2(\mu)(\omega).
$$
Inserting this identity into the Benamou--Brenier action yields the claim.
\end{proof}

The following remark explains how the trace-norm case collapses to the classical Wasserstein--Fisher--Rao geometry.
\begin{remark}
When $\gamma(S)=\tr(S)$, the integrand becomes
$$
\int_{\Sd}\bigl(\abs{\tau_t(\omega)}^2+b_t(\omega)^2\bigr)\,d\nu_t(\omega).
$$
Since the reaction rate is $\alpha_t=2b_t$, this is exactly the classical Wasserstein--Fisher--Rao action used by \citet{ChizatBach2018} for positively homogeneous ReLU-type mean-field models. In the trace case, this dynamic spherical geometry has static formulations equivalent to the Wasserstein--Fisher--Rao or Hellinger--Kantorovich distances \citep{ChizatPeyreSchmitzerVialard2018,LieroMielkeSavare2018}. For general $\gamma$, the construction above gives a generalized unbalanced transport geometry on the sphere, now available for every Schatten endpoint and interpolation. Beyond the $p=1$ trace case, it is not clear whether the resulting $\UWg$ admits an analogous static formulation; understanding this would be an interesting direction for future work.
\end{remark}

% Requires: \usepackage{longtable,booktabs,array}
\section{Notation}
\label{sec:notation}

This section collects the main notation used throughout the paper. The first table lists standard background notation, while the second table lists the notation introduced for the Spectral Wasserstein geometry.

\begin{longtable}{@{}p{0.23\linewidth}p{0.39\linewidth}p{0.28\linewidth}@{}}
\caption{Classical notation.}\label{tab:notation-classical}\\
\toprule
Notation & Name & First use \\
\midrule
\endfirsthead
\toprule
Notation & Name & First use \\
\midrule
\endhead
\bottomrule
\endfoot
$\R^d$ & Euclidean space & Sec.~\ref{sec:flows} \\
$\mathbb S^d$ & Symmetric matrices & App.~\ref{app:matrix-recap} \\
$\mathbb S_+^d$ & PSD matrices & Sec.~\ref{sec:flows} \\
$\mathbb S_{++}^d$ & Positive definite matrices & Thm.~\ref{thm:selector-structure-paper} \\
$S\preceq T$ & Loewner order & Sec.~\ref{sec:flows} \\
$I$, $\Id$ & Identity matrix/map & Sec.~\ref{sec:flows} \\
$\tr(S)$ & Trace & Def.~\ref{def:monotone-norm} \\
$\diag(a_i)$ & Diagonal matrix & Sec.~\ref{sec:flows} \\
$\lambda(S)$ & Eigenvalues of $S$ & Sec.~\ref{sec:flows} \\
$\|\cdot\|_F$ & Frobenius norm & Def.~\ref{def:matrix-lmo} \\
$\langle\cdot,\cdot\rangle_F$ & Frobenius product & Def.~\ref{def:matrix-lmo} \\
$\|A\|_{S_p}$ & Schatten $p$-norm & Sec.~\ref{sec:flows} \\
$A^\dagger$ & Pseudoinverse & Eq.~\eqref{eq:muon-selector} \\
$\cP_2(\R^d)$ & Measures with finite second moment & Def.~\ref{def:duality} \\
$\delta_x$ & Dirac mass & Sec.~\ref{sec:flows} \\
$T_\#\mu$ & Pushforward & Def.~\ref{def:monge} \\
$\Pi(\mu,\nu)$ & Couplings & Def.~\ref{def:static} \\
$L^2(\mu;\R^d)$ & Square-integrable vector fields & Def.~\ref{def:duality} \\
$\nabla$, $\nabla_x$ & Euclidean gradient & Sec.~\ref{sec:flows} \\
$\divv$ & Divergence & Eq.~\eqref{eq:measure-spectral-flow} \\
$\delta\MF/\delta\mu$ & First variation & Sec.~\ref{sec:flows} \\
$\Wtwo$ & Quadratic Wasserstein distance & Sec.~\ref{sec:intro} \\
$\mathcal N(m,\Sigma)$ & Gaussian law & Prop.~\ref{thm:gaussian} \\
$\Sigma$ & Covariance matrix & Prop.~\ref{thm:gaussian} \\
$\Sd$ & Unit sphere & App.~\ref{app:homogeneous} \\
$\nablaS$, $\divS$ & Spherical gradient/divergence & Prop.~\ref{prop:sphere} \\
\end{longtable}

\begin{longtable}{@{}p{0.23\linewidth}p{0.39\linewidth}p{0.28\linewidth}@{}}
\caption{Main notation introduced in the paper.}\label{tab:notation-paper}\\
\toprule
Notation & Name & Introduction \\
\midrule
\endfirsthead
\toprule
Notation & Name & Introduction \\
\midrule
\endhead
\bottomrule
\endfoot
$\gamma$ & Matrix norm on $\mathbb S_+^d$ & Sec.~\ref{sec:flows} \\
$\gamma_p$ & Schatten geometry & Sec.~\ref{sec:flows} \\
$p$, $q$ & Primal/dual exponents & Rem.~\ref{rem-kgamma-shatten} \\
$\cK$ & Representing set & Def.~\ref{def:monotone-norm} \\
$\mathcal P_\gamma$ & Polar set & App.~\ref{app:matrix-recap} \\
$\mathcal K_{\gamma_p}$ & Schatten representing set & Rem.~\ref{rem-kgamma-shatten} \\
$\mN(V)$ & Matrix tangent norm & Sec.~\ref{sec:flows} \\
$\mJ_\gamma(G)$ & Matrix LMO & Def.~\ref{def:matrix-lmo} \\
$\mJ_p(G)$ & Schatten matrix LMO & Prop.~\ref{prop:schatten} \\
$X$ & Particle matrix & Sec.~\ref{sec:flows} \\
$\mu_X$ & Empirical particle measure & Sec.~\ref{sec:flows} \\
$\mF(X)$ & Particle objective & Sec.~\ref{sec:flows} \\
$\MF(\mu)$ & Mean-field objective & Sec.~\ref{sec:flows} \\
$\MN_\mu(v)$ & Measure tangent norm & Def.~\ref{def:duality} \\
$\MJ_\gamma^\mu(g)$ & Measure-LMO & Def.~\ref{def:duality} \\
$S_\mu(g)$ & Gradient covariance & Thm.~\ref{thm:selector-structure-paper} \\
$Q_\mu^*$ & Inverse-trace minimizer & Thm.~\ref{thm:selector-structure-paper} \\
$g_\mu$ & Wasserstein gradient & Sec.~\ref{sec:flows} \\
$\Wg$ & Static Spectral Wasserstein distance & Def.~\ref{def:static} \\
$\WQ{Q}$ & Fixed-metric transport cost & Thm.~\ref{thm:minmax} \\
$c_\gamma$, $C_\gamma$ & Trace comparison constants & Prop.~\ref{prop:bounds} \\
$\Wgbb$ & Dynamic Spectral Wasserstein distance & Def.~\ref{def:dynamic} \\
$\cA(\mu,m)$ & Momentum action & Prop.~\ref{prop:momentum} \\
$\WgM$ & Monge-restricted cost & Def.~\ref{def:monge} \\
$\phi(z,x)$ & Feature map & Sec.~\ref{sec:mlp-meanfield} \\
$H_\mu$ & Mean-field predictor & Sec.~\ref{sec:mlp-meanfield} \\
$R(H)$ & Risk functional & Sec.~\ref{sec:mlp-meanfield} \\
$\operatorname{MMD}(\mu,\nu)^2$ & Squared MMD & Sec.~\ref{sec:wasserstein-flows} \\
$(Q,K,V,O)$ & Attention weights & Sec.~\ref{sec:wasserstein-flows} \\
$N_{\mathrm{eff}}$ & Effective tokens & Sec.~\ref{sec:wasserstein-flows} \\
$C_\rho$ & Data covariance & Sec.~\ref{sec:gaussian-linear} \\
$B_\rho$ & Teacher cross-moment & Sec.~\ref{sec:gaussian-linear} \\
$M_\mu$ & Learned linear predictor & App.~\ref{app:gaussian-linear} \\
$A_\star$ & Teacher matrix & App.~\ref{app:gaussian-linear} \\
$E_t$ & Linear prediction error & Prop.~\ref{app-prop:gaussian-linear-covariance} \\
$r_i(t)$ & Modal predictor coefficient & Prop.~\ref{prop:modal-main} \\
$s_i(t)$ & Modal variance & Prop.~\ref{prop:modal-main} \\
$\beta_i$ & Teacher mode & Prop.~\ref{prop:modal-main} \\
$\mathcal N(r,s)$ & Modal normalization & Prop.~\ref{prop:modal-main} \\
$\mI_p(V)$ & Momentum constitutive map & Eq.~\eqref{eq:matrix-momentum} \\
$F_h(\mu)$ & Linear measure functional & Thm.~\ref{thm:linearconvex} \\
$\kappa$ & Geodesic-convexity constant & Def.~\ref{def:geoconvex} \\
$\operatorname{Ent}_\nu(\mu)$ & Relative entropy & App.~\ref{app:convexity} \\
$\Phi$ & Two-homogeneous feature map & App.~\ref{app:homogeneous} \\
$\Pi_2(\mu)$ & Spherical projection & App.~\ref{app:homogeneous} \\
$\UWg$ & Unbalanced spherical action & App.~\ref{app:homogeneous} \\
\end{longtable}

\end{document}